\documentclass[reqno]{amsart}


\usepackage[utf8]{inputenc} 
\usepackage{amssymb}
\usepackage{graphicx}
\usepackage{latexsym}
\usepackage{stmaryrd}
\usepackage{enumitem}
\usepackage[bookmarks,hyperfootnotes=false]{hyperref}
\usepackage{multirow}
\usepackage{xcolor}
\usepackage{caption}
\colorlet{darkishRed}{red!80!black}
\colorlet{darkishBlue}{blue!60!black}
\colorlet{darkishGreen}{green!60!black}
\usepackage{hyperref}
\hypersetup{
pdftitle = {Duality theorems for stars and combs},
pdfsubject = {Duality theorems for stars and combs},
pdfauthor = {Carl Bürger and Jan Kurkofka},
pdfkeywords = {},
draft = false,
bookmarksopen=true}

\usepackage{theoremref} 
\usepackage{tikz}
\usepackage{tikz-cd}
\usetikzlibrary{calc,through,intersections,arrows, trees, positioning, decorations.pathmorphing, cd}
\usepackage{relsize}
\usepackage{comment}
\usepackage{xifthen}
\usepackage{mathrsfs}
\usepackage{svg}
\usepackage{mathtools}
\usepackage{nccmath}
\usepackage{pifont}

\newcommand{\cmark}{\ding{51}}%
\newcommand{\xmark}{\ding{55}}%

\newcommand{\SCnumberCite}[1]
{%
\ifthenelse{\equal{#1}{1}}{\cite{StarComb1StarsAndCombs}}{}%
\ifthenelse{\equal{#1}{2}}{\cite{StarComb2TheDominatedComb}}{}%
\ifthenelse{\equal{#1}{3}}{\cite{StarComb3TheUndominatedComb}}{}%
\ifthenelse{\equal{#1}{4}}{\cite{StarComb4TheUndominatingStar}}{}%
}

\newcommand{\SCnumberHand}[2]
{%
\ifthenelse{\equal{#1}{#2}}{\ding{43}\,}{}%
}

\newcommand{\SCintroList}[2]
{%
    \ifthenelse{\equal{#1}{#2}}{(this paper)}{\SCnumberCite{#1}}%
}

\newcommand{\SCintroDetermined}[1]
{%
    \ifthenelse{\equal{#1}{1}}{In this paper, we determine}{In the first paper of this series, we determined}%
}

\newcounter{quotecount}

\makeatletter
\newcommand*{\addFileDependency}[1]{
  \typeout{(#1)}
  \@addtofilelist{#1}
  \IfFileExists{#1}{}{\typeout{No file #1.}}
}
\makeatother


\newcommand{\minG}{\vert G\vert_{\Gamma}}



\newcommand{\Abs}[1]{\partial_{\Omega} {#1}}
\newcommand{\AbsC}[1]{\partial_{\Gamma} {#1}}
\newcommand{\crit}{\normalfont\text{crit}}

\newcommand{\CX}{\breve{\cC}_X}

\newcommand{\CC}[1]{\breve{\cC}_{#1}}

\newcommand{\rsep}[2]{({#1},{#2})}
\newcommand{\lsep}[2]{({#1},{#2})}
\newcommand{\sep}[2]{\{{#1},{#2}\}}

\newcommand{\trsep}[1]{\rsep{#1}{\cK({#1})}}
\newcommand{\tlsep}[1]{\lsep{\cK({#1})}{#1}}
\newcommand{\tsep}[1]{\sep{#1}{\cK({#1})}}

\renewcommand{\subset}{\subseteq}

\def\comp{com\-pac\-ti\-fi\-ca\-tion}

\def\SC{Stone-Čech}

\def\TFAD{Let $G$ be any connected graph and let $U\subset V(G)$ be any vertex set. Then the following assertions are complementary:}

\newcommand{\at}{attached to }


\newcommand{ \N } { \mathbb{N} }


\newcommand{\dblue}[1]{\textcolor{darkishBlue}{#1}}



\makeatletter

\def\calCommandfactory#1{%
   \expandafter\def\csname c#1\endcsname{\mathcal{#1}}}
\def\frakCommandfactory#1{%
   \expandafter\def\csname frak#1\endcsname{\mathfrak{#1}}}

\newcounter{ctr}
\loop
  \stepcounter{ctr}
  \edef\X{\@Alph\c@ctr}
  \expandafter\calCommandfactory\X
  \expandafter\frakCommandfactory\X
  \edef\Y{\@alph\c@ctr}
  \expandafter\frakCommandfactory\Y
\ifnum\thectr<26
\repeat

\renewcommand{\cC}{\mathscr{C}}

\renewcommand{\cK}{\mathscr{K}}

\renewcommand{\cF}{\mathscr{F}}

\setenumerate{label={\normalfont (\roman*)},itemsep=0pt}

\def\lowfwd #1#2#3{{\mathop{\kern0pt #1}\limits^{\kern#2pt\raise.#3ex
\vbox to 0pt{\hbox{$\scriptscriptstyle\rightarrow$}\vss}}}}
\def\lowbkwd #1#2#3{{\mathop{\kern0pt #1}\limits^{\kern#2pt\raise.#3ex
\vbox to 0pt{\hbox{$\scriptscriptstyle\leftarrow$}\vss}}}}
\def\fwd #1#2{{\lowfwd{#1}{#2}{15}}}
\def\Sinf{S_{\aleph_0}}

\def\vS{{\hskip-1pt{\fwd S3}\hskip-1pt}}
\def\vSinf{\vS_{\mkern-.85\thinmuskip\aleph_0}}

\def\vE{{\hskip-1pt{\fwd{E}{3.5}}\hskip-1pt}}
\def\vF{{\hskip-1pt{\fwd{F}{3.5}}\hskip-1pt}}
\def\ve{\kern-1.5pt\lowfwd e{1.5}2\kern-1pt}
\def\ev{\kern-1pt\lowbkwd e{0.5}2\kern-1pt}
\def\vf{\kern-2pt\lowfwd f{2.5}2\kern-1pt}

\def\vT{\lowfwd T{0.3}1}
\def\vs{\lowfwd s{1.5}1}
\def\sv{\lowbkwd s{0}1}

\lineskiplimit=-2pt 


\newtheorem{theorem}{Theorem}[section] 
\newtheorem{corollary}[theorem]{Corollary}
\newtheorem{lemma}[theorem]{Lemma}

\newtheorem{mainresult}{Theorem} 

\newtheorem*{NNtheorem}{Theorem}


\newenvironment{customthm}[1]
  {\innercustomthm}
  {\endinnercustomthm}

\theoremstyle{definition}

\newtheorem{definition}[theorem]{Definition}

\theoremstyle{remark}

\def\principal{principal}
\def\admissable{admissable}


\begin{document}

\title[Duality theorems for stars and combs IV: Undominating stars]{Duality theorems for stars and combs\\
IV: Undominating stars}

\author{Carl Bürger}
\author{Jan Kurkofka}
\address{University of Hamburg, Department of Mathematics, Bundesstraße 55 (Geomatikum), 20146 Hamburg, Germany}
\email{carl.buerger@uni-hamburg.de, jan.kurkofka@uni-hamburg.de}

\keywords{infinite graph; star comb lemma; duality; dual; complementary; undominating star; star decomposition; critical vertex set; tree set}

\@namedef{subjclassname@2020}{\textup{2020} Mathematics Subject Classification}
\subjclass[2020]{05C63, 05C40, 05C75, 05C05, 05C69}

\begin{abstract}
In a series of four papers we determine structures whose existence is dual, in the sense of complementary, to the existence of stars or combs.
In the first paper of our series we determined structures that are complementary to arbitrary stars or combs.
Stars and combs can be combined, positively as well as negatively.
In the second and third paper of our series we provided duality theorems for all but one of the possible combinations.

In this fourth and final paper of our series, we complete our solution to the problem of finding complementary structures for stars, combs, and their combinations, by presenting duality theorems for the missing piece: for undominating stars.
Our duality theorems are phrased in terms of end-compactified subgraphs, tree-decompositions and tangle-distinguishing separators.
\end{abstract}
\vspace*{-1.14cm} 
\maketitle

\vspace*{-.75cm}

\section{Introduction}

\newcommand{\sls}{\mkern-.5\thinmuskip}
\newcommand{\srs}{\mkern+.25\thinmuskip}

\newcommand{\cor}{\text{\textnormal{cor}}}

\def\vr{\kern-1.5pt\lowfwd r{1.5}2\kern-1pt}
\def\rv{\kern-1pt\lowbkwd r{0.5}2\kern-1pt}
\def\vt{\kern-1.5pt\lowfwd t{1.5}2\kern-1pt}
\def\tv{\kern-1pt\lowbkwd t{0.5}2\kern-1pt}
\def\vD{\lowfwd D{0.3}1}
\def\vK{\lowfwd K{0.3}1}
\newcommand{\order}[1]{\vert{#1}\vert}

\newcommand{\tle}{\lesssim}
\newcommand{\onion}{\pi}
\newcommand{\Onion}{parliament}
\newcommand{\Onionring}{onion ring}
\newcommand{\oniontreeset}{\mathrm{bo}}
\newcommand{\Oniontreeset}{bi\-orien\-ted onion}


\noindent Two properties of infinite graphs are \emph{complementary} in a class of infinite graphs if they partition the class.
In a series of four papers we determine structures whose existence is complementary to the existence of two substructures that are particularly fundamental to the study of connectedness in infinite graphs: stars and combs.
See~\cite{StarComb1StarsAndCombs} for a comprehensive introduction, and a brief overview of results, for the entire series of four papers (\cite{StarComb1StarsAndCombs,StarComb2TheDominatedComb,StarComb3TheUndominatedComb} and this paper).

In the first paper~\cite{StarComb1StarsAndCombs} of this series we found structures whose existence is complementary to the existence of a star or a comb attached to a given set $U$ of vertices, and two types of these structures turned out to be relevant for both stars and combs: normal trees and tree-decompositions.
A \emph{comb} is the union of a ray $R$ (the comb's \emph{spine}) with infinitely many disjoint finite paths, possibly trivial, that have precisely their first vertex on~$R$. 
The last vertices of those paths are the \emph{teeth} of this comb.
Given a vertex set $U$, a \emph{comb attached to} $U$ is a comb with all its teeth in $U$, and a \emph{star attached to} $U$ is a subdivided infinite star with all its leaves in $U$.
Then the set of teeth is the \emph{attachment set} of the comb, and the set of leaves is the \emph{attachment set} of the star.
Given a graph $G$, a rooted tree $T\subset G$ is \emph{normal} in $G$ if the endvertices of every $T$-path in $G$ are comparable in the tree-order of $T$, cf.~\cite{DiestelBook5}.
For the definition of tree-decompositions see~\cite{DiestelBook5}.

As stars and combs can interact with each other, this is not the end of the story.
For example, a given vertex set $U$ might be connected in a graph $G$ by both a star and a comb, even with infinitely intersecting sets of leaves and teeth. 
To formalise this, let us say that a subdivided star $S$ \emph{dominates} a comb $C$ if infinitely many of the leaves of $S$ are also teeth of $C$.
A \emph{dominating star} in a graph~$G$ then is a subdivided star $S\subset G$ that dominates some comb $C\subset G$; and a \emph{dominated comb} in $G$ is a comb $C\subset G$ that is dominated by some subdivided star $S\subset G$.
Thus, a star $S\subset G$ is undominating in $G$ if it is not dominating in $G$; and a comb $C\subset G$ is undominated in $G$ if it is not dominated in $G$.

In the second paper~\cite{StarComb2TheDominatedComb} of our series we determined structures whose existence is complementary to the existence of dominating stars or dominated combs.
Like for arbitrary stars and combs, our duality theorems for dominating stars and dominated combs are phrased in terms of normal trees and tree-decompositions.

In the third paper~\cite{StarComb3TheUndominatedComb} of the series we determined structures whose existence is complementary to the existence of undominated combs.
Our investigations showed that the types of complementary structures for undominated combs are quite different compared to those for stars, combs, dominating stars and dominated combs.
On the one hand, normal trees are too strong to serve as complementary structures, which is why we considered more general subgraphs instead.
Tree-decompositions on the other hand are dynamic enough to allow for duality theorems, even in terms of star-decompositions---which are too strong to serve as complementary structures for stars, combs, dominating stars or dominated combs.

Among all the combinations of stars and combs, there is only one combination that we have yet to consider: undominating stars.
Here, in the fourth and final paper of the series, we determine structures whose existence is complementary to the existence of
undominating stars.
The types of complementary structures for undominating stars differ from those for stars, combs, dominating stars and dominated combs---surprisingly in the same way the types of complementary structures for undominated combs differ from them.

To begin, normal trees are too strong to serve as complementary structures for undominating stars: 
if $G$ is an uncountable complete graph and $U=V(G)$, then $G$ contains no undominating star \at $U$ but $G$ has no normal spanning tree.
However, if $G$ contains no undominating star \at $U$ and $U$ happens to be contained in a normal tree $T\subset G$, 
then the down-closure of $U$ in $T$ forms a locally finite subtree $H$.
In this situation $H$ witnesses that $U$ is \emph{tough} in $G$ in that
only finitely many components meet $U$ whenever finitely many vertices are deleted from~$G$.
This property gives a candidate for a subgraph that might serve as a complementary structure, even when $U$ is not contained in a normal tree.
Call a graph $G$ \emph{tough} if its vertex set is tough in $G$, i.e., if deleting finitely many vertices from $G$ always results in only finitely many components.
It is well known that the tough graphs are precisely the graphs that are compactified by their ends, cf.~\cite{VTopComp}.
Our first duality theorem for undominating stars is formulated in terms of tough subgraphs:

\begin{customthm}{\ref{thm: toughDual}}
\TFAD
\begin{enumerate}
    \item $G$ contains an \dblue{undominating star} \at $U$;
    \item there is a \dblue{tough subgraph} $H\subset G$ that contains $U$.
\end{enumerate}
\end{customthm}

As our second duality theorem for undominating stars, we also find star-decom\-po\-si\-tions that are complementary to undominating stars:

\begin{customthm}{\ref{StarDecomposition}}
\TFAD
\begin{enumerate}
    \item $G$ contains an \dblue{undominating star} \at $U$;
    \item $G$ has a tame \dblue{star-decomposition} such that $U$ is contained in the central part and every critical vertex set of $G$ lives in a leaf's part.
\end{enumerate}
\end{customthm}
\noindent Here, a finite vertex set $X\subset V(G)$ is \emph{critical} if infinitely many of the components of $G-X$ have their neighbourhood precisely equal to $X$.
Critical vertex sets were introduced in~\cite{EndsTanglesCrit}.
As tangle-distinguishing separators, they have a surprising background involving the \SC\ \comp\ of $G$, Robertson and Seymour's tangles from their graph-minor series, and Diestel's tangle \comp , cf.~\cite{StoneCechTangles,GMX,EndsAndTangles}.
For the definitions of `tame' and `live', see Section~\ref{section:undomStarStarDecomposition}.
Tame tree-decompositions have finite adhesion sets.

While the wordings of our two duality theorems for undominating stars are similar to those of the duality theorems for undominated combs, their proofs are~not.
In fact, a whole new strategy is needed to prove these two theorems.
The starting point of our strategy will be a very recent generalisation~\cite{DistinguishUltrafilterTangles} of Robertson and Seymour's tree-of-tangles theorem from their graph-minor series~\cite{GMX}.

This paper is organised as follows.
Section~\ref{section:undomStarToughSubgraph} establishes our duality theorem for undominating stars in terms of end-compactified subgraphs.
Section~\ref{section:undomStarStarDecomposition} proves our duality theorem for undominating stars in terms of star-decompositions. 
In Section~\ref{sec:summary} we summarise the duality theorems of the complete series.

Throughout this paper, $G=(V,E)$ is an arbitrary  graph.
We use the graph theoretic notation of Diestel's book~\cite{DiestelBook5}, and we assume familiarity with the tools and terminology described in the first paper of this series~\cite[Section~2]{StarComb1StarsAndCombs}.
For definitions and basic properties regarding separation systems refer to~\cite{AbstractSepSys}.

\section{Tough subgraphs}\label{section:undomStarToughSubgraph}

\noindent In this section, we prove our duality theorem for undominating stars in terms of tough subgraphs: 
\begin{mainresult}\label{thm: toughDual}
\TFAD
\begin{enumerate}
    \item $G$ contains an undominating star \at $U$;
    \item $G$ has a tough subgraph that contains $U$.
\end{enumerate}
\end{mainresult}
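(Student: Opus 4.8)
The plan is to establish the two halves of ``complementary'' separately --- that (ii) rules out (i), and that the failure of (i) forces (ii). \emph{From (ii) to $\neg$(i):} let $H\subseteq G$ be tough with $U\subseteq V(H)$, and let $S$ be any subdivided star attached to $U$, with (infinite) leaf set $W\subseteq U\subseteq V(H)$. It is enough to produce a comb $C\subseteq H$ all of whose teeth lie in $W$: then $C\subseteq G$, and since its teeth are leaves of $S$ the star $S$ dominates $C$, so $S$ is not undominating; as $S$ was arbitrary, (i) fails. To find $C$ I would use that $H$ is tough: for every finite $X\subseteq V(H)$ the graph $H-X$ has only finitely many components, so --- $W$ being infinite --- at least one of them, call it \emph{$W$-big}, contains infinitely many vertices of $W$. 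A $W$-big component of $H-X'$ is contained in a unique $W$-big component of $H-X$ whenever $X\subseteq X'$, so the collections of $W$-big components of $H-X$ form an inverse system of nonempty finite sets over the finite subsets of $V(H)$; an element of its (nonempty) inverse limit yields an end $\omega$ of $H$ each of whose neighbourhoods meets $W$ infinitely, and greedily threading a ray of $\omega$ while picking up fresh vertices of $W$ produces the comb $C$.

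\emph{From $\neg$(i) to (ii):} I would first reformulate $\neg$(i). If a subdivided star $S$ with leaf set $W$ dominates no comb, then $G$ has no comb attached to $W$, since any comb attached to $W$ has all of its (infinitely many) teeth in $W$ and would hence be dominated by $S$; conversely, given an infinite $W\subseteq U$ to which no comb of $G$ is attached, the star--comb lemma applied to $W$ in the connected graph $G$ yields a star attached to $W\subseteq U$ whose leaf set $W'\subseteq W$ likewise has no comb attached, so that this star is an undominating star attached to $U$. Thus $\neg$(i) says precisely that \emph{every infinite $W\subseteq U$ has a comb of $G$ attached to it}, and this is in turn equivalent to the condition
\[
(\star)\qquad \text{for every finite } X\subseteq V(G)\text{, only finitely many components of } G-X \text{ meet } U.
\]
Indeed, if $(\star)$ fails then picking one vertex of $U$ from each of infinitely many components of some $G-X$ gives an infinite $W\subseteq U$; but any comb attached to $W$ is tough (it is a ray with pendant paths, so $C-X$ has only finitely many components) while its infinitely many teeth lie in $W$ and hence in pairwise distinct components of $G-X$ --- a contradiction. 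Conversely $(\star)$ passes to every infinite $W\subseteq U$, and the inverse-limit argument above, now run inside $G$, produces an end of $G$ lying in the closure of $W$, hence a comb attached to $W$.

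It remains to build, from $(\star)$, a tough subgraph $H\subseteq G$ containing $U$, and this is the heart of the matter. If $U$ is finite, any finite connected subgraph through $U$ works, so assume $U$ infinite. I would decompose $G$ along a tree-decomposition adapted to $U$ and its ends. Writing a \emph{$U$-end} for an end $\omega$ of $G$ with $U\cap C(X,\omega)$ infinite for every finite $X$, condition $(\star)$ makes the family of tangle-like objects given by $U$ and its $U$-ends ``distinguishable'', so the recent generalisation~\cite{DistinguishUltrafilterTangles} of Robertson and Seymour's tree-of-tangles theorem furnishes a nested set of finite-order separations of $G$, and with it a tree-decomposition $(T,\mathcal{V})$ of $G$ of finite adhesion in which those objects are pairwise separated. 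One then lets $H$ be the union of $U$, the parts $V_t$ that meet $U$, and the finite adhesion sets along the paths of $T$ joining such parts. That $U\subseteq V(H)$ is immediate, and $H$ is tough because deleting a finite set from $H$ meets only finitely many parts, so by finite adhesion and $(\star)$ only finitely many components survive.

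The main obstacle is exactly this last construction, and the need for a sufficiently fine tangle-tree decomposition reflects a genuine phenomenon. Taking all vertices ``robustly joined to $U$'' can already fail to be tough --- let $U$ consist of a single vertex joined to another by infinitely many internally disjoint paths --- while taking a Steiner subtree toward $U$ inside a spanning tree cannot work in general, since a tough subgraph must sometimes be uncountable and far from tree-like: if $G$ is an uncountable complete graph and $U=V(G)$, then $H$ has to be essentially all of $G$. Thus $H$ must be ``thin'' away from $U$ yet may be forced to be ``thick'' exactly where $U$, or one of its ends, is thick; calibrating this is what the tree-of-tangles input buys us, and carrying out the toughness verification for the resulting $H$ is where the real work lies.
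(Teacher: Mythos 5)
Your first half is fine: the implication from (ii) to $\neg$(i) via an end of the tough subgraph $H$ in the closure of the leaf set $W$, and the reformulation of $\neg$(i) as the condition $(\star)$ that $U$ is tough in $G$, are both correct and match the paper's Lemma~\ref{lem: undominating star and tough} (the paper quotes the fact that every infinite vertex set has an end or a critical vertex set in its closure, which is exactly your inverse-limit argument). You have also correctly identified the key external input, the tree-of-tangles generalisation of~\cite{DistinguishUltrafilterTangles}.

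The gap is in the construction of $H$ from $(\star)$, and it is not merely a verification left undone: the subgraph you propose is wrong. You take $H$ to be (the subgraph induced on) the union of $U$, the parts meeting $U$, and the adhesion sets along connecting paths of the decomposition tree. But toughness of a part of a nested separation system is a property of its \emph{torso}, not of the induced subgraph: the torso owes its toughness to the virtual edges added inside the separators, and these are not edges of $G$. The paper's own cautionary example defeats your $H$: let $G$ have vertex set $\N\cup\bigcup_{n\in\N}A_n$ with each countably infinite $A_n$ joined completely to $\{0,\dots,n\}$, and let $U=\N$. Here $U$ is tough in $G$, the critical vertex sets are the sets $\{0,\dots,n\}$, and the part containing $U$ in any reasonable decomposition of the kind you describe is essentially $\N$ itself --- but $G[\N]$ is an infinite independent set $\overline{K^{\aleph_0}}$, hence not tough, and there are no further parts meeting $U$ whose adhesion sets could help. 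So your $H$ must in general be enlarged by vertices lying in \emph{no} part that meets $U$.

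What is missing is the step that realises the torso inside $G$: for every two vertices of the central part $B$ that lie together in a critical vertex set, one must add a $B$-path of $G$ joining them (such paths exist through the components in $\cK(X)$), and these paths must be chosen with care --- the paper picks each one so as to minimise the number of edges not already used --- so that every added vertex has finite degree in the union $L$ of these paths and, for each finite $X\subset B$, only finitely many components of $L-X$ avoid $B$. Only then is $H=G[B]\cup L$ tough: deleting a finite set leaves finitely many components meeting $B$ (by toughness of the torso, transferred via the paths) and finitely many avoiding $B$ (by the minimality of the path system). Without this ingredient the argument does not close, so I would count the proposal as incomplete at its central step, even though the overall architecture (reduce to toughness of $U$, invoke the principal/tree-of-tangles machinery, build $H$ around the resulting part) is the same as the paper's.
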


\noindent We remark that the tough graphs are precisely the graphs that are compactified by their ends, see~\cite{VTopComp}.

We prove that (i) and (ii) are complementary by proving that both $\neg$(i) and (ii) are equivalent to the assertion that $U$ is tough in $G$.
That $\neg$(i) is equivalent to $U$ being tough in $G$ will be shown in Lemma~\ref{lem: undominating star and tough}, and that (ii) is equivalent to $U$ being tough in $G$ will be shown in Theorem~\ref{thm: tough vertex set tough subgraph}.
It will be convenient to make this detour because $U$ being tough in $G$ is easier to work with than $G$ not containing an undominating star \at $U$.

\begin{lemma}\label{lem: undominating star and tough}
A set $U$ of vertices of a connected graph $G$ is tough in $G$ if and only if $G$ contains no undominating star \at $U$.
\end{lemma}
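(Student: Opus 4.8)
The plan is to prove the two implications separately, in both cases passing through the intermediate condition that $U$ is \emph{tough} in $G$, i.e.\ that $G-X$ has only finitely many components meeting $U$ for every finite $X\subset V(G)$.

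First I would show that if $U$ is \emph{not} tough in $G$, then $G$ contains an undominating star attached to $U$. So suppose there is a finite set $X$ such that infinitely many components $C_0,C_1,\dots$ of $G-X$ meet $U$. Since $G$ is connected, each $C_i$ has a neighbour in $X$, so by the pigeonhole principle some single vertex $x\in X$ has neighbours in infinitely many of the $C_i$; relabelling, we may assume $x$ is adjacent to each $C_i$. Picking, for each $i$, a vertex $u_i\in C_i\cap U$ together with an $x$--$u_i$ path inside $C_i+x$, these paths meet only in $x$, so their union is a subdivided infinite star $S$ with centre $x$ and leaves $u_i\in U$; thus $S$ is a star attached to $U$. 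It remains to check $S$ is undominating, i.e.\ that no comb $C\subset G$ is dominated by $S$. Any such comb would need infinitely many teeth among the leaves $u_i$ of $S$, hence in infinitely many distinct components $C_i$ of $G-X$; but the spine of $C$ is a ray, which is connected and therefore lives in $G-X$ together with $x$, so the spine can meet only finitely many of the $C_i$, and a comb's teeth lie within bounded distance of its spine along finite paths that are themselves connected and hence confined to single components once they leave $X$ — a contradiction if $|X|$ many of them are forced. More carefully: the comb $C$ has finitely many vertices in $X$, so $C-X$ has finitely many components, each contained in a single $C_i$; a comb is connected, so $C$ meets only finitely many $C_i$, contradicting that infinitely many teeth of $C$ lie in distinct $C_i$. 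So $S$ is undominating.

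Conversely I would show that if $U$ is tough in $G$, then $G$ contains no undominating star attached to $U$; equivalently, every star attached to $U$ is dominating. Let $S$ be a subdivided star with centre $c$ and infinitely many leaves in $U$, along the internally disjoint $c$--$u$ paths $P_u$. Build a ray $R$ greedily that will serve as a comb spine dominated by $S$. The key device is toughness combined with a compactness/König-type argument: since $U$ is tough, for every finite $X$ only finitely many components of $G-X$ meet $U$, so infinitely many leaves of $S$ lie in a single component; chase a nested sequence of such components (shrinking along an increasing sequence of finite separators, e.g.\ initial segments of the paths $P_u$ already used) to extract an end $\omega$ of $G$ to which infinitely many of the leaves $u$ "converge", in the sense that for every finite $X$ all but finitely many of the chosen $P_u$ meet the component of $G-X$ containing $\omega$. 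Using that infinitely many leaves lie in the $\omega$-component for each $X$, one can route a ray $R$ in $G$ belonging to $\omega$ that repeatedly comes back close to these leaves, and then attach finite paths from $R$ to infinitely many of the $u\in U$ pairwise disjointly, yielding a comb $C$ with infinitely many teeth among the leaves of $S$. Then $S$ dominates $C$, so $S$ is not undominating.

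The main obstacle is the converse direction: turning the combinatorial toughness hypothesis into an actual comb whose teeth meet the leaf set of $S$ infinitely. The delicate points are (a) extracting a single end $\omega$ such that infinitely many of the $P_u$ reach into every $\omega$-branch — this is a standard but careful compactness argument over the finite separators, using that $U$ (hence the leaf set of $S$) is tough; and (b) simultaneously building the spine ray in $\omega$ and the disjoint linking paths to the leaves, which requires interleaving the construction so that each newly chosen linking path avoids the finitely many vertices used so far, again possible because infinitely many leaves survive past any finite set. I expect this book-keeping to be the technical heart; the non-tough direction above is essentially a one-paragraph pigeonhole argument. Finally, combining the two directions gives that $\neg(\text{(i)})$ is equivalent to $U$ being tough in $G$, which is exactly the statement of Lemma~\ref{lem: undominating star and tough}.
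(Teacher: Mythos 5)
Your proposal is correct and follows the same overall strategy as the paper: both directions are routed through the intermediate notion of $U$ being tough in $G$. The direction ``$U$ not tough $\Rightarrow$ undominating star'' is exactly the paper's argument (pigeonhole on a finite separator $X$ to find a common neighbour $x$, fan out to leaves in infinitely many components), and your justification that the resulting star is undominating --- a comb minus the finite set $X$ has only finitely many components, so the comb meets only finitely many components of $G-X$ --- is a complete version of what the paper compresses into ``$X$ obstructs the existence of such a comb.'' The only real divergence is in the converse. The paper disposes of it in two lines by citing a lemma from the first paper of the series (every infinite vertex set in a connected graph has an end or a critical vertex set in its closure); since toughness of $U$ rules out critical vertex sets in the closure of the leaf set, an end lies in its closure and the standard ``end in the closure yields a comb attached to it'' fact finishes the job. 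You instead re-derive this from scratch: an inverse-limit/compactness argument over finite separators to extract an end $\omega$ in the closure of the leaf set, followed by an explicit construction of the spine and disjoint linking paths. That plan is sound and standard, but note two points: (a) your claim that ``all but finitely many'' of the paths $P_u$ enter every $\omega$-component is stronger than what the compactness argument gives (one only gets that infinitely many leaves, hence at least one, lie in $C(X,\omega)$ for every finite $X$), and fortunately only ``at least one'' is needed for the greedy comb construction; (b) the comb construction itself is left as acknowledged book-keeping --- it is the standard inductive extension of disjoint $R$--$L$ paths avoiding the finite union of everything built so far, and it does go through. So your route is more self-contained but longer; the paper's buys brevity at the cost of an external citation.
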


\begin{theorem}\label{thm: tough vertex set tough subgraph}
A set $U$ of vertices of a graph $G$ is tough in $G$ if and only if $G$ has a tough subgraph that contains $U$.
\end{theorem}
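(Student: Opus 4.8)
The plan is to prove the two implications separately, writing ``$U$ is \emph{tough in} $G$'' for the property in the theorem.

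For the backward implication, suppose $H\subseteq G$ is a tough subgraph with $U\subseteq V(H)$, and let $S\subseteq V(G)$ be finite. Since $S\cap V(H)$ is finite and $H$ is tough, $H-(S\cap V(H))$ has only finitely many components. I would inject the components of $G-S$ that meet $U$ into these: given such a component $K$, pick $u_K\in K\cap U\subseteq V(H)$; as $u_K\notin S$, it lies in a unique component of $H-(S\cap V(H))$. This map is injective because every component of $H-(S\cap V(H))$, being a connected subgraph of $G$ avoiding $S$, lies inside a single component of $G-S$. Hence only finitely many components of $G-S$ meet $U$, i.e.\ $U$ is tough in $G$.

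For the forward implication, assume $U$ is tough in $G$ and build a tough subgraph containing $U$. First I would reduce to $G$ connected: taking $S=\emptyset$ shows $U$ meets only finitely many components of $G$, and within each such component $C$ the set $U\cap C$ is tough in $C$ (the components of $C-S'$ are among the components of $G-S'$); since a finite disjoint union of tough graphs is tough, it suffices to produce in each such $C$ a tough subgraph containing $U\cap C$, and these combine to the desired $H$. With $G$ connected I would also dispose of $U$ finite (any finite connected subgraph of $G$ containing $U$ is tough), so assume $U$ infinite. Now I would build $H$ as an increasing union $\bigcup_{\alpha<\kappa}H_\alpha$ of connected subgraphs along a well-ordering $\langle u_\alpha\mid\alpha<\kappa\rangle$ of $U$: at a successor step, if $u_\alpha\notin H_\alpha$, attach to $H_\alpha$ a finite $u_\alpha$--$H_\alpha$ path of $G$; at limit steps take unions. (When $U$ lies in a normal tree $T$ of $G$ one can be explicit, as in the introduction: the down-closure of $U$ in $T$ is the sought subgraph, and toughness of $U$ in $G$ forces it to be locally finite. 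But in general $U$ need not lie in a normal tree---e.g.\ when $G$ is an uncountable complete graph---so a recursive construction is needed.)

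The crux, which I expect to be the main obstacle, is choosing the connecting paths carefully enough that the final $H$ is tough. The plan is to carry through the recursion an invariant to the effect that the separation behaviour of $H_\alpha$ is subordinate to that of $G$ as seen from $\{u_\beta\mid\beta<\alpha\}$: concretely, that after deleting any finite set from the final $H$ each resulting component is anchored at a vertex of $U$, so that---using that $U$ is tough in $G$---these anchors lie in only finitely many components of $G$ minus that set, which bounds the number of components of $H$ minus that set. The danger to rule out is that a careless choice of paths piles them up so as to create a finite separator of $H$ with infinitely many sides, each meeting $U$, even though $G$ does not separate those sides; excluding this, and propagating the control past the limit stages, is exactly where the hypothesis that $U$ is tough in $G$ enters essentially. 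Together with the (necessary) backward implication, this shows that $(\mathrm{ii})$ and ``$U$ tough in $G$'' are equivalent, as required.
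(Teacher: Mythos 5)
Your backward implication is correct and matches the paper's (which dispatches it in one line). The forward implication, however, has a genuine gap, and it sits exactly where you locate ``the crux''. The invariant you propose to carry through the recursion --- that every component of $H-S$ is anchored at a vertex of $U$ --- does not suffice to conclude that $H$ is tough: infinitely many components of $H-S$ can all meet $U$ inside a \emph{single} component of $G-S$, so toughness of $U$ in $G$ bounds the number of components of $G-S$ meeting $U$ but not the number of components of $H-S$. The paper's own running example makes this concrete. Let $G$ have vertex set $\N\cup\bigcup_n A_n$, where each $A_n$ is countably infinite and joined completely to $\{0,\dots,n\}$, and let $U=\N$; then $U$ is tough in $G$ (all of $\N\setminus X$ lies in one component of $G-X$ for finite $X$). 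If your recursion connects each $n\ge 1$ to $0$ by the path $n\,a_n\,0$ through a private vertex $a_n\in A_n$ --- a perfectly plausible outcome of ``attach a finite $u_\alpha$--$H_\alpha$ path'' --- the resulting connected $H$ satisfies your invariant, yet $H-\{0\}$ has infinitely many components, each meeting $U$. So no local anchoring condition of this kind can close the argument, and the proposal does not say how the paths are actually to be chosen.

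The paper's route is structurally different and relies on non-elementary machinery. It first enlarges $U$ to the part $B$ of a \emph{principal tree set} built from all critical vertex sets of $G$, using a strongly admissible function $\cK$ with each $\cK(X)$ cofinite in $\CX$ and avoiding $U$ (Theorems 5.10 and 5.11 of \cite{DistinguishUltrafilterTangles}); it shows the \emph{torso} of $B$ is tough; and it then realises that torso inside $G$ by adding, for every pair of vertices of $B$ lying in a common critical vertex set, a $B$-path chosen to minimise the number of edges not already used. That minimisation is what forces the paths through any vertex outside $B$ to form a fan into $B$, yielding finite degree outside $B$ and only finitely many components avoiding $B$ after a finite deletion --- exactly the control your sketch hopes for but does not supply. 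In short: the easy direction is fine, but the hard direction remains unproved, and the stated plan for it would, as written, produce non-tough subgraphs.
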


\begin{proof}[Proof of Theorem~\ref{thm: toughDual}]
Combine Lemma~\ref{lem: undominating star and tough} and Theorem~\ref{thm: tough vertex set tough subgraph} above.
\end{proof}

\noindent While the  proof of Theorem~\ref{thm: tough vertex set tough subgraph} takes the rest of this section, that of Lemma~\ref{lem: undominating star and tough} is easy and we shall provide it straight away. Recall that a finite set $X$ of vertices of an infinite graph $G$ is \emph{critical} if the collection
\begin{align*}
    \CX:=\{\,C\in\cC_X\mid N(C)=X\,\}
\end{align*}
is infinite, where $\cC_X$ is the collection of all components of $G-X$. 
A critical vertex set $X$ of $G$ lies \emph{in the closure} of $M$, where $M$ is either a subgraph of $G$ or a set of vertices of $G$, if infinitely many components in $\CX$ meet $M$.

\begin{proof}[Proof of Lemma~\ref{lem: undominating star and tough}]
If $U$ is tough in $G$ then no critical vertex set of $G$ lies in the closure of~$U$. 
We know by \cite[Lemma~2.9]{StarComb1StarsAndCombs} that every infinite set of vertices in a connected graph has an end or a critical vertex set in its closure.
Therefore, every infinite subset $U'\subset U$ has an end of $G$ in its closure and, in particular, there is always a comb in $G$ \at $U'$.
Thus, every star in $G$ \at $U$ must be dominating.

Conversely, if $U$ is not tough in $G$, then there is a finite vertex set $X\subset V(G)$ such that some infinitely many components of $G-X$ meet $U$.
Then infinitely many of these components send an edge to the same vertex $x\in X$ by the pigeonhole principle. This allows us to make $x$ the centre of a star $S$ \at $U$ by taking $x$--$U$ paths in $G[x+C]$, one for each of the infinitely many components~$C$ that meet $U$ and have $x$ in their neighbourhood.
Now $X$ obstructs the existence of a comb that has infinitely many teeth that are also leaves of $S$, and so $S$ must be undominating.
\end{proof}

Before we turn to the proof of Theorem~\ref{thm: tough vertex set tough subgraph}, we summarise a few elementary properties that are complementary to containing an undominating star \at a given vertex set $U$:

\begin{lemma}\label{lem: list}
Let $G$ be any connected graph, let $U\subseteq V(G)$ be any vertex set and let \emph{($\ast$)} be the statement that $G$ contains an undominating star \at ~$U$. 
Then the following assertions are complementary to \emph{($\ast$):}
\begin{enumerate}
    \item $U$ is tough in $G$;
    \item $G$ has no critical vertex set that lies in the closure of $U$;
    \item $U$ is compactified by the ends of $G$ that lie in the closure of $U$.
\end{enumerate}
If $U$ is normally spanned in $G$, then the following assertion is complementary to \emph{$(\ast)$} as well:
\begin{enumerate}[resume]
    \item $G$ contains a locally finite normal tree that contains $U$ cofinally.
\end{enumerate}
\end{lemma}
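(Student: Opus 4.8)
The plan is to reduce everything to Lemma~\ref{lem: undominating star and tough}, which already states that (i) is complementary to $(\ast)$: for connected $G$ it says precisely that $U$ is tough in $G$ if and only if $\neg(\ast)$ holds. Hence it suffices to prove that (ii) and (iii) are each equivalent to (i), and that (iv) is equivalent to (i) under the extra hypothesis that $U$ is normally spanned in $G$. Then all of (i)--(iv) are equivalent to $\neg(\ast)$ on their respective classes of graphs, which is exactly the asserted complementarity.

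For (i)$\Leftrightarrow$(ii): the implication (i)$\Rightarrow$(ii) is already used in the proof of Lemma~\ref{lem: undominating star and tough}, since a critical vertex set in the closure of $U$ would contradict toughness outright. For the converse I argue contrapositively. If $U$ is not tough, fix a finite $X\subset V(G)$ such that infinitely many components of $G-X$ meet $U$; each such component $C$ has $N(C)\subset X$, so by the pigeonhole principle infinitely many of them share one neighbourhood $X'\subset X$. No vertex of such a $C$ has a neighbour outside $X'$, so $C$ is a component of $G-X'$ with $N(C)=X'$; thus infinitely many components of $G-X'$ have neighbourhood exactly $X'$ and infinitely many of them meet $U$, i.e.\ $X'$ is a critical vertex set lying in the closure of $U$, contradicting (ii).

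For (ii)$\Leftrightarrow$(iii) I would invoke \cite[Lemma~2.9]{StarComb1StarsAndCombs} --- every infinite set of vertices of a connected graph has an end or a critical vertex set in its closure --- together with the elementary topology of the end compactification of $G$ (reading ``$U$ is compactified by a set $\Psi$ of ends'' as: the closure of $U$ in the end compactification is compact and its only ends lie in $\Psi$). Given (ii), every infinite $U'\subset U$ has an end of $G$ in its closure, necessarily one in the closure of $U$, and a routine compactness argument upgrades this to (iii). Conversely, if (iii) holds but some critical $X$ lies in the closure of $U$, pick $u_C\in U\cap C$ for infinitely many components $C$ of $G-X$ with $N(C)=X$ that meet $U$; by (iii) the infinite set $\{u_C\}$ accumulates at an end $\omega$ of $G$, but the basic neighbourhood of $\omega$ cut out by the finite separator $X$ is a single component of $G-X$ and hence contains at most one $u_C$ --- a contradiction. (Alternatively (i)$\Leftrightarrow$(iii) can be quoted from the characterisation of tough graphs as those compactified by their ends, cf.~\cite{VTopComp}.)

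Finally, (i)$\Leftrightarrow$(iv) under the hypothesis $U\subset V(T)$ for some normal tree $T\subset G$. Assume (i) and let $H$ be the down-closure of $U$ in $T$; by the standard structure theory of normal trees \cite{DiestelBook5}, $H$ is a subtree of $T$ that is again normal in $G$, and it contains $U$ cofinally by construction, so it remains to show $H$ is locally finite. Fix $t\in V(H)$ and let $P$ be the finite path in $T$ from the root to $t$; recall that for every child $t'$ of $t$ in $T$ the subtree of $T$ above $t'$ has all of its $G$-neighbours on $P$. Hence distinct children $t'$ of $t$ whose subtrees meet $U$ span distinct components of $G-P$ that meet $U$, of which there are only finitely many since $U$ is tough and $P$ is finite; so $t$ has finite up-degree in $H$, and $H$ is locally finite. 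Conversely, if (iv) holds and $H\subset G$ is a locally finite normal tree containing $U$ cofinally, then for any finite $X\subset V(G)$ the graph $H-X$ has only finitely many components (removing a finite vertex set from a connected locally finite graph leaves only finitely many), and two vertices of $U$ in the same component of $H-X$ lie in the same component of $G-X$; therefore only finitely many components of $G-X$ meet $U$, i.e.\ $U$ is tough. The only points needing care are matching, in (ii)$\Leftrightarrow$(iii), the topological notion ``compactified by a set of ends'' with the combinatorial notion of an end (or critical set) lying in the closure of $U$, and the local-finiteness of the down-closure in (iv); none of the four equivalences is deep, which is why they are collected here as a list of elementary complementary properties.
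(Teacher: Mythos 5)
Your overall architecture is exactly the paper's: use Lemma~\ref{lem: undominating star and tough} to make (i) complementary to $(\ast)$, get (ii) from (i) by the pigeonhole argument, treat (iii) topologically, and handle (iv) via the down-closure of $U$ in a normal tree (which is precisely the argument the paper's proof defers to its introduction). Your (ii)$\Leftrightarrow$(i) pigeonhole step and your treatment of (iv) are correct and complete --- for the local-finiteness claim in (iv) you should really invoke the separation lemma for normal trees (incomparable $x,y\in T$ are separated in $G$ by $\lceil x\rceil\cap\lceil y\rceil$) rather than the slightly off statement that the subtree above a child ``has all of its $G$-neighbours on $P$'' (it may have neighbours outside $V(T)$), but the conclusion that distinct children of $t$ lie in distinct components of $G-\lceil t\rceil$ is exactly what that lemma gives. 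The one genuine soft spot is the forward direction of (ii)$\Rightarrow$(iii): ``every infinite $U'\subseteq U$ has an end in its closure, and a routine compactness argument upgrades this to (iii)'' is not routine --- having an $\omega$-accumulation point for every infinite subset is countable compactness, not compactness, and $\vert G\vert$ need not be metrizable. The paper avoids this by working in the compactification $\vert G\vert_\Gamma$ of $G$ by its ends \emph{and} critical vertex sets, which is compact for every graph; the closure of $U$ there is automatically compact, and by (ii) it contains no critical vertex sets, so it equals $U\cup\partial_\Omega U$ and (iii) follows at once. Your fallback of quoting the characterisation of tough graphs as those compactified by their ends would also work, but note the cited result is stated for $U=V(G)$, so you would need its relative version. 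Your converse direction (iii)$\Rightarrow$(ii) via the finite separator $X$ cutting the points $u_C$ apart is fine and matches the paper's open-cover argument in substance.
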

\begin{proof}

By Lemma~\ref{lem: undominating star and tough} we have that (i) is complementary to ($\ast$).
The assertions (i) and (ii) are equivalent by the pigeonhole principle, and hence (ii) is complementary to ($\ast$) as well.
Property (iii) is in turn equivalent to (ii) because every graph is compactified by its ends and critical vertex sets in a compactification $\minG=G\cup\Omega(G)\cup\crit(G)$ (see~\cite{EndsTanglesCrit} for definitions): For (ii)$\to$(iii) note that the closure $\overline{U}=U\cup\Abs{U}$ of $U$ in $\minG$ is the desired compactification, and for $\neg$(ii)$\,{\to}\,{\neg}$(iii) note that for every critical vertex set $X$ in the closure of $U$ the infinitely many components of $G-X$ meeting $U$ give rise to an open cover of $U\cup\Abs{U}$  in $\minG$ that has no finite subcover.
That (iv) is complementary to ($\ast$) has already been discussed in the introduction.
\end{proof}

Now we turn to the proof of Theorem~\ref{thm: tough vertex set tough subgraph}. 
If a graph $G$ has a tough subgraph containing some vertex set $U$, then clearly $U$ is tough in $G$.
The reverse implication, which states that that for every vertex set $U$ that is tough in $G$ the graph $G$ contains a tough subgraph containing $U$, is harder to show and needs some preparation.

If $U$ is tough in $G$, then no critical vertex set of $G$ lies in the closure of $U$,
that is, for every critical vertex set $X$ of $G$ only finitely many components in $\CX$ meet~$U$. 
The collection $\cC(X)$ of these finitely many components gives rise to a separation $\lsep{\CX\setminus\cC(X)}{X}=(A_X,B_X)$ that we think of as pointing towards $B_X$.
As $U\subseteq B_X$ for all critical vertex sets $X$, all the separations $(A_X,B_X)$ point towards the tough vertex set~$U$.
Hence we have a candidate for a tough subgraph: the intersection $\bigcap\,\{\,G[B_X]\mid X\in\crit(G)\,\}$.
This candidate contains $U$ because $U$ is contained in all $G[B_X]$, but it can happen that our candidate is a non-tough induced $\overline{K^{\aleph_0}}\subset G$ with vertex set~$U$, as the following example shows.

For every $n\in \N$ let $A_n$ be some countably infinite set, such that $A_n$ is disjoint from every $A_m$ with $m\neq n$ and also disjoint from $\N$. Let $G$ be the graph on $\N\cup\bigcup_{n\in\N}A_n$ where every vertex in $A_n$ is joined completely to $\{0,\dots, n\}$.
Then the critical vertex sets are precisely the vertex sets of the form $\{0,\ldots,n\}$.
For every critical vertex set $X=\{0,\ldots,n\}$ the collection of components $\CX$ consists of the singletons in $A_n$ and the component of $G-X$ that contains $\N\setminus X$.
Therefore, if we set $U=\N$, then $G[B_X]=G-A_n$, and our candidate $\bigcap_X G[B_X]$ turns out to be $G[\N]=\overline{K^{\aleph_0}}$.

Although our approach in its naive form fails, this is not the end of it. We will stick to the idea but perform the construction in a more sophisticated way. For this we shall need the following notation and two structural results from \cite{DistinguishUltrafilterTangles} for critical vertex sets in graphs, Theorems~\ref{principalAdmissableFunctionExists} and~\ref{TreeSetAPC} below. Essentially, these two theorems together will reveal that the separations $(A_X,B_X)$ with $X$ critical in $G$ can be slightly modified to form a tree set.

A \emph{tree set} is a nested separation system that has neither trivial elements nor degenerate elements, cf.~\cite{RhdTreeSets}.
When $(\vS,{\le},{}^\ast)$ is a tree set, we also call $\vS$ and $S$ tree sets.
In our setting, we shall not have to worry about trivial or degenerate separations too much.
Indeed, usually our nested sets of separations will consist of separations $(A,B)$ of a graph with neither $A\setminus B$ nor $B\setminus A$ empty, and these sets are known to form \emph{regular} tree sets: tree sets that do not contain small elements.

Let $S$ be any tree-set consisting of finite-order separations of $G$. A \emph{part} of $S$ is a vertex set of the form $\bigcap\,\{\,B\mid (A,B)\in O\,\}$ where $O$ is a consistent orientation of $S$. Thus, if $O$ is any consistent orientation of $S$, then it defines a part, which in turn induces a subgraph of $G$. The graph obtained from this subgraph by adding an edge $xy$ whenever $x$ and $y$ are two vertices of the part that lie together in the separator of some separation in $O$ is called the \emph{torso} of $O$ (or of the part, if $O$ is clear from context).
Thus, torsos usually will not be subgraphs of~$G$.
We need the following standard lemma:

\begin{lemma}[{\cite[Corollary~2.11]{DistinguishUltrafilterTangles}}]\label{restrictconnectedsets}
Let $G$ be any graph and let $W\subset V(G)$ be any connected vertex set.
If $B$ is a part of a tree set of separations of $G$, then $W\cap B$ is connected in the torso of $B$.
\end{lemma}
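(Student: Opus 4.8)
I would prove the (formally stronger, but clearly equivalent) statement that any two vertices $x,y$ of $W\cap B$ lie in a common component of the subgraph of the torso induced by $W\cap B$. Fix $x,y\in W\cap B$, let $O$ be the consistent orientation of the tree set defining the part $B$, so that $B=\bigcap\{\,B'\mid(A',B')\in O\,\}$, and choose a path $P$ from $x$ to $y$ in $G[W]$. List the vertices of $P$ lying in $B$ as $x=v_0,v_1,\dots,v_m=y$, in the order $P$ traverses them. It then suffices to show that $v_tv_{t+1}$ is an edge of the torso for each $t$, since then $v_0v_1\cdots v_m$ is a walk through $W\cap B$ in the torso. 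If $v_t,v_{t+1}$ are consecutive on $P$ this is immediate, as $v_tv_{t+1}\in E(G)$ and both ends lie in $B$; so I may assume the subpath $P'=p_0p_1\cdots p_\ell$ of $P$ from $p_0=v_t$ to $p_\ell=v_{t+1}$ has $\ell\ge 2$ and all of $p_1,\dots,p_{\ell-1}$ outside $B$. Since $P$ is a path we have $p_0\ne p_\ell$, so by the definition of the torso it is enough to find a separation in $O$ whose separator contains both $p_0$ and $p_\ell$.

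To produce such a separation I would walk along $P'$ while keeping $p_0$ anchored. As $p_1\notin B$, pick $(A_1,B_1)\in O$ with $p_1\in A_1\setminus B_1$; then $p_0\in B\subseteq B_1$, and since $(A_1,B_1)$ is a separation of $G$ the edge $p_0p_1$ forces $p_0\in A_1\cap B_1$. Let $j_1$ be the largest index with $p_{j_1}\in A_1\setminus B_1$; then $1\le j_1\le\ell-1$ (using $p_\ell\in B\subseteq B_1$), the edge $p_{j_1}p_{j_1+1}$ gives $p_{j_1+1}\in A_1$, and maximality of $j_1$ gives $p_{j_1+1}\in A_1\cap B_1$. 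If $j_1+1=\ell$ then $p_0,p_\ell\in A_1\cap B_1$ and we are done; otherwise $p_{j_1+1}$ is an interior vertex of $P'$, hence outside $B$, and I repeat the step with $p_{j_1+1}$ in place of $p_1$ to obtain $(A_2,B_2)\in O$, an index $j_2\ge j_1+1$, and $p_{j_2+1}\in A_2\cap B_2$, and so on. At each step $k\to k+1$ two facts must be checked. First, $j_{k+1}\ge j_k+1$, because $p_{j_k+1}\in A_{k+1}\setminus B_{k+1}$ by choice; hence the indices strictly increase and stay $\le\ell-1$, so the process halts after finitely many steps with some $j_k+1=\ell$. Second, $p_0$ remains in the separator: the vertex $p_{j_{k+1}}$ has index exceeding $j_k$, so by maximality of $j_k$ it lies in $B_k$, while $p_{j_{k+1}}\in A_{k+1}\setminus B_{k+1}$ shows it is not in $B_{k+1}$; thus $B_k\not\subseteq B_{k+1}$, so among the possible orientations of the nested pair $\{A_k,B_k\},\{A_{k+1},B_{k+1}\}$, after discarding the two that consistency of $O$ forbids, the only one left is $(A_k,B_k)\le(A_{k+1},B_{k+1})$, i.e.\ $A_k\subseteq A_{k+1}$ and $B_{k+1}\subseteq B_k$; inductively then $p_0\in A_k\subseteq A_{k+1}$ and $p_0\in B\subseteq B_{k+1}$, so $p_0\in A_{k+1}\cap B_{k+1}$. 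When the process halts, $p_0$ and $p_\ell=p_{j_k+1}$ both lie in the separator $A_k\cap B_k$ of $(A_k,B_k)\in O$, as required.

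The only genuinely non-formal step is the claim that the walk ``does not turn back'', i.e.\ that $(A_k,B_k)\le(A_{k+1},B_{k+1})$ at every stage; I expect this to be the main point to get right, and it is exactly where nestedness and consistency of $O$ are used. Everything else is finite bookkeeping along $P'$, and the decision to keep $p_0$ fixed as an anchor — rather than recursing on a shorter subpath, whose first vertex would no longer lie in $B$ — is what makes the argument close.
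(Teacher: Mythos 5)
Your overall strategy is sound and is the standard way to prove this lemma (which the paper only cites, without proof): reduce to a maximal excursion $P'=p_0\cdots p_\ell$ of $P$ outside $B$, and track a sequence of separations of $O$ along it so that the separator of the last one catches both $p_0$ and $p_\ell$. The bookkeeping (the indices $j_k$ strictly increase, $p_{j_k+1}\in A_k\cap B_k$, termination) is correct. But the step you yourself single out as the crux contains a wrong claim. For two distinct nested separations $(A_k,B_k),(A_{k+1},B_{k+1})\in O$ there are four possible mutual orientations, and consistency of $O$ rules out only \emph{one} of them, namely $(B_k,A_k)\le(A_{k+1},B_{k+1})$, which is the same relation as $(B_{k+1},A_{k+1})\le(A_k,B_k)$ (inverting both sides reverses $\le$), i.e.\ the configuration in which the two elements of $O$ point away from each other. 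The configuration in which they point \emph{towards} each other, $(A_k,B_k)\le(B_{k+1},A_{k+1})$, i.e.\ $A_k\subseteq B_{k+1}$ and $A_{k+1}\subseteq B_k$, is perfectly consistent --- it is exactly the relation between any two separations of a star oriented inwards, and such orientations are the main objects of this paper. So after discarding $(A_{k+1},B_{k+1})\le(A_k,B_k)$ via $B_k\not\subseteq B_{k+1}$ and the inconsistent case, you are still left with two candidates, not one.

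The gap is repairable with facts you already have in hand: in the ``pointing towards each other'' case $A_k\subseteq B_{k+1}$ would hold, but you established $p_{j_k+1}\in A_k\cap B_k$, while $(A_{k+1},B_{k+1})$ was chosen precisely so that $p_{j_k+1}\in A_{k+1}\setminus B_{k+1}$, so $p_{j_k+1}\in A_k\setminus B_{k+1}$ and $A_k\not\subseteq B_{k+1}$. With that third case excluded, $(A_k,B_k)\le(A_{k+1},B_{k+1})$ is indeed the only surviving orientation and the rest of your argument ($p_0\in A_k\subseteq A_{k+1}$ and $p_0\in B\subseteq B_{k+1}$) goes through. Please also make explicit that the two separations are distinct as unoriented separations before invoking consistency (this follows from $B_k\not\subseteq B_{k+1}$ together with the fact that $O$, being an orientation, cannot contain both orientations of one separation), and note the trivial observation that $A'\cup B'=V(G)$ for every separation, which you use implicitly whenever you pass from ``$p_i\notin B'$'' to ``$p_i\in A'\setminus B'$''.
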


Given a collection $\cY$ of (in this paper usually finite) vertex sets of $G$ we say that a vertex set $X$ of $G$ is $\cY$-\emph{\principal } if $X$ meets for every $Y\in\cY$ at most one component of $G-Y$.
And we say that $\cY$ is \emph{\principal } if all its elements are $\cY$-\principal .

If $X\subset V(G)$ meets precisely one component of $G-Y$ for some $Y\subset V(G)$, then we denote this component by $C_Y(X)$.

Every critical vertex set of a graph is $\cX$-\principal : since every two vertices in a critical vertex set $X$ are linked by infinitely many independent paths (these exist as $\CX$ is infinite), no two vertices in $X$ are separated by a finite vertex set.

\begin{definition}[{\cite[Definition~5.9]{DistinguishUltrafilterTangles}}]
Suppose that $\cY$ is a \principal\ collection of vertex sets of a graph $G$.
A function that assigns to every $X\in\cY$ a subset $\cK(X)\subset\CX$ is called \emph{\admissable } for $\cY$ if for every two $X,Y\in\cY$ that are incomparable as sets we have either $C_X(Y)\notin\cK(X)$ or $C_Y(X)\notin\cK(Y)$.
If additionally \mbox{$\vert\,\CX\setminus\cK(X)\,\vert\le 1$} for all $X\in\cY$, then $\cK$ is \emph{strongly} \admissable\ for $\cY$.
\end{definition}

\begin{theorem}[{\cite[Theorem~5.10]{DistinguishUltrafilterTangles}}]\label{principalAdmissableFunctionExists}
For every \principal\ collection of vertex sets of a connected graph there is a strongly \admissable\ function.
\end{theorem}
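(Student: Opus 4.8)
\noindent The plan is to recast the statement in terms of a ``defect function'' and then build such a function by transfinite recursion along a well\-ordering of~$\cY$. Given a candidate~$\cK$, write $c(X)$ for the unique element of $\CX\setminus\cK(X)$ when that set is a singleton, and put $c(X):=\bot$ when $\CX\setminus\cK(X)=\emptyset$. Then $\cK$ is strongly \admissable\ for~$\cY$ if and only if $c$ assigns to every $X\in\cY$ either $\bot$ or a member of~$\CX$ in such a way that, for all incomparable $X,Y\in\cY$ with $C_X(Y)\in\CX$ and $C_Y(X)\in\CY$, we have $c(X)=C_X(Y)$ or $c(Y)=C_Y(X)$. (Here $C_X(Y)$ is well defined because $\cY$ is \principal\ and $Y\not\subseteq X$.) Call such a pair $\{X,Y\}$ \emph{active}; an inactive incomparable pair can never cause a violation, since $\cK(X)\subseteq\CX$. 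So it suffices to produce a function $c$ with this property.

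\noindent First I would fix a well\-ordering of~$\cY$ and define $c$ recursively. Having defined $c$ on all predecessors of some $X\in\cY$, call a predecessor $Y$ of~$X$ \emph{demanding} if the pair $\{X,Y\}$ is active and $c(Y)\neq C_Y(X)$, i.e.\ if the pair has not already been dealt with on~$Y$'s side. Every demanding predecessor $Y$ forces $c(X)=C_X(Y)$, so I set $c(X)$ to this common value provided all demanding predecessors agree, and set $c(X):=\bot$ if there are none. This visibly preserves the invariant that every active pair of already\-treated members of~$\cY$ has been dealt with on one of its sides, and at the end it delivers a valid $c$. So the whole argument comes down to the following Key Claim: \emph{any two demanding predecessors $Y,Z$ of the same~$X$ satisfy $C_X(Y)=C_X(Z)$.}

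\noindent The main obstacle — and, I expect, the real content of the theorem — will be to see that the Key Claim follows from a structural property of \principal\ collections, namely: \emph{if $X,Y,Z\in\cY$ with $Y$ and $Z$ incomparable to~$X$, and $C_X(Y)\neq C_X(Z)$ and $N(C_X(Z))=X$, then $Y$ and $Z$ are incomparable and $C_Y(X)=C_Y(Z)$.} To prove this I would argue as follows. Were $Y$ and $Z$ comparable, the component of $G-X$ meeting the larger one would meet the smaller one too, so $C_X(Y)$ and $C_X(Z)$ would coincide; hence $Y$ and $Z$ are incomparable. Since $Y\setminus X$ lies in $C_X(Y)$, a component of $G-X$ distinct from $C_X(Z)$, the set $Y$ is disjoint from $C_X(Z)$; thus the nonempty set $Z\setminus X\subseteq C_X(Z)$ avoids~$Y$, while $X\setminus Y\neq\emptyset$. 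Pick $z\in Z\setminus X$ and $x\in X\setminus Y$. Because $N(C_X(Z))=X$, the vertex $x$ has a neighbour $w$ in~$C_X(Z)$; concatenating a $z$--$w$ path inside the connected set $C_X(Z)$, which avoids both $X$ and~$Y$, with the edge~$wx$ produces a $z$--$x$ walk avoiding~$Y$. Therefore $z$ and $x$ lie in the same component of $G-Y$, which says precisely that $C_Y(Z)=C_Y(X)$. (Symmetrically, if in addition $N(C_X(Y))=X$, then $C_Z(Y)=C_Z(X)$.) The hypothesis $C_X(Z)\in\CX$ is essential here: dropping it, the conclusion can fail.

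\noindent Granting this sublemma, the Key Claim follows. Suppose for contradiction that $Y$ and $Z$ are demanding predecessors of~$X$ with $C_X(Y)\neq C_X(Z)$, and, relabelling if necessary, that $Y$ precedes~$Z$. Since $\{X,Y\}$ and $\{X,Z\}$ are active we have $C_X(Y),C_X(Z)\in\CX$ as well as $C_Y(X)\in\CY$ and $C_Z(X)\in\CC{Z}$, so the sublemma (applied twice) gives that $Y$ and $Z$ are incomparable and that $C_Y(Z)=C_Y(X)\in\CY$ and $C_Z(Y)=C_Z(X)\in\CC{Z}$; hence $\{Y,Z\}$ is itself active. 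But then the invariant in force after $Z$ was treated yields $c(Y)=C_Y(Z)$ or $c(Z)=C_Z(Y)$, that is, $c(Y)=C_Y(X)$ or $c(Z)=C_Z(X)$, contradicting that both $Y$ and $Z$ are demanding for~$X$. Consequently the recursion never gets stuck, and the resulting $c$ — equivalently, the corresponding~$\cK$ — is strongly \admissable\ for~$\cY$. Note that no care is needed at the steps without a demanding predecessor, where $c(X):=\bot$ simply works.
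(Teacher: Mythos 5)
The paper does not prove this statement itself --- it is imported verbatim from \cite[Theorem~5.10]{DistinguishUltrafilterTangles} --- so there is no in-paper proof to compare against. Judged on its own terms, your argument is correct and self-contained. The reduction to the function $c$ is sound: for incomparable $X,Y$ the component $C_X(Y)$ is well defined because $Y$ is $\cY$-\principal\ and $Y\setminus X\neq\emptyset$, and a pair can only violate admissibility when $C_X(Y)\in\CX$ and $C_Y(X)\in\CY$, so restricting attention to your ``active'' pairs is legitimate. The sublemma is the real content and checks out: comparability of $Y$ and $Z$ would force $C_X(Y)=C_X(Z)$ via the common nonempty set $Y\setminus X$ or $Z\setminus X$; the disjointness of $Y$ from $C_X(Z)$ follows from $Y\setminus X\subset C_X(Y)$; and the hypothesis $N(C_X(Z))=X$ is used exactly where it must be, to attach $x\in X\setminus Y$ to the connected, $Y$-avoiding set $C_X(Z)$. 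Applying the sublemma in both directions does show $\{Y,Z\}$ is active with $C_Y(Z)=C_Y(X)$ and $C_Z(Y)=C_Z(X)$, so two demanding predecessors would contradict the invariant already established for the pair $\{Y,Z\}$; the apparent circularity between the Key Claim and the invariant is the usual simultaneous transfinite induction and is handled correctly. One cosmetic remark: when a demanding predecessor exists, $c(X)=C_X(Y)$ automatically lies in $\CX$ because the pair is active, so $\cK(X)=\CX\setminus\{c(X)\}$ is indeed a cofinite-by-one subset of $\CX$, as strong admissibility requires. I cannot certify that this coincides with the route taken in \cite{DistinguishUltrafilterTangles}, but as a proof it is complete.
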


\begin{theorem}[{\cite[Theorem~5.11]{DistinguishUltrafilterTangles}}]\label{TreeSetAPC}
Let $G$ be any connected graph, let $\cY$ be any \principal\ collection of vertex sets of $G$ and let $\cK$ be any \admissable\ function for~$\cY$.
Then for every distinct two $X,Y\in\cY$, after possibly swapping $X$ and $Y$,
\begin{align*}
    \text{either }\tlsep{X}\le\trsep{Y}\text{ or }\tlsep{X}\le\lsep{C_Y(X)}{Y}\le\tlsep{Y}.\
\end{align*}
In particular, if $\emptyset\subsetneq\cK(X)\subsetneq\cC_X$ for all $X\in\cY$, then the separations $\tsep{X}$ form a regular tree set for which the separations $\tlsep{X}$ form a consistent orientation.
\end{theorem}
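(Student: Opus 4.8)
Here is the plan I would follow to prove Theorem~\ref{TreeSetAPC}.

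The plan is to reduce the whole statement to a single structural fact about where the components in $\cK(X)$ sit relative to another member of $\cY$, and then to finish by a short case distinction and routine lattice bookkeeping. Throughout I would abbreviate $\tlsep{X}=(A_X,B_X)$, so that $A_X=X\cup\bigcup\cK(X)$ and $B_X=X\cup\bigcup(\cC_X\setminus\cK(X))$; note $A_X\cap B_X=X$, that $B_X=V(G)\setminus\bigcup\cK(X)$, and that $\trsep{X}=(B_X,A_X)$. Under the hypothesis $\emptyset\subsetneq\cK(X)\subsetneq\cC_X$ we get $A_X\setminus B_X=\bigcup\cK(X)\neq\emptyset$ and $B_X\setminus A_X\neq\emptyset$, so each $\tsep{X}$ is a regular separation (hence nondegenerate and nontrivial); so for the ``in particular'' clause it remains only to establish nestedness and consistency, both of which I would read off from the displayed dichotomy. (The case analysis below proves that dichotomy for \emph{any} admissable $\cK$; the extra hypothesis enters only here and in the consistency step.)

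The core observation I would prove first is: \emph{if $X,Y\in\cY$ with $X\not\subseteq Y$, then every component $C\in\cK(X)$ with $C\neq C_X(Y)$ is contained in $C_Y(X)$.} Indeed $N(C)=X$ because $\cK(X)\subseteq\CX$, so $C$ has a neighbour in the nonempty set $X\setminus Y$; that neighbour lies in $C_Y(X)$, the unique component of $G-Y$ meeting $X$ (uniqueness by principality); and $C$ is connected and disjoint from $Y$, being a component of $G-X$ distinct from the only one, $C_X(Y)$, that $Y$ can meet. The consequence I want is: if moreover $C_X(Y)\notin\cK(X)$, then $\bigcup\cK(X)\subseteq C_Y(X)$, and since $C_Y(X)$ is disjoint from $Y$ and from every other component of $G-Y$ this yields $Y\cup\bigcup(\cC_Y\setminus\{C_Y(X)\})\subseteq V(G)\setminus\bigcup\cK(X)=B_X$, whereas $A_X\subseteq Y\cup C_Y(X)$ because $X\setminus Y\subseteq C_Y(X)$. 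In other words, $\tlsep{X}\le\lsep{C_Y(X)}{Y}$.

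Now I would fix distinct $X,Y\in\cY$. If $X$ and $Y$ are incomparable, admissability lets us assume, after swapping $X$ and $Y$, that $C_X(Y)\notin\cK(X)$, so $\tlsep{X}\le\lsep{C_Y(X)}{Y}$ by the observation; an immediate comparison of the defining vertex sets of $\lsep{C_Y(X)}{Y}$ and $\tsep{Y}$ gives $\lsep{C_Y(X)}{Y}\le\tlsep{Y}$ when $C_Y(X)\in\cK(Y)$ (the second alternative) and $\lsep{C_Y(X)}{Y}\le\trsep{Y}$ otherwise (so $\tlsep{X}\le\trsep{Y}$, the first alternative). If instead $X\subsetneq Y$, then $C_Y(X)$ is undefined and I would apply the observation with the roles of $X$ and $Y$ exchanged: here no forbidden component can occur, since every $C\in\cK(Y)$ is disjoint from $X\subseteq Y$, so $\bigcup\cK(Y)\subseteq C_X(Y)$ outright and $\tlsep{Y}\le\lsep{C_X(Y)}{X}$; comparing $\lsep{C_X(Y)}{X}$ with $\tsep{X}$ yields $\tlsep{Y}\le\lsep{C_X(Y)}{X}\le\tlsep{X}$ if $C_X(Y)\in\cK(X)$ and $\tlsep{Y}\le\trsep{X}$ otherwise, which after renaming $X$ and $Y$ are again the two listed alternatives. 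In every case $\tsep{X}$ and $\tsep{Y}$ turn out nested, so under $\emptyset\subsetneq\cK(X)\subsetneq\cC_X$ the separations $\tsep{X}$ form a regular tree set.

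Finally, for consistency of $\{\tlsep{X}:X\in\cY\}$, suppose $\trsep{X}\le\tlsep{Y}$ for some distinct $X,Y\in\cY$; I would run each of the four possibilities above for $\{X,Y\}$ into a contradiction. Combined with $\trsep{X}\le\tlsep{Y}$, the inequality $\tlsep{X}\le\trsep{Y}$ (or $\tlsep{Y}\le\trsep{X}$) forces $\tlsep{X}=\trsep{Y}$ (or $\tlsep{Y}=\trsep{X}$), impossible because the separators $X$ and $Y$ differ; while $\tlsep{X}\le\tlsep{Y}$ forces $\trsep{Y}\le\tlsep{Y}$, i.e.\ $B_Y\subseteq A_Y$, and $\tlsep{Y}\le\tlsep{X}$ forces $\trsep{X}\le\tlsep{X}$, i.e.\ $B_X\subseteq A_X$, both impossible. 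Hence $\{\tlsep{X}:X\in\cY\}$ is a consistent orientation of the tree set. The step I expect to be the main obstacle is the core observation together with its consequence: distilling from $\cK(X)\subseteq\CX$ and principality the clean fact that, once admissability has removed the single bad component $C_X(Y)$, the whole of $\bigcup\cK(X)$ lands inside the one component $C_Y(X)$ of $G-Y$. Everything downstream of that is formal.
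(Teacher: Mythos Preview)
The paper does not prove Theorem~\ref{TreeSetAPC}: it is quoted verbatim as \cite[Theorem~5.11]{DistinguishUltrafilterTangles} and used as a black box, so there is no proof here to compare your attempt against.

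That said, your proposed argument is correct and essentially self-contained. The core observation---that for $X\not\subseteq Y$ every $C\in\cK(X)$ with $C\neq C_X(Y)$ lands inside $C_Y(X)$---is exactly the right leverage point, and your derivation of it from $N(C)=X$ together with principality is clean. One small remark on presentation: when you invoke the observation in the case $X\subsetneq Y$ with roles swapped, the component $C_Y(X)$ is undefined (since $X\subseteq Y$ meets no component of $G-Y$), so strictly speaking the hypothesis ``$C\neq C_Y(X)$'' is vacuous rather than ``no forbidden component can occur''; you handle this correctly in the body but the phrasing could be tightened. Similarly, in the consistency step your four cases are fine, but it is worth making explicit that the contradictions in cases~2 and~4 (namely $B_Y\subseteq A_Y$ and $B_X\subseteq A_X$) genuinely require the extra hypothesis $\cK(\cdot)\subsetneq\cC_{(\cdot)}$; you note this in passing but it is the one place regularity is used, so it deserves a flag. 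Otherwise the plan is sound and would yield an independent proof of the cited result.
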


Suppose now that $\cY$ is a \principal\ collection of vertex sets of a graph $G$ and that $\cK$ is an \admissable\ function for $\cY$ satisfying $\emptyset\subsetneq\cK(X)\subsetneq\cC_X$ for all $X\in\cY$.
If $T$ is the regular tree set $\{\,\tsep{X}\mid X\in\cY\,\}$ provided by Theorem~\ref{TreeSetAPC}, then we call $T$ a \emph{\principal } tree set of $G$.
By a slight abuse of notation, we also call the triple $(T,\cY,\cK)$ a \principal\ tree set.
In this context, we write $O_\cK$ for the consistent orientation $\{\,\tlsep{X}\mid X\in\cY\,\}$ of~$T$.

\begin{corollary}\label{cor: no dominating star implies principal tree set}
Let $G$ be any connected graph and let $U\subseteq V(G)$ be any vertex set. 
If $U$ is tough in $G$, then there is a principal tree set $(T,\crit(G),\cK)$ of $G$ satisfying the following two conditions:
\begin{enumerate}
    \item no element of $\cK(X)$ meets $U$ for any critical vertex set $X$;
    \item $\cK(X)$ is a cofinite subset of $\CX$ for every critical vertex set $X$.
\end{enumerate} 
\end{corollary}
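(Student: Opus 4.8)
The plan is to obtain the \principal\ tree set by applying Theorems~\ref{principalAdmissableFunctionExists} and~\ref{TreeSetAPC} to the collection $\crit(G)$, after trimming the \admissable\ function these theorems produce so that it avoids $U$. First I would note that $\crit(G)$ is a \principal\ collection of vertex sets of $G$: no two vertices of a critical vertex set $X$ can be separated by a finite vertex set, since they are joined by infinitely many independent paths (one through each component in $\CX$), so for every finite $Y$ the vertices of $X\setminus Y$ all lie in one component of $G-Y$, whence $X$ meets at most one component of $G-Y$; applying this to the $Y\in\crit(G)$ shows that each critical vertex set is $\crit(G)$-\principal. Hence Theorem~\ref{principalAdmissableFunctionExists} provides a strongly \admissable\ function $\cK_0$ for $\crit(G)$; in particular $\vert\,\CX\setminus\cK_0(X)\,\vert\le 1$ for every $X\in\crit(G)$.

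Next I would use the hypothesis that $U$ is tough in $G$. Then no critical vertex set of $G$ lies in the closure of $U$, so for every $X\in\crit(G)$ the collection $\cC(X)=\{\,C\in\CX\mid C\cap U\neq\emptyset\,\}$ is finite. Since $\CX$ is infinite whereas $\cK_0(X)$ omits at most one of its members, the set $\cK_0(X)\setminus\cC(X)$ is infinite; choose some $D_X$ in it and put
\[
    \cK(X):=\cK_0(X)\setminus\bigl(\cC(X)\cup\{D_X\}\bigr)\qquad\text{for }X\in\crit(G).
\]
Then $\CX\setminus\cK(X)$ is the union of a set of size at most one with the finite set $\cC(X)\cup\{D_X\}$, so $\cK(X)$ is a cofinite, and in particular infinite and hence nonempty, subset of $\CX$; this is condition~(ii). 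Moreover $D_X\notin\cK(X)$ gives $\cK(X)\subsetneq\CX$, and no member of $\cK(X)$ meets $U$ by construction, which is condition~(i).

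It remains to check that $\cK$ is \admissable\ for $\crit(G)$ and then to read off the tree set. Admissibility passes to pointwise subfunctions: for incomparable $X,Y\in\crit(G)$ we have $C_X(Y)\notin\cK_0(X)$ or $C_Y(X)\notin\cK_0(Y)$ as $\cK_0$ is \admissable, and since $\cK(X)\subseteq\cK_0(X)$ and $\cK(Y)\subseteq\cK_0(Y)$ the same disjunction holds with $\cK$ in place of $\cK_0$. Thus $\cK$ is an \admissable\ function for $\crit(G)$ with $\emptyset\subsetneq\cK(X)\subsetneq\CX$ for all $X$, so Theorem~\ref{TreeSetAPC} shows that the separations $\tsep{X}$ form a regular tree set $T$ of which $O_\cK$ is a consistent orientation. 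Hence $(T,\crit(G),\cK)$ is a \principal\ tree set of $G$, and by the previous paragraph it satisfies~(i) and~(ii), which is what the corollary claims.

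I do not foresee a real obstacle here; the only point calling for a little care is the bookkeeping in the second step, namely that $\cK(X)$ stays strictly between $\emptyset$ and $\CX$ once we delete from $\cK_0(X)$ both the finitely many components meeting $U$ and one spare component $D_X$. This is exactly where toughness of $U$ enters, via the finiteness of each $\cC(X)$ together with $\CX$ being infinite for critical $X$; the companion observation that shrinking an \admissable\ function preserves admissibility is routine.
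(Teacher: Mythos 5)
Your argument is correct and follows essentially the same route as the paper: check that $\crit(G)$ is principal, take the strongly admissable function from Theorem~\ref{principalAdmissableFunctionExists}, shrink it by the finitely many components meeting $U$ (which is where toughness enters), observe that admissibility is inherited by pointwise subfunctions, and apply Theorem~\ref{TreeSetAPC}. Your extra removal of the spare component $D_X$ to guarantee the strict inclusion $\cK(X)\subsetneq\cC_X$ is a harmless (indeed slightly more careful) refinement of the paper's version and changes nothing else.
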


\begin{proof}
As $U$ is tough in $G$, for every critical vertex set $X$ of $G$ only finitely many components in $\CX$ meet $U$;
we write $\cF_X$ for this finite collection. Theorem~\ref{principalAdmissableFunctionExists} yields a strongly \admissable\ function $\cK$ for the collection $\crit(G)$ of all the critical vertex sets of $G$.
We alter this function by removing $\cF_X$ from $\cK(X)$ for all $X$.
Then $\cK$ is still \admissable\ for $\crit(G)$, and $\cK(X)$ is a cofinite subcollection of $\CX\setminus\cF_X$ for all $X$.
Now Theorem~\ref{TreeSetAPC} says that the separations $\tsep{X}$ with $X$ critical form a tree set, and that the oriented separations $\tlsep{X}$ form a consistent orientation of this tree~set.
\end{proof}

\begin{proof}[{Proof of Theorem~\ref{thm: tough vertex set tough subgraph}}]
If $H$ is a tough subgraph of $G$ covering $U$, then $U$ is tough in $H$; in particular, $U$ is tough in $G$. Conversely, we need to show that for every vertex set $U\subseteq V(G)$ that is tough in $G$ there is a tough subgraph of $G$ containing~$U$. 
By Corollary~\ref{cor: no dominating star implies principal tree set} we find a principal tree set $(T,\crit(G),\cK)$ so that, for every critical vertex set $X$, no element of $\cK(X)$ meets $U$ and $\cK(X)$ is a cofinite subset of~$\CX$.
We write $B$ for the part of $T$ that is defined by $O_\cK$. Note that $U$ is included in $B$.

First we claim that the torso of the part $B$ is tough.
To see this, consider any finite vertex set $X\subset B$.
Only finitely many components of $G-X$ meet $B$: indeed, if infinitely many components of $G-X$ meet $B$, then by the pigeonhole principle we deduce that a subset $X'$ of $X$ is critical in $G$ with infinitely many components in $\CC{X'}$ meeting $B$.
But then $\bigcup\cK(X')$ must meet $B$, contradicting that $B$ is the part of $T$ that is defined by $O_\cK=\{\, \tlsep{X}\mid X\in \crit(G)\,\}$.
Thus $G-X$ has only finitely many components meeting $B$.
By Lemma~\ref{restrictconnectedsets} each of these components induces a component of the torso minus $X$, and so deleting $X$ from the torso results in at most finitely many components.

The tough torso of the part $B$, however, usually is not a subgraph of $G$. 
And the part $B$ usually will not induce a tough subgraph of $G$.
That is why as our next step, we construct a subgraph $H$ of $G$ that imitates the torso of $B$ to inherit its toughness.
More precisely, we obtain $H$ from  $G[B]$ by adding a subgraph $L$ of $G$ that has the following three properties:

\begin{enumerate}[label=(L\arabic*)]
    \item\label{LlocFin} Every vertex of $L-B$ has finite degree in $L$.
    \item\label{LfinYfinComps} For every finite $X\subset B$ only finitely many components of $L-X$ avoid $B$. 
    \item\label{LemulatesTorso} If $x$ and $y$ are distinct vertices in $B$ that lie together in a critical vertex set of $G$, then $L$ contains a $B$-path between $x$ and $y$.
\end{enumerate}

Before we begin the construction of $L$, let us verify that any $L$ satisfying these three properties really gives rise to a tough subgraph $H=G[B]\cup L$.
For this, consider any finite vertex set $X\subset V(H)$.
By \ref{LlocFin} every vertex of $H-B$ has finite degree in $H$, and hence deleting it produces only finitely many new components.
Therefore we may assume that $X$ is included in $B$ entirely.
Every component of $H-X$ avoiding $B$ is a component of $L-X$ avoiding $B$, and there are only finitely many such components by \ref{LfinYfinComps}.
Hence it remains to show that there are only finitely many components of $H-X$ that meet $B$.
We already know that the torso of $B$ is tough, so deleting $X$ from it results in at most finitely many components.
Then property \ref{LemulatesTorso} ensures that each of these finitely many components has its vertex set included in a component of $H-X$.
And hence there can only be finitely many components of $H-X$ that meet $B$.

Finally, we construct a subgraph $L\subseteq G$ satisfying the three properties (L1), (L2) and (L3).
Choose $(\,\{x_\alpha,y_\alpha\}\,)_{\alpha<\kappa}$ to be a transfinite enumeration of the collection of all unordered pairs $\{x,y\}$ where $x$ and $y$ are distinct vertices in $B$ that lie together in a critical vertex set of $G$.
Then we recursively construct $L$ as a union $L=\bigcup_{\alpha<\kappa}P_\alpha$ where at step $\alpha$ we choose $P_\alpha$ 
from among all $B$-paths $P$ in $G$ between $x_\alpha$ and $y_\alpha$ 
so as to minimize the number $\vert E(P)\setminus E(\bigcup_{\xi<\alpha}P_{\xi})\vert$ of new edges.
(There is a $B$-path in $G$ between $x_\alpha$ and $y_\alpha$ since $x_\alpha$ and $y_\alpha$ lie together in some critical vertex set $X$ of $G$ and $\cK(X)\subset\CX$ is non-empty.) 

We verify that our construction yields an $L$ satisfying \ref{LlocFin}, \ref{LfinYfinComps} and \ref{LemulatesTorso}.

\ref{LlocFin}.
For this, fix any vertex $\ell\in L-B$.
It suffices to show that the edges of $L$ at $\ell$ simultaneously extend to an $\ell$--$B$ fan in $L$.
To see that this really suffices, use that $\ell$ is not contained in $B$ to find some critical vertex set $X$ of $G$ with $\ell\in\bigcup\cK(X)$.
Then the $\ell$--$B$ fan at $\ell$ extending the edges of $L$ at $\ell$ must have all its $\ell$--$B$ paths pass through the finite $X$, and so there can be only finitely many such paths, meaning that $\ell$ has finite degree in $L$.

Now to find the $\ell$--$B$ fan we proceed as follows.
For every edge $e$ of $L$ at $\ell$ we write $\alpha(e)$ for the minimal ordinal $\alpha$ with $e\in E(P_\alpha)$.
Then we write $P_e$ for $P_{\alpha(e)}$, and we write $Q_e$ for the $\ell$--$B$ subpath of $P_e$ containing $e$.
The paths $Q_e$ form an $\ell$--$B$ fan, as we verify now.
For this, we show that, if $e\neq e'$ are two distinct edges of $L$ at $\ell$, then $Q_e$ and $Q_{e'}$ meet precisely in $\ell$.
Let $e$ and $e'$ be given.
We abbreviate $\alpha(e)=\alpha$ and $\alpha(e')=\alpha'$.
If $\alpha=\alpha'$ then $Q_e\cup Q_{e'}=P_\alpha$ and we are done.
Otherwise $\alpha<\alpha'$, say.
Then we assume for a contradiction that $\mathring{\ell}Q_{e'}$ does meet $\mathring{\ell}Q_e$.
Without loss of generality we may assume that $Q_{e'}$ starts in $\ell$ and ends in $y_{\alpha'}$.
We let $t$ be the last vertex of $Q_{e'}$ in $\mathring{\ell}Q_e$.
But then the graph $x_{\alpha'} P_{e'}\ell\cup \ell Q_e t P_{e'} y_{\alpha'}$ is connected and meets $B$ precisely in the two vertices $x_{\alpha'}$ and $y_{\alpha'}$.
Consequently, it contains a $B$-path $P$ between $x_{\alpha'}$ and $y_{\alpha'}$.
But then $P$ avoids the edge $e'$, so the inclusion $E(P)\setminus E(\bigcup_{\xi<\alpha'}P_{\xi})\subset E(P_{e'})\setminus E(\bigcup_{\xi<\alpha'}P_{\xi})$ must be proper.
Therefore, $P$ contradicts the choice of $P_{\alpha'}$ as desired.

\ref{LfinYfinComps}. 
For this, fix any finite vertex set $X\subset B$.
Let $\cC$ be the set consisting of all the components of $L-X$ that avoid $B$.
And let $F$ consist of all the edges inside components from $\cC$ and all the edges of $L$ between components from $\cC$ and $X$, i.e., $F=E(\bigcup\cC)\cup E_L(\bigcup\cC,X)$. 
As every component from $\cC$ meets some edge from $F$ it suffices to show that $F$ is finite, a fact that we verify as follows. 
Every edge in $F$ lies on a path $P_\alpha$, and since $P_\alpha$ is a $B$-path between $x_\alpha$ and $y_\alpha$ we deduce $\{x_\alpha,y_\alpha\}\in [X]^2$.
Thus the finite edge sets of the paths $P_\alpha$ with $\{x_\alpha,y_\alpha\}\in [X]^2$ cover $F$.
Since $X$ is finite so is $[X]^2$, and hence there are only finitely many such paths, meaning that $F$ is finite.

\ref{LemulatesTorso}. This property holds by construction.

As (L1), (L2) and (L3) are now verified we conclude that $L$ is as desired, which completes the proof of our first main result.
\end{proof}

\section{Star-decompositions}\label{section:undomStarStarDecomposition}

\noindent In this section we prove our second main result, a duality theorem for undominating stars in terms of star-decompositions, Theorem~\ref{StarDecomposition} below.

Before we state the theorem, let us recall the following definitions from~\cite[Section~3.5]{StarComb1StarsAndCombs}.
A finite-order separation $\sep{X}{\cC}$ of a graph $G$ is \emph{tame} if for no $Y\subset X$ both $\cC$ and $\cC_X\setminus\cC$ contain infinitely many components whose neighbourhoods are precisely equal to~$Y$.
The tame separations of $G$ are precisely the finite-order separations of $G$ that respect the critical vertex sets:
\begin{lemma}[{\cite[Lemma~3.15]{StarComb1StarsAndCombs}}]\label{lemma: char tame via crit vx sets}
A finite-order separation $\{A,B\}$ of a graph $G$ is tame if and only if every critical vertex set $X$ of $G$ together with all but finitely many components from $\CX$ is contained in one side of $\{A,B\}$.
\end{lemma}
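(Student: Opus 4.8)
The plan is to prove both directions by contraposition, which turns the claim into: the separation $\{A,B\}$ is \emph{not} tame if and only if some critical vertex set $X$ of $G$ \emph{lacks} the property that $X$ together with all but finitely many members of $\CX$ lies on one side of $\{A,B\}$. Write $S:=A\cap B$ for the separator, which is finite because $\{A,B\}$ has finite order, and use freely the elementary facts that $G$ has no edge between $A\setminus B$ and $B\setminus A$, and hence that every $G$-neighbour of a vertex in $A\setminus B$ lies in $A$ (symmetrically for $B$). I would open with two observations. First, every critical vertex set $X$ lies entirely in $A$ or entirely in $B$: since $\CX$ is infinite, any two vertices of $X$ are joined by infinitely many independent paths through distinct members of $\CX$, so they are not separated by the finite set $S$. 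Second, $S$ meets only finitely many of the pairwise disjoint graphs in $\CX$, so all but finitely many members of $\CX$ avoid $S$ and, being connected, each of those is contained in $A\setminus B$ or in $B\setminus A$.

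For the implication ``not (the condition) $\Rightarrow$ not tame'', let $X$ be a critical vertex set violating the condition. I would first argue that $X\subseteq S$. If not, then by the first observation we may assume $X\subseteq A$, so $X$ has a vertex $a\in A\setminus B$; but then every member of $\CX$ that avoids $S$ contains a neighbour of $a$, hence meets $A$, hence lies in $A\setminus B$, so all but finitely many members of $\CX$ lie in $A$, and together with $X\subseteq A$ this satisfies the condition---a contradiction. So $X\subseteq S$. Violating the condition now means that infinitely many members of $\CX$ fail to be contained in $A$ (so meet $B\setminus A$) and infinitely many fail to be contained in $B$ (so meet $A\setminus B$); discarding the finitely many members of $\CX$ that meet $S$, I obtain infinitely many members of $\CX$ contained in $A\setminus B$ and infinitely many contained in $B\setminus A$. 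Since $X\subseteq S$, each such member is in fact a component of $G-S$---a component of $G-S$ containing it would be connected and disjoint from $X$, hence equal to it---with $G$-neighbourhood exactly $X$. Setting $Y:=X\subseteq S$, I have produced on each of the two sides of $\{A,B\}$ infinitely many components of $G-S$ with $G$-neighbourhood exactly $Y$, so $\{A,B\}$ is not tame.

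For the converse ``not tame $\Rightarrow$ not (the condition)'', let $Y\subseteq S$ witness non-tameness: each of the two sides of $\{A,B\}$ contains infinitely many components of $G-S$ with $G$-neighbourhood exactly $Y$. Any such component $D$ is connected, disjoint from $Y$, and has all its neighbours in $Y$, so $D$ is a component of $G-Y$ with $G$-neighbourhood exactly $Y$; thus $D\in\CC{Y}$ and $Y$ is critical. But each such $D$ lies in $A\setminus B$ or in $B\setminus A$ according to its side, so infinitely many members of $\CC{Y}$ are not contained in $A$ and infinitely many are not contained in $B$. Hence neither $A$ nor $B$ contains all but finitely many members of $\CC{Y}$, and since $Y\subseteq S$ already lies in both $A$ and $B$, the critical vertex set $Y$ violates the condition.

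The only genuine obstacle I anticipate is the bookkeeping: one must keep three vertex sets distinct---the separator $S$ of $\{A,B\}$, the critical set $X$ whose members of $\CX$ we are tracking, and the candidate ``small side'' $Y$ of a non-tame configuration---and, tied to this, one must check carefully that under the hypothesis $X\subseteq S$ the passage between components of $G-X$ and components of $G-S$ preserves both connectedness and the exact $G$-neighbourhood $X$. Everything else reduces to the pigeonhole principle on the finite set $S$ and the fact that a connected subgraph disjoint from a separator lies wholly on one of its two sides.
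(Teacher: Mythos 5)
Your proof is correct, and it follows the natural route that the cited source ([SC1, Lemma~3.15]) takes as well: reduce to the case $X\subseteq A\cap B$ via the observation that a critical vertex set cannot be split by a finite separator, and then translate between components of $G-X$ and components of $G-(A\cap B)$ using that these components have neighbourhood exactly $X$. (The present paper only quotes the lemma without reproducing its proof, so there is nothing further to compare against.)
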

An $\Sinf$-tree $(T,\alpha)$ is \emph{tame} if all the separations in the image of $\alpha$ are tame.
As a consequence of Lemma~\ref{lemma: char tame via crit vx sets}, if $X$ is a critical vertex set of $G$ and $(T,\alpha)$ is a tame $\Sinf$-tree, then $X$ induces a consistent orientation of the image of $\alpha$ by orienting every tame finite-order separation $\{A,B\}$ towards the side that contains $X$ and all but finitely many of the components from~$\CX$.
This consistent orientation, via $\alpha$, also induces a consistent orientation of $\vE(T)$.
Then, just like for ends, the critical vertex set $X$ either \emph{lives} at a unique node $t\in T$ or \emph{corresponds} to a unique end of~$T$.
As usual, these definitions for $\Sinf$-trees carry over to tree-decompositions.

\begin{mainresult}\label{StarDecomposition}
\TFAD
\begin{enumerate}
    \item $G$ contains an undominating star \at $U$;
    \item $G$ has a tame star-decomposition such that $U$ is contained in the central part and every critical vertex set of $G$ lives in a leaf's part.
\end{enumerate}
\end{mainresult}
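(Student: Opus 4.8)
The plan is to show that (i) and (ii) partition the class of pairs $(G,U)$. Since Lemma~\ref{lem: undominating star and tough} tells us that $\neg$(i) is equivalent to $U$ being tough in $G$ --- equivalently, by Lemma~\ref{lem: list}, to no critical vertex set of $G$ lying in the closure of $U$ --- it suffices to prove that (ii) implies $U$ is tough in $G$ and that, conversely, $U$ being tough in $G$ implies (ii). For the first implication, assume (ii) and let $X$ be any critical vertex set; say $X$ lives in the part $V_\ell$ of a leaf $\ell$, and let $\{A,B\}$ be the separation of $G$ induced by the edge joining $\ell$ to the centre $c$, with $V_\ell$ its $\ell$-side. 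By definition of $X$ living at $\ell$, all but finitely many components in $\CX$ have their vertex set inside $V_\ell$. Any such component $C$ is disjoint from the other components in $\CX$ and satisfies $C\cap U\subseteq C\cap V_c\subseteq V_\ell\cap V_c$, because $U$ lies in the central part; and $V_\ell\cap V_c$ is finite since the decomposition is tame. Hence only finitely many such $C$ meet $U$, so only finitely many components in $\CX$ meet $U$, i.e.\ $X$ does not lie in the closure of $U$. As $X$ was arbitrary, $U$ is tough in $G$, which is $\neg$(i).

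For the converse, assume $U$ is tough in $G$. Corollary~\ref{cor: no dominating star implies principal tree set} provides a principal tree set $(T,\crit(G),\cK)$ in which, for every critical vertex set $X$, the collection $\cK(X)$ is a cofinite subset of $\CX$ all of whose members avoid $U$. Write $s_X=\tsep{X}$ for the associated separations and $O_\cK=\{\,\tlsep{X}\mid X\in\crit(G)\,\}$ for the distinguished consistent orientation. As the separations in $T$ all have finite order, $T$ induces a tree-decomposition of $G$ (an $\Sinf$-tree) whose parts are precisely the parts of $T$ and whose separations are precisely the $s_X$; let $b$ be its node corresponding to $O_\cK$, with part $B$. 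Since every member of every $\cK(X)$ avoids $U$, we have $U\subseteq B$. I would then pass to the star-decomposition obtained by contracting, in the underlying tree, every edge not incident with $b$: contracting edges of a tree-decomposition again yields a tree-decomposition, this one is a star with centre $b$, its central part is still $B\supseteq U$, and its separations are precisely those $s_X$ whose single flip in $O_\cK$ stays consistent.

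It remains to check that this star-decomposition is tame and that every critical vertex set lives in a leaf's part. By Lemma~\ref{lemma: char tame via crit vx sets} it suffices for tameness to verify that each $s_X=\{A_X,B_X\}$ (with $B_X$ the side containing $U$) is tame, i.e.\ that every critical $Y$ together with all but finitely many components in $\CY$ lies on one side of $s_X$. For $Y=X$ this is immediate from $\cK(X)$ being cofinite in $\CX$. For $Y\neq X$ one reads it off from the trichotomy of Theorem~\ref{TreeSetAPC} applied to $X$ and $Y$: if $\tlsep{X}\le\trsep{Y}$ then $A_Y\subseteq B_X$, so $Y$ and all of $\cK(Y)$ lie on the $B_X$-side; if $\tlsep{X}\le\lsep{C_Y(X)}{Y}\le\tlsep{Y}$ then $C_Y(X)\in\cK(Y)$ and $B_X\supseteq Y\cup\bigcup(\cK(Y)\setminus\{C_Y(X)\})$, so $Y$ together with the still cofinite collection $\cK(Y)\setminus\{C_Y(X)\}$ lies on the $B_X$-side; the remaining two cases --- with the roles of $X$ and $Y$ interchanged --- are handled symmetrically and confine $Y$ together with a cofinite subcollection of $\CY$ to one side of $s_X$ as well. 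Once tameness of all $s_X$, and hence of the star-decomposition, is established, each critical vertex set $X$ induces a consistent orientation of $T$ (and so of the contracted star tree set). As $\cK(X)$ is cofinite in $\CX$ but no member of $\cK(X)$ lies on the $B_X$-side of $s_X$, this orientation disagrees with $O_\cK$ on $s_X$, so it is not the orientation of the centre $b$. A star has no ends, so $X$ lives at some node of it, which must then be a leaf --- concretely, the leaf into which the node that $X$ lived at, or the end that $X$ corresponded to, in the original tree was contracted.

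I expect the main obstacle to be precisely this last part: the tameness of the separations $s_X$ together with the verification that the contraction never strands a critical vertex set at the centre. In other words, one has to show that replacing the naive, in general non-nested separations $(A_X,B_X)$ by the tree-set separations $\tsep{X}$ produced via Theorems~\ref{principalAdmissableFunctionExists} and~\ref{TreeSetAPC} genuinely respects all critical vertex sets, and that collapsing the induced tree-decomposition to a star keeps every critical vertex set inside a leaf rather than merging it into the central part. The tree-set apparatus built for Theorem~\ref{thm: tough vertex set tough subgraph} is exactly what makes this possible, whereas Lemma~\ref{restrictconnectedsets} and the toughness of the torso of $B$ are not needed here.
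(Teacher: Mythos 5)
Your first half (that (ii) forces $U$ to be tough, hence $\neg$(i)) is fine and matches the paper. The converse direction, however, has a genuine gap at the step ``$T$ induces a tree-decomposition of $G$ \dots\ let $b$ be its node corresponding to $O_\cK$''. A tree set of finite-order separations is isomorphic to the edge tree set of a tree only if it is regular and has no $(\omega+1)$-chain (this is Theorem~\ref{KneipTreeSets}, which the paper must invoke for exactly this reason), and even when the tree exists, a consistent orientation such as $O_\cK$ may correspond to an \emph{end} of that tree rather than to a node. The paper's own example in Section~\ref{section:undomStarToughSubgraph} (the graph on $\N\cup\bigcup_n A_n$ with $U=\N$) already defeats your construction: there the separations $\tlsep{X}$ for $X=\{0,\dots,n\}$ form a strictly increasing $\omega$-chain, $O_\cK$ orients them all ``upwards'', the decomposition tree is a ray whose end is where $O_\cK$ lives, and there is no node $b$ to contract onto; contracting the whole ray yields the trivial decomposition, which does not put the critical vertex sets into leaves. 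For the same reason your description of the star's separations as ``those $s_X$ whose single flip stays consistent'' (i.e.\ the maximal elements of $O_\cK$) fails: a corridor of $O_\cK$ need not have a maximum.

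This is precisely the difficulty that the machinery of Section~\ref{section:undomStarStarDecomposition} is built to overcome. One first passes from $O_\cK$ to its \Onion\ (suprema of corridors of the bounded-order layers $O_{\le n}$, which by Lemma~\ref{corridorsAssertMaxima} still have finite order), applies Theorem~\ref{thm: Peeling the onion} to represent each corridor by an $\Sinf$-tree, and then, for corridors that still define an end, folds the corresponding ray into a star via Carmesin's trick $\vs_{\!\gamma}^{\srs n}\wedge\sv_{\sls\gamma}^{n-1}$. The folded separations are \emph{not} elements of the original tree set, and a critical vertex set $X$ is in general only dominated in the weaker sense $\tlsep{X}\tle\vr$, which deliberately allows one component of $\cK(X)$ to end up on the central side; your final paragraph, which needs $\tlsep{X}\le\vr$ for some star separation $\vr$ to conclude that $X$ lives at a leaf, would not survive this. (Two smaller omissions: the case of finite $U$ must be treated separately, since the technical construction needs the part of $O_\cK$ to be infinite to guarantee regularity; and tameness of the final star-decomposition must be checked for the folded separations, not only for the $\tsep{X}$ themselves.) Your tameness computation for the separations $\tsep{X}$ via Theorem~\ref{TreeSetAPC} is correct as far as it goes, but it does not apply to the separations the construction actually has to output.
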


The proof of this theorem is organised as follows.
First, we state without proof a technical theorem, Theorem~\ref{TechnicalStarDecomposition} below, and then show how it implies our main result, Theorem~\ref{StarDecomposition} above.
In a last step we prepare and provide the proof of the technical theorem.

Note that the part of a star $\sigma$ of separations of a graph $G$ is \mbox{$\bigcap\,\{\,B\mid (A,B)\in\sigma\,\}$}.
Given two oriented separations $\vs_{\! 1},\vs_{\! 2}$ of $G$ we write $\vs_{\! 1}\tle\vs_{\! 2}$ if either \mbox{$\vs_{\! 1}\le\vs_{\! 2}$} or there is a component $C\in\cC$ for $\lsep{\cC}{X}=\vs_{\! 1}$ such that \mbox{$\lsep{\cC\setminus\{C\}}{X}\le\vs_{\! 2}$}.
Here is the technical theorem:

\begin{theorem}\label{TechnicalStarDecomposition}
Let $G$ be any graph, and let $(T, \cY,\cK)$ be any \principal\ tree set so that $O_\cK$ defines an infinite part.
Then $G$ admits a star $\sigma$ of finite-order separations such that the following two conditions hold:
\begin{enumerate}
    \item the part defined by $O_\cK$ is included in the part of $\sigma$;
    \item for every $\vs\in O_\cK$  there is some $\vr\in \sigma$ with $\vs\tle\vr$.
\end{enumerate}
\end{theorem}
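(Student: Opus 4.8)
The plan is to build the star $\sigma$ by hand, node by node, using the part $B$ defined by $O_\cK$ as the central part and producing one separation of $\sigma$ for each ``boundary component'' that escapes from $B$. Write $B$ for the part defined by $O_\cK$; since $B$ is infinite by hypothesis, it makes sense to work with it as a genuine central part. For each $X\in\cY$ we have $\tlsep{X}\in O_\cK$, which points towards $B$, so $B\subseteq \cK(X)^c$-side $=B_X$ in the notation $\tlsep{X}=(A_X,B_X)$. The obstacle is exactly the one flagged in the Theorem~\ref{thm: tough vertex set tough subgraph} discussion: the naive candidate $\bigcap_X G[B_X]$ can collapse $B$ to an edgeless induced subgraph, and the corresponding family of separations $\{\tlsep{X}\}$ need not be a star (different $X$'s overlap in complicated ways). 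So the construction must \emph{coarsen} these separations so that their ``small sides'' become pairwise disjoint while keeping $B$ inside every big side.

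\textbf{Key steps.}
\emph{Step 1 (identify the pieces outside $B$).} Consider the components of $G-B$ — more precisely, for each vertex $v\notin B$ there is some $X\in\cY$ with $v\in\bigcup\cK(X)$, i.e.\ $v$ lies in the small side $A_X\setminus X$; so $G-B=\bigcup_{X\in\cY}\bigl(\bigcup\cK(X)\bigr)$. Group the vertices of $G-B$ into the components $C$ of $G-B$. For each such component $C$, let $N(C)$ be its (finite) neighbourhood: finiteness follows because $C$ is contained in $\bigcup\cK(X)$ for some $X$, and $\cK(X)$-components have neighbourhood contained in the finite set $X$; more carefully, one shows $N(C)\subseteq X$ for a suitable $X$ by taking $X\in\cY$ minimal (in the tree-set order translated via Theorem~\ref{TreeSetAPC}) with $C\subseteq\bigcup\cK(X)$, so $N(C)$ is finite and contained in $B$. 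This gives, for each component $C$ of $G-B$, a finite-order separation $s_C=\bigl(N(C)\cup V(C),\, V(G)\setminus V(C)\bigr)$ oriented towards $B$, and its inverse $\vr_C=\bigl(V(G)\setminus V(C),\,N(C)\cup V(C)\bigr)$ pointing away.

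\emph{Step 2 (check $\sigma:=\{\,s_C : C\text{ a component of }G-B\,\}$ is a star).} The separations $s_C$ are pairwise nested since the small sides $V(C)$ are pairwise disjoint vertex sets, so for distinct $C,C'$ we get $s_C\le \vr_{C'}$; hence $\sigma$ is a nested, indeed a star: every two of its elements point away from each other, because the ``big side'' $V(G)\setminus V(C)$ contains $V(C')$ together with its neighbourhood. One must also discard degenerate/trivial members (finitely or infinitely many), which is harmless.

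\emph{Step 3 (verify (i)).} The part of $\sigma$ is $\bigcap_C \bigl(V(G)\setminus V(C)\bigr)=V(G)\setminus\bigcup_C V(C)=B$ exactly — so in fact (i) holds with equality, giving $B\subseteq$ the part of $\sigma$.

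\emph{Step 4 (verify (ii)).} Fix $\vs\in O_\cK$, say $\vs=\tlsep{X}=(A_X,B_X)$ with small side $A_X\setminus X=\bigcup\cK(X)$. The components of $G-B$ that meet $A_X$ are exactly the components of $G-B$ contained in $\bigcup\cK(X)$; by condition $\emptyset\subsetneq\cK(X)\subsetneq\cC_X$ and $X$-principality there are only finitely many components $C$ of $G[\bigcup\cK(X)]$ with $N(C)=X$, while all other components $C$ of $G-B$ inside $\bigcup\cK(X)$ have $N(C)\subsetneq X$ — and crucially all but at most finitely many of them must then be single components already among those indexed by $\sigma$. Pick the (unique, after the finitely-many exceptional-component surgery encoded in the relation $\tle$) separation $\vr=\vr_C\in\sigma$ whose small side $V(C)$ swallows $\bigcup\cK(X)$ up to finitely many further $\cK(X)$-components $C_1,\dots,C_k$; then $\lsep{\cK(X)\setminus\{C_1,\dots,C_k\}}{X}\le\vr$, which is precisely the defining inequality for $\vs\tle\vr$. (If it happens that $\cK(X)$ is contained in a single component $C$ of $G-B$ with $N(C)=X$, then already $\vs\le\vr_C$ and we are in the first alternative of $\tle$.) This is where the relation $\tle$, rather than $\le$, is unavoidable: a genuinely critical $X$ may have infinitely many $\cK(X)$-components each forming its own component of $G-B$, so no single $\vr\in\sigma$ has all of $\bigcup\cK(X)$ on its small side, but after deleting the finitely many $\cK(X)$-components that are ``merged'' across $B$ by paths, exactly one does.

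\textbf{Main obstacle.}
The delicate point is Step 4: controlling how the components of $G[\bigcup\cK(X)]$ reassemble into components of $G-B$. A priori a single component of $G-B$ could run through many different small sides $\bigcup\cK(X)$, and the neighbourhood of a component of $G-B$ could fail to be finite if the tree set were badly behaved; ruling this out requires using Theorem~\ref{TreeSetAPC} (nestedness of the $\tsep{X}$) to show that each component of $G-B$ sits inside $\bigcup\cK(X)$ for a \emph{single} $X$ that can be chosen $\le$-minimal, so that its neighbourhood is finite and the counting argument for ``all but finitely many'' goes through. I expect this bookkeeping — matching up the $\tle$-relation with the coarsened star on the nose — to be the bulk of the work; Steps 1–3 are essentially formal once the finiteness of neighbourhoods is in hand.
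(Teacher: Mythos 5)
Your construction fails at Step 4, and the failure is not a bookkeeping issue but a structural one. The part $B$ defined by $O_\cK$ is the intersection of the big sides, so the components of $G-B$ can be strictly \emph{finer} than the small sides $\bigcup\cK(X)$: for a critical vertex set $X$ whose separator happens to lie in $B$, the (infinitely many) components in $\cK(X)$ are pairwise non-adjacent and are joined to each other only through $X\subseteq B$, so each of them shatters into its own component(s) of $G-B$. Then no single $\vr\in\sigma$ can witness $\tlsep{X}\tle\vr$, because $\tle$ permits deleting only \emph{one} component from $\cK(X)$ (not ``finitely many $C_1,\dots,C_k$'' as you write, and certainly not infinitely many), while your $\vr$'s each have only a single component of $G-B$ on their small side. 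The paper's own example from Section~\ref{section:undomStarToughSubgraph} is a concrete counterexample: take $G$ on $\N\cup\bigcup_n A_n$ with each $A_n$ joined completely to $\{0,\dots,n\}$, $\cY=\crit(G)=\{\,\{0,\dots,n\}\mid n\in\N\,\}$, and $\cK(X_n)$ a cofinite set of singletons of $A_n$. Here $B\supseteq\N$ is infinite, every component of $G-B$ is a single vertex $a\in A_n$, so your $\sigma$ consists of the separations $(\{a\}\cup X_n,\,V\setminus\{a\})$; but $\tlsep{X_n}$ has an infinite set $\bigcup\cK(X_n)$ on its small side, and even after removing one component this cannot fit below any such $\vr$. (By contrast, the correct star here simply keeps each $\tlsep{X_n}$ whole.) There is also a secondary gap in Step 1: your argument that $N(C)$ is finite presupposes that each component $C$ of $G-B$ lies inside $\bigcup\cK(X)$ for a \emph{single} $X$, which needs proof and is delicate because the separators $X$ need not be contained in $B$.

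This is precisely why the paper does not build $\sigma$ from the components of $G-B$. Instead it coarsens $O_\cK$ upwards rather than downwards: it passes to the parliament $\onion(O_\cK)$ (suprema of corridors of the bounded-order layers, which stay finite-order by Lemma~\ref{corridorsAssertMaxima} and nested by Lemma~\ref{supremaAreNested}), realises each corridor as the edge tree set of a tree via Theorem~\ref{KneipTreeSets}, and takes either the corridor's maximum or Carmesin's folded separations $\vs_{\!\gamma}^{\srs n}\wedge\sv_{\sls\gamma}^{n-1}$ along a ray. The final case analysis there is exactly what guarantees that each $\tlsep{X}$ lies $\tle$-below a member of the star after discarding at most one component of $\cK(X)$ — the step your approach cannot recover.
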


\noindent The technical theorem implies our second main result, Theorem~\ref{StarDecomposition}:

\begin{proof}[Proof of Theorem~\ref{StarDecomposition}]
First, we show that at most one of (i) and (ii) holds.
By Lemma~\ref{lem: list} we know that
if $G$ contains an undominating star \at $U$, then $G$ has a critical vertex $X$ that lies in the closure of $U$.
But then $X$ lives in a leaf's part of the star-decomposition provided by~(ii), and it follows that this part does contain infinitely many vertices from~$U$, contradicting that $U$ is contained in the central part and that the separations of the star-decomposition are finite.

Now, to show that at least one of (i) and (ii) holds, we show $\neg$(i)$\to$(ii).
If $U$ is finite, then the star $\{\,\lsep{\cC_U(Y)}{Y}\mid Y\in 2^U\setminus\{\emptyset\}\,\}$ gives the desired star-decomposition with central part equal to~$U$, where $\cC_U(Y)$ is the collection of all components $C\in\cC_U$ with $N(C)=Y$.
Otherwise $U$ is infinite.
By Lemma~\ref{lem: list} we know that $U$ is tough in $G$.
Then, by Corollary~\ref{cor: no dominating star implies principal tree set}, we find a \principal\ tree set $(T,\crit(G),\cK)$ such that, for every critical vertex set $X$, no element of $\cK(X)$ meets $U$ and the inclusion $\cK(X)\subset\CX$ is cofinite.
We claim that the star provided by Theorem~\ref{TechnicalStarDecomposition} gives a star-decomposition of $G$ meeting the requirements of (ii), a fact that can be verified as follows: 
First, the separations of the form $(\cK(X),X)$ with $X$ critical and $\cK(X)$ a cofinite subset of $\CX$ are tame and thus our star-decomposition is tame. 
Next, by Theorem~\ref{TechnicalStarDecomposition}~(i), we have that $U$ is contained in the central part of the star-decomposition. Finally, by Theorem~\ref{TechnicalStarDecomposition}~(ii), every critical vertex set of $G$ lives in a leaf's part.
\end{proof}

Next, we prepare the proof of our technical theorem, Theorem~\ref{TechnicalStarDecomposition}. 
First, we will need the following theorem by Kneip. 
A chain $\mathcal{C}$ in a given poset is said to have \emph{order-type} $\alpha$ for an ordinal $\alpha$ if $\mathcal{C}$ with the induced linear order is order-isomorphic to $\alpha$. The chain $\mathcal{C}$ is then said to be an $\alpha$-\emph{chain}.

\begin{theorem}[{\cite[Theorem~1]{TreelikeSpaces}}]\label{KneipTreeSets}
A tree set is isomorphic to the edge tree set of a tree if and only if it is regular and contains no $(\omega+1)$-chain.
\end{theorem}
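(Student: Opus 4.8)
I would prove the two implications separately, and the forward one is routine. If $\vec S=\vec E(T)$ is the edge tree set of a tree $T$, then each separation is the partition $(V_t,V_u)$ of $V(T)$ into the two components of $T-e$ for an edge $e=tu$ oriented towards $u$; since $t\in V_t\setminus V_u$, no separation is small, so $\vec S$ is regular. And since $\vec e<\vec f$ in $\vec E(T)$ forces $e$ and $f$ onto a common path of $T$, oriented the same way along it, every chain of $\vec E(T)$ is carried by a single finite or infinite path, so its order type embeds into $\mathbb Z$ --- in particular it is not $\omega+1$.

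For the converse, let $\vec S$ be a regular tree set without an $(\omega+1)$-chain. I would begin with an order-theoretic observation: since $\vec S$ is invariant under reversal, the hypothesis also forbids the reverse order type, and a short argument then shows that every chain of $\vec S$ embeds into $\mathbb Z$; in particular, below any $\vec s$ every strictly ascending chain is finite, so each $\vec r<\vec s$ lies below a separation that is immediately below $\vec s$. Next I would build a tree $T$. Call a consistent orientation $O$ of $\vec S$ \emph{closed} if every element of $O$ lies below a maximal element of $O$; nestedness together with consistency forces the maximal elements of a closed orientation to form a star. Take $V(T)$ to be the set of closed orientations, and join $O\ne O'$ whenever they disagree on exactly one separation $s$, writing $e_s$ for this edge.

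I would then verify four things. \emph{Enough vertices:} for each $\vec s$, the orientation $\{\vec r:\vec r\le\vec s\}\cup\{\vec r:\vec r\le\overleftarrow s,\ r\ne s\}$ is closed and contains $\vec s$, and flipping $s$ in it yields a closed orientation containing $\overleftarrow s$ and differing from it only on $s$, so $e_s$ is an edge of $T$; here consistency is routine from the absence of trivial and of small separations, and closedness needs the order-theoretic observation applied to the separations below $\overleftarrow s$. \emph{Finite distances:} for closed $O\ne O'$ the set $O\setminus O'$ is finite --- iterating ``lies below a maximal element'' and using consistency of $O'$ shows that $O\setminus O'$ lies below a single maximal element $\vec q$ of $O$, dually its reversal lies below a single maximal element $\vec p$ of $O'$, so $O\setminus O'\subseteq[\overleftarrow p,\vec q]$, and this interval has no infinite chain (else an $\omega+1$-chain) and no two incomparable elements (two such, with $\overleftarrow p$ or $\vec q$, would force a small separation), hence is a finite chain. \emph{Connectedness:} flipping the unique maximal element of $O$ lying above $O\setminus O'$ reaches a closed neighbour of $O$ strictly closer to $O'$, so induct on $|O\setminus O'|$. \emph{Acyclicity:} on a cycle the separation of each edge would have to be flipped back later, forcing a repeated edge. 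Thus $T$ is a tree, and $\vec s\mapsto\vec e_s$, where $\vec e_s$ is $e_s$ oriented towards the endpoint containing $\vec s$, is a reversal-preserving bijection $\vec S\to\vec E(T)$; it is an order isomorphism because $\vec s\le\vec t$ holds in $\vec S$ if and only if every closed orientation containing $\vec t$ contains $\vec s$ --- the nontrivial implication witnessed by the orientations constructed above --- which is exactly $\vec e_s\le\vec e_t$ in $\vec E(T)$.

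I expect the main obstacle to be the choice of vertex set together with the first two items above: that every separation is realised by an edge between closed orientations, and that closed orientations lie at finite mutual distance. This is precisely where the hypotheses are used --- the absence of an $(\omega+1)$-chain stops a prospective vertex from running off along an infinite ascending chain with no maximum, and regularity kills the small separations that an overlong interval $[\overleftarrow p,\vec q]$ would otherwise create.
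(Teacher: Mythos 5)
The paper does not prove this statement: it is imported verbatim from Kneip's paper (Theorem~1 of the cited reference) and used as a black box, so there is no in-paper argument to compare yours against. Judged on its own, your sketch is a correct outline of essentially the known proof. The forward direction is fine. For the converse, your order-theoretic reduction is correct: any infinite interval would contain an $\omega$- or $\omega^*$-sequence and hence, using an endpoint and the involution, an $(\omega+1)$-chain, so all intervals are finite and chains embed into $\mathbb Z$. Your choice of nodes --- consistent orientations in which every element lies below a maximal one, with the maximal elements forming a star --- and the ``differ in exactly one separation'' adjacency is the standard construction, and the two places you identify as the crux are indeed where both hypotheses enter. Your arguments there are sound: a single maximal element $\vec q$ of $O$ above all of $O\setminus O'$ exists because two such would, via the star of maximal elements, contradict the consistency of $O'$; and $[\,\overleftarrow{p},\vec q\,]$ is a chain (incomparability would force $\overleftarrow{p}$ to be small, contradicting regularity) and finite (else an $(\omega+1)$-chain). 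Two details a full write-up should include: that flipping the maximal element of $O\setminus O'$ preserves closedness (an infinite ascending chain of the new orientation above a fixed element would have to stay below the flipped separation and so yield an $(\omega+1)$-chain), and that each separation labels at most one edge of $T$ (two distinct pairs of closed orientations differing exactly in $s$ would realise all four orientation patterns of some nested pair $\{r,s\}$ among consistent orientations, which is impossible); the latter is what makes your ``flip back'' acyclicity argument and the bijectivity of $s\mapsto e_s$ go through.
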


Besides this theorem, we will need the following concept of a corridor from~\cite{DistinguishUltrafilterTangles}.
Suppose that $(\vT,{\le},{}^\ast)$ is a tree set, and that $O$ is a consistent orientation of $\vT$.
A \emph{corridor} of $O$ is an equivalence class of separations in $O$, where two separations $\vs_{\! 1},\vs_{\! 2}\in O$ are considered equivalent if there is $\vr\in O$ with $\vs_{\! 1},\vs_{\! 2}\le\vr$, cf.~\cite[Lemma~7.1 and Definition~7.2]{DistinguishUltrafilterTangles}.
As corridors are consistent partial orientations of tree sets on the one hand, and directed posets on the other hand, they come with a number of useful properties.

The \emph{supremum} $\sup L$ of a set $L$ of oriented separations of a graph is the oriented separation $(A,B)$ with $A=\bigcup\,\{\,C\mid (C,D)\in L\,\}$ and $B=\bigcap\,\{\,D\mid (C,D)\in L\,\}$.

\begin{lemma}\label{supremaAreNested}
Let $T$ be any regular tree set of separations of any graph $G$, let $O$ be any consistent partial orientation of $T$ and let $\gamma$ be any corridor of $O$. 
Then the supremum of $\gamma$ is nested with $\vT$.
\end{lemma}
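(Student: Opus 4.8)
The goal is to show that $\sup\gamma$ is nested with every separation in $\vT$. The plan is to fix an arbitrary $\vt\in\vT$ and distinguish cases according to how the separations in the corridor $\gamma$ relate to $\vt$ and its inverse $\tv$. Since $\gamma$ is a directed subset of the tree set $T$, every $\vs\in\gamma$ is nested with $\vt$, so each $\vs$ satisfies one of the four relations $\vs\le\vt$, $\vs\le\tv$, $\tv\le\vs$, $\vt\le\vs$. The first observation is that these four possibilities cannot all be genuinely mixed in an uncontrolled way, precisely because $\gamma$ is directed: if some $\vs_1\in\gamma$ satisfies $\tv\le\vs_1$ and some $\vs_2\in\gamma$ satisfies $\vt\le\vs_2$, then taking an upper bound $\vr\in\gamma$ with $\vs_1,\vs_2\le\vr$ we get both $\vt\le\vr$ and $\tv\le\vr$, so $\vr$ is degenerate or trivial, contradicting that $T$ is a (regular) tree set. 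So I would organise the argument around which of the two "large" relations, if any, is witnessed in $\gamma$.

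\textbf{Main case analysis.} If \emph{no} separation $\vs\in\gamma$ satisfies $\vt\le\vs$ or $\tv\le\vs$ — equivalently every $\vs\in\gamma$ has $\vs\le\vt$ or $\vs\le\tv$ — then writing $\sup\gamma=(A,B)$ with $A=\bigcup\{C:(C,D)\in\gamma\}$ and $B=\bigcap\{D:(C,D)\in\gamma\}$, I claim $(A,B)\le\vt$ or $(A,B)\le\tv$ holds, which gives nestedness immediately. For this one uses directedness again: the subset of $\gamma$ pointing "below $\vt$" and the subset pointing "below $\tv$" cannot both be nonempty and cofinal, since an upper bound $\vr\in\gamma$ of one representative from each would have to lie below both $\vt$ and $\tv$, again forcing $\vr$ trivial or degenerate. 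Hence, after possibly swapping $\vt$ with $\tv$, \emph{every} $\vs\in\gamma$ satisfies $\vs\le\vt$, and then $A=\bigcup C\subseteq$ the left side of $\vt$ while the right side of $\vt$ is contained in each $D$, hence in $B$; so $(A,B)\le\vt$. In the remaining case some $\vs_0\in\gamma$ satisfies $\vt\le\vs_0$ (the case $\tv\le\vs_0$ being symmetric after swapping). Then for the cofinal part of $\gamma$ above $\vs_0$ we still have $\vt\le\vs$, so $\vt\le\sup\gamma$ follows by taking unions and intersections over a cofinal subchain, and in particular $\sup\gamma$ is nested with $\vt$. The separations $\vs\in\gamma$ not above $\vs_0$ are nested with $\vs_0$ and bounded by something above $\vs_0$, so they cause no trouble; the key point is only that $\sup$ is computed over the whole directed set and the cofinal part dominates it.

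\textbf{Where the work is.} The conceptual content is minimal, but two technical points need care. First, one must verify that the supremum $(A,B)$ as defined really is an oriented separation of $G$ — i.e. that $A\cup B=V(G)$ and $A\cap B$ separates — which is where regularity of $T$ and the fact that $\gamma$ consists of finite-order separations could in principle matter; I would check that $A\cap B$ is contained in (a limit of) the finite separators, using that $\gamma$ is a chain-like directed set so the separators behave monotonically along cofinal chains. Second, and this is the step I expect to be the real obstacle, one must be careful that "$\vs\le\vt$ for a cofinal subset of $\gamma$" genuinely upgrades to "$\sup\gamma\le\vt$": this is a standard fact about suprema of directed families of separations, but it uses that a separation $\vt$ is determined by its sides and that $\le$ for separations is exactly the componentwise order on (left side, reversed right side). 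I would cite or reprove this monotonicity of $\sup$ under $\le$, then feed in the case distinction above. Everything else — the exclusion of the "both large relations occur" configuration and of the "both small relations are cofinal" configuration — is a direct appeal to directedness of the corridor together with the defining property of a tree set (no trivial, no degenerate elements), as recorded in the paragraph on corridors preceding the lemma.
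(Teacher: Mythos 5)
Your proposal is correct in substance and follows essentially the same route as the paper: fix a separation $r$ of $T$, observe that if some orientation of $r$ lies below some element of $\gamma$ then it lies below $\sup\gamma$, and otherwise use directedness of the corridor together with regularity to force a single orientation of $r$ to lie above all of $\gamma$ and hence above $\sup\gamma$. One sentence needs repair, though: an upper bound $\vr\in\gamma$ of representatives $\vs_{\!1}\le\vt$ and $\vs_{\!2}\le\tv$ does not itself ``lie below both $\vt$ and $\tv$''; rather, by the case hypothesis $\vr$ lies below one of them, say $\vr\le\vt$, whence $\vs_{\!2}\le\vr\le\vt$ and $\vs_{\!2}\le\tv$, so $\vs_{\!2}$ is \emph{small} --- and smallness (excluded by regularity), not triviality or degeneracy, is the property your contradictions actually rely on throughout.
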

\begin{proof}
Consider any unoriented separation $r\in T$.
If there is a separation $\vs\in\gamma$ such that $r$ has an orientation $\vr$ with $\vr\le\vs$, then $\vr\le\vs\le\sup\gamma$ as desired.
As $T$ is nested, $r$ has for every separation $\vs\in\gamma$ an orientation $\vr(\vs)$ such that either $\vr(\vs)\le\vs$ or $\vs\le\vr(\vs)$.
By our first observation, we may assume that $\vs\le\vr(\vs)$ for all $\vs\in\gamma$.
It suffices to show that $\vr(\vs_{\! 1})=\vr(\vs_{\! 2})$ for all $\vs_{\! 1},\vs_{\! 2}\in\gamma$, since then $r$ has one orientation that lies above all elements of $\gamma$ and, in particular, above the supremum of $\gamma$.
Given $\vs_{\! 1},\vs_{\! 2}\in\gamma$ consider any $\vs_{\! 3}\in\gamma$ with $\vs_{\! 1},\vs_{\! 2}\le\vs_{\! 3}$.
Then $\vs_{\! 1},\vs_{\! 2}\le\vs_{\! 3}\le\vr(\vs_{\! 3})$.
As $T$ is regular, $\vr(\vs_{\! 3})=\vr(\vs_{\! 1})=\vr(\vs_{\! 2})$ follows.
\end{proof}

\begin{lemma}\label{SupremaOfCorridorsFormStar}
Let $T$ be any tree set of separations of any graph $G$ and let $O$ be any consistent orientation of $T$.
Then the suprema of the corridors of $O$ form a star.
\end{lemma}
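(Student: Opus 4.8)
The statement to prove is Lemma~\ref{SupremaOfCorridorsFormStar}: that the suprema of the corridors of a consistent orientation $O$ of a tree set $T$ form a star. Recall that a star (of separations) is a set of separations that are pairwise nested \emph{and} pairwise point \emph{away} from one another, i.e.\ for any two distinct members $\vs_1,\vs_2$ of the set we have $\vs_1\le\vs_2^{\,\ast}$. So there are two things to check: first, that the suprema of distinct corridors are pairwise nested; second, that they point away from each other in the required sense. I would also note at the outset that $T$ is a tree set, hence regular in the sense needed (or, if the paper's convention allows small separations, one first reduces to the regular case exactly as the surrounding text suggests), so that Lemma~\ref{supremaAreNested} applies verbatim.

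**Step 1: nestedness.** Let $\gamma_1\neq\gamma_2$ be two corridors of $O$ and put $\vr_i:=\sup\gamma_i$. By Lemma~\ref{supremaAreNested} applied to $O$ (which is in particular a consistent partial orientation of $T$), the separation $\vr_1$ is nested with every member of $T$; the same holds for $\vr_2$. This is not quite enough on its own, since $\vr_1$ and $\vr_2$ need not lie in $T$. But I would argue that $\vr_1$ is nested with $\vr_2$ directly: write $\vr_1=\sup\gamma_1$ with $A$-side $\bigcup\{C:(C,D)\in\gamma_1\}$, and similarly for $\vr_2$. Since each element of $\gamma_2$ is nested with $\vr_1$ (that is exactly the content of Lemma~\ref{supremaAreNested}, since $\gamma_2\subset O\subset T$ as a set of oriented separations of elements of $T$), and nestedness of one fixed separation with a whole set is preserved under taking the supremum of that set — because the supremum's sides are unions/intersections of the sides of the set's members — one gets that $\vr_1$ is nested with $\vr_2$. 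I would spell out this last inference carefully: if $\vr_1$ is comparable (in the $\le$ of oriented separations) with \emph{some} orientation of each $\vs\in\gamma_2$, then by the argument already used in the proof of Lemma~\ref{supremaAreNested} one orientation of $\vr_1$ lies above all of $\gamma_2$, hence above $\sup\gamma_2=\vr_2$, or else below $\vr_2$; in any case $\vr_1$ and $\vr_2$ are comparable, so nested.

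**Step 2: pointing away from each other.** This is the step I expect to be the crux. Fix distinct corridors $\gamma_1,\gamma_2$ and, using Step~1, choose the orientation so that, say, $\vr_1\le\vr_2$ or $\vr_1\le\vr_2^{\,\ast}$. I want to rule out $\vr_1\le\vr_2$. Suppose $\vr_1\le\vr_2$. I claim then there exist $\vs_1\in\gamma_1$ and $\vs_2\in\gamma_2$ with $\vs_1\le\vs_2$: indeed, pick any $\vs_2\in\gamma_2$; the relation $\vr_1\le\vr_2=\sup\gamma_2$ together with regularity and the fact that $\gamma_2$ is directed should let me "descend" and find $\vs_1\in\gamma_1$ below some element of $\gamma_2$ — this uses that a corridor is a \emph{directed} partial orientation, so its elements eventually lie above any fixed separation below its supremum, combined with the definition of the supremum of $\gamma_1$. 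Once I have $\vs_1\le\vs_2$ with $\vs_i\in\gamma_i$: but then $\vs_1$ and $\vs_2$ are in the \emph{same} corridor, since two elements of $O$ are corridor-equivalent iff they have a common upper bound in $O$, and $\vs_2$ itself (being in $O$) is such a common bound, giving $\gamma_1=\gamma_2$, a contradiction. Hence $\vr_1\le\vr_2^{\,\ast}$, i.e.\ $\vr_1$ and $\vr_2$ point away from each other, as a star requires.

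**Main obstacle and remaining care.** The genuinely delicate point is the "descent" in Step~2: extracting, from $\sup\gamma_1\le\sup\gamma_2$, a witnessing pair $\vs_1\in\gamma_1$, $\vs_2\in\gamma_2$ with $\vs_1\le\vs_2$, without which the corridor-collision argument does not fire. I would handle this by working on the $A$-sides: $A(\vr_1)=\bigcup\{A(\vs):\vs\in\gamma_1\}\subset A(\vr_2)=\bigcup\{A(\vs):\vs\in\gamma_2\}$, pick a separation $\vs_1\in\gamma_1$, note $\vs_1$ is nested with $\vr_2$ and with every $\vs\in\gamma_2$ (Lemma~\ref{supremaAreNested} / tree-set nestedness), so $\vs_1$ is comparable with each $\vs\in\gamma_2$; if $\vs_1\le\vs$ for some $\vs\in\gamma_2$ we are done, and otherwise $\vs\le\vs_1$ for all $\vs\in\gamma_2$, whence $\vr_2=\sup\gamma_2\le\vs_1\le\vr_1$, forcing $\vr_1=\vr_2$ and then (again via the common-upper-bound characterisation of corridors) $\gamma_1=\gamma_2$, contradiction. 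The only other thing to watch is the degenerate/trivial-element bookkeeping — since $T$ is a tree set it has no trivial or degenerate elements, and one should check the suprema are likewise non-degenerate so that calling the resulting set a "star" is legitimate; this is immediate because each $\vr_i$ dominates a non-trivial $\vs_i\in T$, so $A(\vr_i)\supsetneq$ is proper on the relevant side exactly as regularity of $T$ guarantees.
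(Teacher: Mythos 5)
There is a genuine gap, and it sits exactly where the content of the lemma lies. Throughout Step~2 and the ``main obstacle'' paragraph you treat \emph{nested} as if it meant \emph{comparable}: from ``$\vs_1$ is nested with every $\vs\in\gamma_2$'' you conclude ``either $\vs_1\le\vs$ for some $\vs\in\gamma_2$, or $\vs\le\vs_1$ for all $\vs\in\gamma_2$''. That dichotomy is false. Nestedness of two oriented separations $\vs_1$ and $\vs$ allows four relations, and you have omitted the two involving the inverse, namely $\vs_1\le\sv$ and $\sv\le\vs_1$. These are precisely the cases that matter: $\sv\le\vs_1$ is the case excluded by the \emph{consistency} of $O$ --- a hypothesis your argument never invokes anywhere, which is a structural red flag, since the lemma fails for inconsistent orientations (already for a two-element star $\{r,s\}$ with $\vr\le\sv$, the inconsistent orientation $\{\rv,\sv\}$ has two singleton corridors whose ``suprema'' do not form a star) --- while $\vs_1\le\sv$ is exactly the star relation you are trying to prove, not a case to be dismissed. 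The same incomplete case split recurs at the level of the suprema: ``$\vr_1\le\vr_2$ or $\vr_1\le\vr_2^{\,\ast}$'' is not exhaustive; the possibility $\vr_2^{\,\ast}\le\vr_1$ (the suprema pointing towards each other) is never ruled out, and ruling it out again requires consistency.

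Once the four-case analysis is done honestly, the whole contradiction scaffolding (and the appeal to Lemma~\ref{supremaAreNested}, which in any case assumes regularity that the present lemma does not) becomes unnecessary. The paper's proof is direct: for every $(A_i,B_i)\in\gamma$ and $(C_j,D_j)\in\delta$ with $\gamma\neq\delta$, nestedness in $T$ gives four cases; $(A_i,B_i)\le(C_j,D_j)$ and $(C_j,D_j)\le(A_i,B_i)$ each exhibit a common upper bound in $O$ and hence force $\gamma=\delta$; $(D_j,C_j)\le(A_i,B_i)$ contradicts consistency of $O$; so $(A_i,B_i)\le(D_j,C_j)$ for \emph{all} such pairs, and passing to unions and intersections of the sides yields $\sup\gamma\le(\sup\delta)^\ast$ immediately. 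I recommend you restructure your argument along these lines rather than attempting to repair the descent.
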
 
\begin{proof}
We have to show that for every two distinct corridors $\gamma$ and $\delta$ of $O$ the supremum $(A,B)$ of $\gamma$ and the supremum $(C,D)$ of $\delta$ satisfy $(A,B)\le (D,C)$.
Let us write $\gamma=\{\,(A_i,B_i)\mid i\in I\,\}$ and $\delta=\{\,(C_j,D_j)\mid j\in J\,\}$.
As $\gamma$ is distinct from $\delta$ we have $(A_i,B_i)\le (D_j,C_j)$ for all $i\in I$ and $j\in J$.
Hence $(A,B)=(\bigcup_i A_i,\bigcap_i B_i)\le (\bigcap_j D_j,\bigcup_{j} C_j)=(D,C)$.
\end{proof}

\begin{lemma}\label{corridorsAssertMaxima}
Suppose that $T$ is any tree set of separations of any graph $G$, that $O$ is any consistent orientation of $T$, and that $\gamma$ is any corridor of $O$.
Then every finite subset of the separator of the supremum of $\gamma$ is contained in the separator of some separation in~$\gamma$.

In particular, if the order of the separations in $\gamma$ is bounded by some natural number $n$, then the supremum of $\gamma$ has order at most~$n$.
\end{lemma}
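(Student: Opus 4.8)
The plan is to derive both statements directly from the fact, recorded just above, that a corridor is a directed poset. Write $\gamma=\{\,(A_i,B_i)\mid i\in I\,\}$, so that its supremum is the separation $(A,B)=(\bigcup_i A_i,\bigcap_i B_i)$, whose separator is $A\cap B$. Fix a finite subset $F$ of $A\cap B$; we must find a separation in $\gamma$ whose separator contains $F$.

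First I would note that the $B$-side is harmless: since $F\subset B=\bigcap_i B_i$ we automatically have $F\subset B_i$ for every $i\in I$, so it suffices to produce a single index $j$ with $F\subset A_j$. To this end, for each $v\in F$ choose $i(v)\in I$ with $v\in A_{i(v)}$, which is possible because $v\in A=\bigcup_i A_i$. Now $\{\,(A_{i(v)},B_{i(v)})\mid v\in F\,\}$ is a \emph{finite} subset of the directed poset $\gamma$, hence has an upper bound $(A_j,B_j)\in\gamma$. Then $A_{i(v)}\subset A_j$ for all $v\in F$, so $F\subset A_j$, and together with $F\subset B_j$ this yields $F\subset A_j\cap B_j$, as desired.

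For the ``in particular'' part, suppose that every separation in $\gamma$ has order at most $n$, but that the supremum $(A,B)$ has order greater than $n$ (possibly infinite). Choose $F\subset A\cap B$ with $\vert F\vert=n+1$; by the first part $F$ lies in the separator of some $(A_j,B_j)\in\gamma$, so $\vert A_j\cap B_j\vert\ge n+1>n$, a contradiction. I do not expect any genuine obstacle here: the only point that needs care is that finiteness of $F$ must be invoked precisely where directedness is used, since a directed poset provides an upper bound only for \emph{finite} subsets.
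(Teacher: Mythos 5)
Your proposal is correct and follows essentially the same argument as the paper: the $B$-side inclusion is automatic from $B=\bigcap_i B_i$, and the $A$-side is handled by choosing one separation per vertex of $F$ and taking an upper bound of this finite family inside the directed poset $\gamma$. The ``in particular'' clause is likewise the same immediate consequence.
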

\begin{proof}
Let us write $(A,B)$ for the supremum of $\gamma$ and let $Y$ be any finite subset of its separator $X:=A\cap B$.
For every vertex $y\in Y\subset A$ there is separation $(C_y,D_y)\in\gamma$ with $y\in C_y$.
Since $\gamma$ is a corridor we find a separation $(C,D)\in\gamma$ lying above all $(C_y,D_y)$.
Then $Y\subset C$ as $C$ includes all $C_y$, and $Y\subset D$ because $(C,D)\le (A,B)$ gives $Y\subset X\subset B\subset D$.
\end{proof}

Before we start with the proof of Theorem~\ref{TechnicalStarDecomposition} we need two final ingredients: induced separation systems and \Onion s.
If $\vS=(\vS,{\le},{}^\ast)$ is a separation system and $O\subset\vS$ is any subset (usually a partial orientation of~$S$), then $O$ induces a separation system $O\cup O^\ast$ that is a subsystem of $\vS$ with the partial ordering and involution induced by ${\le}$ and ${}^\ast$.
We denote this subsystem by~$\vS[O]$.

Next, we define parliaments.
Suppose that $G$ is any graph, that $\vT=(\vT,{\le},{}^\ast)$ is any regular tree set of finite-order separations of $G$, and that $O$ is any consistent orientation of~$\vT$. 
For every number $n\in\N$ let $O_{\le n}$ be the subset of $O$ formed by the oriented separations in $O$ whose separators have size at most $n$. 
Then, by Lemma~\ref{corridorsAssertMaxima}, every corridor of $O_{\le n}$ has a supremum of order at most $n$, and these suprema form a star for fixed~$n$ (cf.~Lemma~\ref{SupremaOfCorridorsFormStar}) which we denote by~$\onion_n(O)$.
The \emph{\Onion } of $O$, denoted by $\onion(O)$, is the union $\bigcup_{n\in\N}\onion_n(O)$. 
Notably, the \Onion\ of $O$ is a cofinal subset of $O\cup\onion(O)$.
The \Onion\ of $O$ induces a separation system $\vSinf [\onion (O)]$ that is a subsystem of~$\vSinf$ whose separations are all nested with each other.
Furthermore, $\vSinf [\onion (O)]$ and $\vT$ are nested with each other in $\vSinf$ by Lemma~\ref{supremaAreNested}.
Also, the \Onion\ of $O$ is a consistent orientation of $\vSinf [\onion (O)]$ where it defines the same part as $O$ does for $\vT$. 

As one might expect, the inverses of corridors of parliaments have no $\omega$-chains:

\begin{lemma}\label{inversesCorridorsParliaments}
Let $G$ be any graph, let $\vT$ be any regular tree set of finite-order separations of $G$, and let $O$ be any consistent orientation of $\vT$.
Then the inverse $\gamma^\ast$ of any corridor $\gamma$ of $\onion (O)$ has no $\omega$-chain.
\end{lemma}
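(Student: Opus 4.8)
The plan is to prove the formally stronger statement that the parliament $\onion(O)$ itself contains no infinite strictly descending sequence $\vs_0 > \vs_1 > \vs_2 > \cdots$. This suffices, since $\gamma\subseteq\onion(O)$ and any $\omega$-chain $\vr_0 < \vr_1 < \vr_2 < \cdots$ in $\gamma^\ast$ corresponds, via $\vs_i:=\vr_i^\ast\in\gamma$ and the fact that passing to inverses reverses the ordering, to such a descending sequence $\vs_0 > \vs_1 > \vs_2 > \cdots$ inside $\gamma$. (In particular the corridor hypothesis will not actually be needed.)

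Before that I would record two facts about the layers $\onion_n(O)$ of the parliament. First, each $\onion_n(O)$ is an antichain: it is a star by construction, and its members are non-small, because if $\vs=\sup\delta$ for some corridor $\delta=\{\,(A_j,B_j)\mid j\in J\,\}$ of $O_{\le n}$ were small, i.e.\ $\bigcup_j A_j\subseteq\bigcap_j B_j$, then $A_j\subseteq B_j$ for every $j$, contradicting that $(A_j,B_j)$ is a separation of the regular tree set $\vT$; and a star of non-small separations is an antichain, since if distinct $\vs,\vt$ in a star satisfied $\vt\le\vs$, then the star property would also give $\vt\le\vs^\ast$, and writing $\vs=(A,B)$, $\vt=(C,D)$ these two inequalities force $C\subseteq A\cap B$ and $D\supseteq A\cup B$, hence $C\subseteq D$, i.e.\ $\vt$ would be small. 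Second --- and this is the crux --- if $\vt<\vs$ in $\onion(O)$ with $\vt\in\onion_n(O)$ and $\vs\in\onion_m(O)$, then $n\le m$. To see this I would argue by contradiction: assume $m<n$ and write $\vt=\sup\epsilon$, $\vs=\sup\delta$ with $\epsilon$ a corridor of $O_{\le n}$ and $\delta$ a corridor of $O_{\le m}$. Since $O_{\le m}\subseteq O_{\le n}$, any upper bound witnessing equivalence of two separations in $O_{\le m}$ still lies in $O_{\le n}$, so $\delta$ is contained in a single corridor $\delta'$ of $O_{\le n}$, whence $\vs=\sup\delta\le\sup\delta'$. Moreover $\sup\delta'\neq\sup\epsilon$, as otherwise $\vs\le\sup\delta'=\sup\epsilon=\vt$, contradicting $\vt<\vs$. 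Hence $\sup\delta'$ and $\sup\epsilon$ are distinct members of the star $\onion_n(O)$, so the star property yields $\sup\delta'\le(\sup\epsilon)^\ast=\vt^\ast$. Combining, $\vt\le\vs\le\sup\delta'\le\vt^\ast$, so $\vt$ is small, contradicting the first fact.

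Granting both facts the conclusion is immediate: given an infinite descending sequence $\vs_0>\vs_1>\vs_2>\cdots$ in $\onion(O)$, pick for each $i$ some $m_i\in\N$ with $\vs_i\in\onion_{m_i}(O)$; applying the crux to $\vs_{i+1}<\vs_i$ gives $m_{i+1}\le m_i$ for all $i$, so the sequence $(m_i)_{i\in\N}$ of natural numbers is non-increasing and therefore eventually constant, say $m_i=m$ for all $i\ge i_0$. But then $\vs_{i_0}>\vs_{i_0+1}$ are two distinct comparable members of the antichain $\onion_m(O)$, which is absurd. I expect the main obstacle to be exactly the crux: the layers $\onion_n(O)$ are not nested into one another in any transparent monotone way, so the monotone behaviour of the layer index along a descending chain has to be teased out indirectly, from the refinement of the corridors of $O_{\le m}$ by those of $O_{\le n}$ (for $m<n$) together with the star property of each layer.
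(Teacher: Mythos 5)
Your proof is correct and follows essentially the same route as the paper's: both arguments rest on the observation that the layer index $n$ with $\vs\in\onion_n(O)$ can only decrease along a strictly descending chain in the parliament, which rules out an $\omega$-chain in $\gamma^\ast$. The only difference is that the paper states this monotonicity as a one-line ``note'', whereas you actually derive it (in the equivalent form ``non-increasing, plus each layer $\onion_n(O)$ is an antichain'') from the star property of the layers, the refinement of the corridors of $O_{\le m}$ by those of $O_{\le n}$, and the regularity of $\vT$.
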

\begin{proof}
Suppose for a contradiction that there is a sequence $\sv_{\sls 0}<\sv_{\sls 1}<\cdots$ of separations $\sv_{\sls n}\in\gamma^\ast$.
Note that $\vs<\vr$ with $\vs\in \onion_m(O)$ and $\vr\in \onion_n(O)$ implies $m<n$. 
Hence the function $g\colon\omega\to\omega$ assigning to each $n<\omega$ the least $k<\omega$ with $\vs_{\! n}\in\onion_k(O)$ is strictly decreasing in that $g(m)>g(n)$ for all $m<n$, contradicting that there are only finitely many natural numbers~$<g(0)$.
\end{proof}

The corridors of a \Onion\ usually stem from $\Sinf$-trees:

\begin{theorem}\label{thm: Peeling the onion}
Let $G$ be any graph, let $\vT$ be any regular tree set of finite-order separations of $G$, and let $O$ be any consistent orientation of $\vT$ such that $\vSinf [\onion(O)]$ is regular.
Then for every corridor $\gamma$ of the \Onion\ of $O$ the corresponding regular tree set $\vSinf[\gamma]$  is isomorphic to the edge tree set of a tree.
\end{theorem}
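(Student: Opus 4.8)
The plan is to verify the two hypotheses of Kneip's theorem, Theorem~\ref{KneipTreeSets}, for $\vSinf[\gamma]$. Regularity of $\vSinf[\gamma]$ comes for free: it is a nested subsystem of $\vSinf[\onion(O)]$, which is regular by assumption, and regularity already rules out trivial and degenerate separations, so $\vSinf[\gamma]$ is a regular tree set. Hence the only thing to prove is that $\vSinf[\gamma]$ contains no $(\omega+1)$-chain.

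So I would assume for a contradiction that $\vs_0<\vs_1<\dots<\vs_\omega$ is an $(\omega+1)$-chain in $\vSinf[\gamma]$ and split it according to whether its members lie in $\gamma$ or in $\gamma^\ast$. The first step is to observe that no member lying in $\gamma$ can sit strictly below a member lying in $\gamma^\ast$: if $\vs\in\gamma$ and $\vr\in\gamma^\ast$ with $\vs\le\vr$, then $\vs$ and $\vr^\ast$ are two elements of $\onion(O)$ with $\vs\le(\vr^\ast)^\ast$, which (with distinctness forced by the regularity of $\vSinf[\onion(O)]$, since $\vs=\vr^\ast$ would make $\vs$ small) contradicts consistency of the orientation $\onion(O)$. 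Consequently the members of our chain that lie in $\gamma^\ast$ form an initial segment and those in $\gamma$ form a final segment. Since $\gamma^\ast$ has no $\omega$-chain by Lemma~\ref{inversesCorridorsParliaments}, and an infinite initial segment of an $(\omega+1)$-chain always contains an $\omega$-chain, only finitely many of the $\vs_\alpha$ can lie in $\gamma^\ast$. In particular $\vs_\omega\in\gamma$ and cofinitely many $\vs_\alpha$ lie in $\gamma$, so $\gamma$ itself contains an $(\omega+1)$-chain.

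It then remains to rule this out, and here I would reuse the observation from the proof of Lemma~\ref{inversesCorridorsParliaments}: if $\vs<\vr$ in $\onion(O)$ with $\vs\in\onion_m(O)$ and $\vr\in\onion_n(O)$, then $m<n$. Thus the function $g$ sending each separation of $\onion(O)$ to the least index of a parliament layer containing it is strictly increasing along every strictly increasing chain in $\onion(O)$. Applied to an $(\omega+1)$-chain $\vt_0<\vt_1<\dots<\vt_\omega$ in $\gamma\subseteq\onion(O)$, this produces infinitely many natural numbers $g(\vt_0)<g(\vt_1)<\dots$, all strictly below the single natural number $g(\vt_\omega)$, which is absurd. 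This contradiction shows $\vSinf[\gamma]$ has no $(\omega+1)$-chain, and Theorem~\ref{KneipTreeSets} finishes the proof.

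The only genuinely delicate point is the step forbidding a jump from $\gamma$ up into $\gamma^\ast$: it is what confines all but a finite part of the chain into $\gamma$, where the layering function $g$ then does the work; everything else (that $\vSinf[\gamma]$ is a regular tree set, and that an infinite initial segment of an $(\omega+1)$-chain contains an $\omega$-chain) is routine, and the substantial facts about parliaments and corridors have already been established in Lemmas~\ref{supremaAreNested}--\ref{inversesCorridorsParliaments}. I therefore do not anticipate a real obstacle.
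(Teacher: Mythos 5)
Your overall strategy---reduce to Theorem~\ref{KneipTreeSets}, kill chains inside $\gamma$ with the layering function, and kill $\omega$-chains inside $\gamma^\ast$ with Lemma~\ref{inversesCorridorsParliaments}---is sound, and those two pieces coincide with two of the three cases of the actual proof. The flaw is in precisely the step you flag as the delicate one: consistency of $\onion(O)$ does \emph{not} forbid an element of $\gamma$ from lying strictly below an element of $\gamma^\ast$; it forbids the opposite. Recall that a set $O$ of oriented separations is inconsistent when it contains $\rv$ and $\vs$ for distinct $r,s$ with $\vr<\vs$. If $\vs\in\gamma$ and $\vr\in\gamma^\ast$ with $\vs<\vr$, then the two elements of $\onion(O)$ in play are $\vs$ and $\rv$, and for them to witness inconsistency we would need $\vr<\vs$---the reverse of what we have. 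So no contradiction arises: both separations still point towards the part of $\onion(O)$. What consistency does rule out is $\vr\in\gamma^\ast$ and $\vs\in\gamma$ with $\vr<\vs$, since then $\rv,\vs\in\onion(O)$ do witness inconsistency. Hence in an $(\omega+1)$-chain of $\vSinf[\gamma]$ it is the members of $\gamma$ that form an \emph{initial} segment and the members of $\gamma^\ast$ that form a \emph{final} segment---exactly the reverse of your conclusion.

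As a result your argument never addresses the chain with $\vs_{\! n}\in\gamma$ for all $n<\omega$ but $\vs_{\!\omega}\in\gamma^\ast$, and this is the case carrying the real content of the theorem: the layering function lives on $\gamma$ and says nothing about $\vs_{\!\omega}$, while Lemma~\ref{inversesCorridorsParliaments} needs an $\omega$-chain in $\gamma^\ast$, which here contributes only the single element $\vs_{\!\omega}$. The paper closes this case by using that $\gamma$ is a corridor, hence directed: choose $\vr\in\gamma$ with $\sv_{\omega}\le\vr$ and $\vs_{\! 0}\le\vr$, use nestedness together with regularity and consistency to show that in fact $\vs_{\! n}\le\vr$ for every $n<\omega$, and then invoke the already-established absence of $(\omega+1)$-chains in $\gamma$ to force $\vs_{\!\ell}=\vr$ for some $\ell<\omega$, which is absurd. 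Some argument of this kind, exploiting the directedness of the corridor, is indispensable; without it the proof is incomplete.
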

\begin{proof}
Let $\gamma$ be any corridor of the \Onion\  of $O$. 
By Theorem~\ref{KneipTreeSets}, it suffices to show that $\vSinf[\gamma]$ has no $(\omega+1)$-chain.
For this, suppose for a contradiction that $\vs_{\! 0}<\vs_{\! 1}<\cdots<\vs_{\! \omega}$ is an $(\omega+1)$-chain in $\vSinf[\gamma]$.

If $\vs_{\!\omega}$ lies in $\gamma$, then so do all the other $\vs_{\! n}$ as $\gamma$ is consistent.
Note that $\vs<\vr$ with $\vs\in \onion_m(O)$ and $\vr\in \onion_n(O)$ implies $m<n$.
Hence the function $f\colon\omega+1\to\omega$ assigning to each $\alpha\le\omega$ the least $n<\omega$ with $\vs_{\!\alpha}\in\onion_n(O)$ is strictly increasing in that $f(\alpha)<f(\beta)$ for all $\alpha<\beta$, contradicting $f(\omega)<\omega$.

Otherwise $\vs_{\! \omega}$ lies in $\gamma^\ast$.
If there is a number $N<\omega$ with $\vs_{\! n}\in\gamma^\ast$ for all $n\ge N$, without loss of generality $N=0$, then $\gamma^\ast$ has an $\omega$-chain contradicting Lemma~\ref{inversesCorridorsParliaments}.

Therefore, we may assume that $\vs_{\! n}\in\gamma$ for infinitely many $n<\omega$.
Since $\gamma$ is consistent, $\vs_{\! n}\in\gamma$ for all $n<\omega$ follows. 
Using that $\gamma$ is a corridor we find a separation $\vr\in\gamma$ with $\sv_{\sls\omega}\le\vr$ and $\vs_{\! 0}\le\vr$.
For every $n<\omega$, either $\vs_{\! n}\le\vr$ or $\vs_{\! n}\le\rv$ or $\sv_{\sls n}\le\vr$ or $\sv_{\sls n}\le\rv$. 
We cannot have $\vs_{\! n}\le\rv$ for any~$n$, since this would imply $\vs_{\! 0}<\vs_{\! n}\le\rv\le\sv_{\sls 0}$ contradicting that $\vSinf[\onion (O)]$ is regular.
We cannot have $\sv_{\sls n}\le\vr$ for any~$n$ because $\gamma$ is consistent.
And we cannot have $\sv_{\sls n}\le\rv$, because then $\sv_{\sls\omega}\le\vr\le\vs_{\! n}<\vs_{\!\omega}$ contradicts that $\vSinf[\onion (O)]$ is regular.
Hence $\vs_{\! n}\le\vr$ for all~$n$.
As $\gamma$ contains no $(\omega+1)$-chains by the first case, there must be an $\ell<\omega$ with $\vs_{\!\ell}=\vr$.
But this then contradicts $\vr=\vs_{\!\ell}<\vs_{\!\ell+1}\le\vr$, completing the proof that $\vSinf[\gamma]$ has no $(\omega+1)$-chains.
\end{proof}

Finally, we prove our technical theorem:

\begin{proof}[{Proof of Theorem~\ref{TechnicalStarDecomposition}}]
Let $(T_{\cK},\cY,\cK)$ be any \principal\ tree set of a connected graph $G$ so that $O_{\cK}$ defines an infinite part.
We let $O$ be the \Onion\ of $O_{\cK}$.
Then the tree set $\vSinf[O]$ is regular: for every $n\in\N$ and every $(A,B)\in\onion_n(O_{\cK})\subset O$ we have that $A\setminus B$ contains the non-empty vertex set of the graph $\bigcup\cK(X)$ for some $X\in\cY$, and $B\setminus A$ contains all but at most $\vert A\cap B\vert\le n$ of the infinitely many vertices of the infinite part defined by~$O$.
Therefore, by Theorem~\ref{thm: Peeling the onion} we find for every corridor $\gamma$ of $O$ an $\Sinf$-tree $(T_\gamma,\alpha_\gamma)$ such that $\alpha_\gamma$ is an isomorphism between the edge tree set $\vE(T_\gamma)$ of $T_\gamma$ and $\vSinf[\gamma]$.

In a first step, we will use the $\Sinf$-trees $(T_\gamma,\alpha_\gamma)$ to define stars $\sigma_\gamma$, one for every corridor $\gamma$ of $O$, such that their union $\sigma=\bigcup_\gamma\sigma_\gamma$ is a candidate for the star that we seek.
Then, in a second step, we will verify that $\sigma$ is indeed as desired, completing the proof.

First step.
We define stars $\sigma_\gamma$, one for each corridor $\gamma$ of $O$, such that their union $\sigma:=\bigcup_\gamma \sigma_\gamma$ is a candidate for the star that we seek.
For this, consider any corridor $\gamma$ of $O$.
Then $\gamma$, as it orients the image of $\alpha_\gamma$ consistently, defines either a node or an end of $T_\gamma$ (see Section~2.8 of~\cite{StarComb1StarsAndCombs}).

If $\gamma$ defines a node $t$ of $T_\gamma$, then $t$ has precisely one neighbour in $T_\gamma$.
Indeed, $\gamma$~is the down-closure in $\vSinf [\gamma]$ of the star $\alpha_\gamma(\vF_{\!t})$ where $\vF_{\! t}=\{\,(e,s,t)\in\vE(T_\gamma)\mid e=st\in T_\gamma\,\}$. 
Note that all separations in $\alpha_\gamma(\vF_{\! t})$ are maximal in $\gamma$.
Hence, if $t$ has two distinct neighbours $k_1$ and $k_2$ in $T_\gamma$, then $\gamma$ contains a separation $\vr$ that lies above both $\alpha_\gamma(k_1,t)$ and $\alpha_\gamma(k_2,t)$, contradicting the maximality in the corridor $\gamma$ of at least one of these two separations (here we also use that  $\alpha_\gamma(k_1,t)$ and $\alpha_\gamma(k_2,t)$ are distinct for distinct neighbours $k_1$ and $k_2$ of $t$ because $\alpha_\gamma$ is injective).
Therefore, $t$~is a leaf of $T_\gamma$.
Call its neighbour~$k$.
Then $\alpha_\gamma(k,t)$ is the maximal element of the corridor $\gamma$, and we let $\sigma_\gamma:=\{\,\alpha_\gamma (k,t)\,\}$.

Otherwise $\gamma$ defines an end of $T_\gamma$ from which we pick a ray $R_\gamma=v_\gamma^0 v_\gamma^1\ldots$ all whose edges are oriented forward by $\gamma$ in that $\vs_{\!\gamma}^{\srs n}:=\alpha_\gamma(v_\gamma^n,v_\gamma^{n+1})$ lies in $\gamma$ for all~$n\in\N$.
Then we let
\begin{align}\label{eq:tauGammaFromRay}
    \sigma_\gamma:=\{\,\vs_{\!\gamma}^{\srs 0}\,\}\cup\{\,\vs_{\!\gamma}^{\srs n}\wedge\sv_{\sls\gamma}^{n-1}:\, n\ge 1\,\}.
\end{align}
(See Figure~\ref{fig: folding separations}.)
\begin{figure}[h]
\centering
\def\svgwidth{\columnwidth}
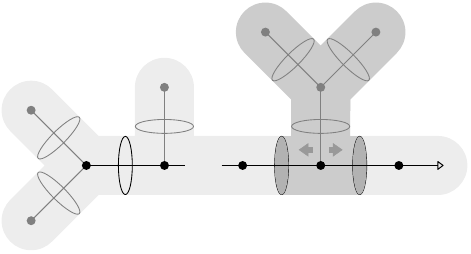
\caption{The light grey area depicts $B\setminus A$, the grey area depicts $A\setminus B$ and the dark grey area depicts $A\cap B$ of the separation $(A,B):=\vs_{\!\gamma}^{\srs n}\wedge\sv_{\sls\gamma}^{n-1}$ from the proof of Theorem~\ref{TechnicalStarDecomposition}.}
\label{fig: folding separations}
\end{figure}

 Let us check that $\sigma_\gamma$ really is a star. On the one hand, it follows from $\vs_{\!\gamma}^{\srs 0}\le\vs_{\!\gamma}^{\srs n-1}$ that $\vs_{\!\gamma}^{\srs 0}\le\sv_{\sls\gamma}^{n}\vee\vs_{\!\gamma}^{\srs n-1}=(\vs_{\!\gamma}^{\srs n}\wedge\sv_{\sls\gamma}^{n-1})^\ast$ for all $n\ge 1$.
And on the other hand, for $1\le n<m$, we infer from $\vs_{\!\gamma}^{\srs n-1}\le\vs_{\!\gamma}^{\srs n}\le\vs_{\!\gamma}^{\srs m-1}\le\vs_{\!\gamma}^{\srs m}$ that
\begin{align*}
    \vs_{\!\gamma}^{\srs m}\wedge\sv_{\sls\gamma}^{m-1}\le\sv_{\sls\gamma}^{m-1}\le\sv_{\sls\gamma}^{n}\le \sv_{\sls\gamma}^{n}\vee\vs_{\!\gamma}^{\srs n-1}= (\vs_{\!\gamma}^{\srs n}\wedge\sv_{\sls\gamma}^{n-1})^\ast.
\end{align*}
Since all $\vs_{\!\gamma}^{\srs n}$ have finite order, so do the infima of which~$\sigma_\gamma$ is composed.
This technique of turning a ray into a star of separations has been introduced by Carmesin~\cite{carmesin2014all} in  his `Proof that Lemma~6.8 implies Lemma~6.7'.

Second step. We prove that $\sigma$ is as desired. 
First, we show condition (i), 
which states that the part defined by $O_{\cK}$ is included in the part of $\sigma$. 
For every separation $\vs\in\sigma$ there is some separation $\vr\in O$ satisfying $\vs\le\vr$. Hence the part of $\sigma$ includes the part of $O$, which in turn includes the part of $O_{\cK}$ because $O$ is the \Onion\ of~$O_{\cK}$.

It remains to verify condition (ii), which states that for every $\tlsep{X}\in O_\cK$ there is some $\vs\in \sigma$ with $\tlsep{X}\tle\vs$.
For this, let any vertex set $X\in\cY$ be given.
As $O$ is cofinal in $O_{\cK}\cup O$, there is a separation $\vs_{\! X}\in O$ above $\tlsep{X}$.
Let $\gamma$ be the corridor of $O$ containing $\vs_{\! X}$.
We check the following two cases.

In the first case, $\sigma_\gamma$ is a singleton, formed by the maximal element $\vs$ of $\gamma$, giving
\begin{align*}
    \tlsep{X}\le\vs_{\! X}\le\vs\in\sigma.
\end{align*}

In the second case, $\sigma_\gamma$ is of the form~(\ref{eq:tauGammaFromRay}).
Then, as $O$ is nested with $T_{\cK}$, the separation $\tlsep{X}$ induces a consistent orientation of the image of $\alpha_\gamma$, as follows.
The orientation consists of all $\vr\in\vSinf[\gamma]$ that satisfy either $\vr\le\tlsep{X}$ or $\tlsep{X}<\rv$.
Now this consistent orientation defines either a node or an end of~$T_\gamma$.
Since $\vs_{\! X}\in\gamma$ lies above $\tlsep{X}$ and since $\gamma^\ast$ contains no $\omega$-chains by Lemma~\ref{inversesCorridorsParliaments}, it must be a node $t$ of $T_\gamma$.
Let $P=t_0\dots t_k$ be the $t$--$R_\gamma$ path in $T_\gamma$ and let $n\in\N$ be the number with $v_\gamma^n=t_k$, see Figure~\ref{fig: FigTechnicalThmP} (the ray $R_\gamma=v_\gamma^0 v_\gamma^1\ldots$ was defined right above~(\ref{eq:tauGammaFromRay})).

\begin{figure}[h]
\centering
\def\svgwidth{0.9\columnwidth}
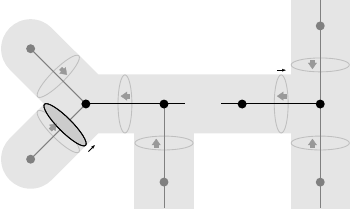
\caption{The orientation of the image $\vSinf [\gamma]$ of $\alpha_\gamma$ and the path $P$ in the second step of the proof of Theorem~\ref{TechnicalStarDecomposition}.}
\label{fig: FigTechnicalThmP}
\end{figure}

We claim that we may assume $n\neq 0$. For this, it suffices to show that we may assume that $\vs_{\!\gamma}^{\srs 0}$ lies in the orientation that defines $t$.
So let us consider the case that $\sv_{\sls\gamma}^0$ instead of $\vs_{\!\gamma}^{\srs 0}$ lies in the orientation that defines $t$.
In this case we have either $\sv_{\sls\gamma}^0\le\tlsep{X}$ or $\tlsep{X}<\vs_{\!\gamma}^{\srs 0}$. 
But actually, we cannot have $\sv_{\sls\gamma}^0\le\tlsep{X}$ because otherwise $\tlsep{X}\le\vs_{\! X}$ would imply that $\sv_{\sls\gamma}^0\le\vs_{\! X}$ meaning that $\sv_{\sls\gamma}^0$ and $\vs_{\! X}$ violate the consistency of $\gamma$.
Therefore, we must have $\tlsep{X}<\vs_{\!\gamma}^{\srs 0}$, and then we are done because $\vs_{\!\gamma}^{\srs 0}$ is an element of $\sigma_\gamma$.
Thus, we may assume $n>0$.

If the path $P$ is non-trivial, i.e., if $t_0=t$ is distinct from $t_k=v_\gamma^n$, then we consider the separation $\vr_{\! P}=\alpha_\gamma(t_{k-1},t_k)\in\gamma$ associated with the last edge $t_{k-1}t_k$ of~$P$. 
By the definition of $P$, the separation $\rv_{\! P}$ satisfies either $\rv_{\! P}\le\tlsep{X}$ or $\tlsep{X}<\vr_{\! P}$.
The former inequality would violate the consistency of $\gamma$ as $\rv_{\! P}\le\tlsep{X}\le\vs_{\! X}$ would follow (here we use that $\vSinf[\gamma]\subset\vSinf[O]$ is regular to ensure $\vr_{\! P}\neq\vs_{\! X}$).
Hence $\tlsep{X}<\vr_{\! P}$.
As $t_{k-1}$ is distinct from $v_\gamma^{n-1}$, and both vertices have  $v_\gamma^n$ as a neighbour in $T_\gamma$, we obtain the inequalities $\vr_{\! P}\le\vs_{\!\gamma}^{\srs n}$ and $\vr_{\! P}\le\sv_{\sls\gamma}^{n-1}$.
Thus, 
\begin{align*}
    \tlsep{X}\le\vr_{\! P}\le \vs_{\!\gamma}^{\srs n}\wedge\sv_{\sls\gamma}^{n-1}\in\sigma.
\end{align*}

Otherwise the path $P$ is trivial, i.e., $t_0=t_k$ where $t_0=t$ and $t_k=v_\gamma^n$.
By the definition of $t$ we have either $\vs_{\!\gamma}^{\srs n-1}\le\tlsep{X}$ or $\tlsep{X}<\sv_{\sls\gamma}^{n-1}$, and we have either $\sv_{\sls\gamma}^{n}\le\tlsep{X}$ or $\tlsep{X}<\vs_{\!\gamma}^{\srs n}$.
The case $\sv_{\sls\gamma}^{n}\le\tlsep{X}$ is impossible since otherwise $\tlsep{X}\le\vs_{\! X}\in\gamma$ would imply that $\sv_{\sls\gamma}^{n}\le\vs_{\! X}$ meaning that $\vs_{\!\gamma}^n$ and $\vs_{\! X}$ violate the consistency of $\gamma$.
Therefore, we have either $\tlsep{X}\le\vs_{\!\gamma}^{\srs n}\wedge\sv_{\sls\gamma}^{n-1}\in\sigma$ as desired, or we have $\vs_{\!\gamma}^{\srs n-1}\le \tlsep{X}<\vs_{\!\gamma}^{\srs n}$.
For this latter case, we show that there is a component $C\in\cK(X)$ such that $\vs_{\!\gamma}^{\srs n-1}\le\lsep{C}{X}$ holds.
This suffices to complete the proof, because then the inequalities $\lsep{\cK(X)\setminus\{C\}}{X}\le\lsep{X}{C}\le\sv_{\sls\gamma}^{n-1}$ and $\lsep{\cK(X)\setminus\{C\}}{X}\le\tlsep{X}<\vs_{\!\gamma}^{\srs n}$ give
\begin{align*}
    \lsep{\cK(X)\setminus\{C\}}{X}\le\vs_{\!\gamma}^{\srs n}\wedge\sv_{\sls\gamma}^{n-1}\in\sigma.
\end{align*} 

The separation $\vs_{\!\gamma}^{\srs n-1}\in O$ is, by definition, the supremum of some corridor $\delta$ of $\{\,(A,B)\in O_{\cK}:\vert A\cap B\vert\le\ell\,\}$ for some number $\ell\in \N$.
Then every separation $\tlsep{Y}\in \delta$ satisfies $\tlsep{Y}\le \vs_{\!\gamma}^{\srs n-1}\le \tlsep{X}$.
In particular, as the \principal\ tree set $T_{\cK}$ satisfies the conclusions of Theorem~\ref{TreeSetAPC}, every separation $\tlsep{Y}\in\delta$ satisfies $\tlsep{Y}\le\lsep{C_X(Y)}{X}$.
Hence in order to show that $\vs_{\!\gamma}^{\srs n-1}\le\lsep{C}{X}$ for some component $C\in\cK(X)$, it suffices to show that $C_X(Y)=C_X(Y')$ for every two separations $\tlsep{Y}$ and $\tlsep{Y'}$ in $\delta$.
Given $\tlsep{Y}$ and $\tlsep{Y'}$, consider any separation $\tlsep{Z}\in\delta$ above the two.
Then $\tlsep{Z}\le\lsep{C_X(Z)}{X}$ implies that both $C_X(Y)$ and $C_X(Y')$ are contained in $C_X(Z)$, giving $C_X(Y)=C_X(Y')$ as desired.
\end{proof}

\section{Overview of all duality results}\label{sec:summary}

\noindent In this section we summarise all duality theorems of this series.
A very brief overview of the complementary structures is given by the following table:\vspace{0.5cm}

\begin{center}
\begin{tabular}{r|c|c|c}
     & normal tree & tree-decomposition & other\\
\hline
combs & \cmark & \cmark & \cmark\\
stars & \cmark & \cmark\\
dominated combs & \cmark & \cmark\\
dominating stars & \cmark & \cmark\\
\hline
undominated comb & \xmark & \cmark & \cmark\\
undominating star & \xmark & \cmark & \cmark
\end{tabular}
\end{center}\vspace{0.5cm}
Here, a check mark means, for example, that we proved a duality theorem for combs in terms of normal trees, whereas the two crosses mean that normal trees cannot serve as complementary structures for undominated combs or undominating stars.

Finally, we summarise our duality theorem for combs, stars and combinations of the two explicitly in five theorems:

\newpage
\begin{NNtheorem}[Combs]
Let $G$ be any connected graph and let $U\subset V(G)$ be any vertex set.
Then the following assertions are equivalent:
\begin{enumerate}
\item $G$ does not contain a \dblue{comb} \at $U$;
\item there is a \dblue{rayless normal tree} $T\subset G$ that contains $U$ (moreover, $T$ can be chosen such that it contains $U$ cofinally);
\item $G$ has a \dblue{rayless tree-decomposition} into parts each containing at most finitely many vertices from $U$ and whose parts at non-leaves of the decomposition tree are all finite (moreover, the tree-decomposition displays $\Abs{U}$ and may be chosen with connected separators);
\item for every infinite $U'\subset U$ there is a \dblue{critical vertex set} $X\subset V(G)$ such that infinitely many of the components in $\CX$ meet $U'$;
\item $G$ has a \dblue{$U$-rank};
\item $G$ has a rooted tame \dblue{tree-decomposition} $(T,\cV)$ that covers $U$ cofinally and satisfies the following four assertions:
    \begin{itemize}
        \item[\textbf{--}] $(T,\cV)$ is the squeezed expansion of a normal tree of $G$ that contains the vertex set~$U$ cofinally;
        \item[\textbf{--}] every part of $(T,\cV)$ meets $U$ finitely and parts at non-leaves are finite;
        \item[\textbf{--}] $(T,\cV)$ displays $\AbsC{U}\subset\crit(G)$;
        \item[\textbf{--}] the rank of $T$ is equal to the $U$-rank of $G$.
    \end{itemize}
\end{enumerate}
\end{NNtheorem}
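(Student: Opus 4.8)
The plan is to prove the two ``invariant'' equivalences (i)$\,\Leftrightarrow\,$(iv) and (iv)$\,\Leftrightarrow\,$(v) on their own, and to arrange the three structural characterisations into a cycle (i)$\,\Rightarrow\,$(ii)$\,\Rightarrow\,$(vi)$\,\Rightarrow\,$(iii)$\,\Rightarrow\,$(i).

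For (i)$\,\Leftrightarrow\,$(iv) I would use that a vertex set accumulates at an end of a connected graph if and only if a comb is attached to it, together with \cite[Lemma~2.9]{StarComb1StarsAndCombs}: every infinite vertex set of a connected graph has an end or a critical vertex set in its closure. If $G$ has no comb attached to $U$, then it has none attached to any infinite $U'\subseteq U$, so no such $U'$ has an end in its closure and hence, by that lemma, every infinite $U'\subseteq U$ has a critical vertex set in its closure --- which is exactly~(iv). Conversely, if $C$ is a comb attached to $U$ with spine $R$ and teeth set $W\subseteq U$, then for every finite $X\subset V(G)$ all but finitely many of the pairwise disjoint finite paths of $C$, and all but finitely many of their first vertices on $R$, avoid $X$; so all but finitely many teeth lie in the component of $G-X$ containing a tail of $R$, and thus $W$ is an infinite subset of $U$ witnessing the failure of~(iv). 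The equivalence (iv)$\,\Leftrightarrow\,$(v) I would then obtain by unwinding the definition of the $U$-rank from~\cite{StarComb1StarsAndCombs}: this ordinal-valued rank is well-defined precisely when no infinite subset of $U$ accumulates at an end of $G$.

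To run the cycle I would first prove (i)$\,\Rightarrow\,$(ii) by a transfinite recursion that builds a normal tree $T\subseteq G$ while prioritising the vertices of $U$, so that $T$ ends up containing $U$ cofinally; were this recursion not to terminate it would produce a normal ray which, together with short detours to the prioritised vertices of $U$, would give a comb attached to $U$, contradicting~(i) --- so $T$ is rayless. For (ii)$\,\Rightarrow\,$(vi) I would feed $T$ into the squeezed-expansion construction of~\cite{StarComb1StarsAndCombs}, which turns a rayless normal tree containing $U$ cofinally into a rooted tame tree-decomposition whose decomposition tree has the same rank as $T$, whose parts meet $U$ finitely with finite parts at non-leaves, which displays $\AbsC{U}\subseteq\crit(G)$, and which has connected separators. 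The step (vi)$\,\Rightarrow\,$(iii) is then just reading off the properties demanded by~(iii). Finally, for (iii)$\,\Rightarrow\,$(i) suppose $G$ contained a comb $C$ attached to $U$ with spine $R$ and infinite teeth set $W\subseteq U$: since the tree-decomposition has finite adhesion and a rayless decomposition tree, $R$ has a tail in a single part $V_t$; as $V_t$ is infinite, $t$ is a leaf, say with finite separator $S$ towards its unique neighbour. For all but finitely many teeth $w$ the finite path of $C$ from $w$ to $R$ begins in the tail of $R$ outside $S$ and, the paths of $C$ being pairwise disjoint and $S$ finite, avoids $S$ altogether; since $S$ separates $V_t$ from the rest of $G$, such a path cannot leave $V_t$, so $w\in V_t$. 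Thus $V_t$ contains infinitely many vertices of $W\subseteq U$, contradicting that $V_t$ meets $U$ finitely.

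I expect the structural half to be the main obstacle: carrying out the transfinite construction in (i)$\,\Rightarrow\,$(ii) so that $T$ is genuinely rayless and cofinal for $U$, and then checking that the squeezed expansion of $T$ meets all four bulleted requirements of~(vi) at once --- in particular that it displays $\AbsC{U}$ and that its rank coincides with the $U$-rank. By comparison, the equivalences (i)$\,\Leftrightarrow\,$(iv)$\,\Leftrightarrow\,$(v) and the capturing argument behind (iii)$\,\Rightarrow\,$(i) should be routine.
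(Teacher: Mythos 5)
First, a point of reference: this ``Combs'' theorem appears in Section~\ref{sec:summary} of the present paper only as a summary of results established elsewhere in the series --- essentially all of it is proved in the first paper \cite{StarComb1StarsAndCombs} --- so there is no proof here to compare against line by line. Judged on its own terms, your roadmap is architecturally sound and matches how the series organises these results: (i)$\Leftrightarrow$(iv) via \cite[Lemma~2.9]{StarComb1StarsAndCombs} is correct as you state it (your converse argument, that the teeth of a comb eventually collect in a single component of $G-X$ for every finite $X$, is exactly right), and the chain (ii)$\Rightarrow$(vi)$\Rightarrow$(iii)$\Rightarrow$(i) is the intended route, with your capturing argument for (iii)$\Rightarrow$(i) essentially the standard one (you silently use that ``non-leaf parts are finite'' yields finite adhesion, which fails only in the degenerate case of a two-leaf decomposition tree, where $U$ is finite anyway).

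The genuine gap is in (i)$\Rightarrow$(ii), which is the substantive theorem of the whole equivalence, and in (iv)$\Leftrightarrow$(v), which you dismiss as ``unwinding the definition''. A transfinite recursion that ``does not terminate'' does not produce a ray: it can happily continue through $\omega$, $\omega+1$, and beyond without ever creating an infinite chain of order type $\omega$ in the tree order, so raylessness of the resulting normal tree does not follow from non-termination considerations. What is actually needed is either (a) the fact that a set $U$ with no comb attached is dispersed and hence normally spanned (Jung's theorem, or the construction in \cite{StarComb1StarsAndCombs}), after which one takes the down-closure of $U$ in a normal tree containing it and checks that a normal ray in that down-closure would yield a comb attached to $U$; or (b) a transfinite induction on the $U$-rank, which is how the construction is controlled in \cite{StarComb1StarsAndCombs} --- but that presupposes (i)$\Rightarrow$(v), i.e.\ the well-definedness of the $U$-rank, which is itself proved there by an induction using the critical-vertex-set characterisation (iv), not by ``unwinding the definition''. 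In short: your two ``routine'' equivalences and your cycle are fine, but the one step you acknowledge as the main obstacle is exactly where your sketch, as written, does not yet contain an argument.
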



\begin{NNtheorem}[Stars]
Let $G$ be any connected graph and let $U\subset V(G)$ be any vertex set.
Then the following assertions are equivalent:
\begin{enumerate}
\item $G$ does not contain a \dblue{star} \at $U$;
\item there is a \dblue{locally finite normal tree} $T\subset G$ that contains $U$ and all whose rays are undominated in $G$ (moreover, $T$ can be chosen such that it contains $U$ cofinally and every component of $G-T$ has finite neighbourhood);
\item $G$ has a \dblue{locally finite tree-decomposition} with finite and pairwise disjoint separators such that each part contains at most finitely many vertices of~$U$ (moreover, the tree-decomposition can be chosen with connected separators and such that it displays $\AbsC{U}\subset\Omega(G)$);
\end{enumerate}
\end{NNtheorem}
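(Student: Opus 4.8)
The plan is to prove the cycle of implications (i)$\to$(ii)$\to$(iii)$\to$(i). The only tool used throughout is the infinite fan version of Menger's theorem: a vertex $c$ is the centre of a star attached to a vertex set $W$ if and only if for every finite set $F$ of vertices with $c\notin F$ the component of $c$ in $G-F$ contains infinitely many vertices of $W$. Contrapositively, any family of internally disjoint $c$--$W$ paths indexed by an infinite set is a star attached to $W$ centred at $c$; this is how $\neg$(i) will be produced in the two non-trivial implications.

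For the implication (i)$\to$(ii) I would build $T$ by the usual transfinite recursion for normal trees: fix a root $r$, maintain an increasing coherent chain of normal trees $T_\alpha\subset G$ (whose union stays normal), and at a successor step, if some vertex of $U$ is still uncovered, pick such a vertex $u$, let $C$ be its component of $G-T_\alpha$, let $t=\max N(C)$, and extend $T_\alpha$ by a $t$--$u$ path with interior in $C$; this keeps the tree normal since the neighbourhoods of the new components inside $C$ then lie on the chain $\lceil u\rceil$. Running this systematically until $U$ is covered produces a normal tree $T\subset G$ with $U\subset V(T)$. The place where (i) enters is local finiteness: when a vertex $t$ gains a new child $c$, that child lies on a freshly added path whose far end is a vertex of $U$, so the subtree below $c$ meets $U$; as distinct children of $t$ span disjoint subtrees, infinitely many children of $t$ would give infinitely many internally disjoint $t$--$U$ paths inside $T\subset G$, i.e.\ a star attached to $U$ centred at $t$, contradicting (i). With additional bookkeeping one also arranges that $T$ contains $U$ cofinally and that every component of $G-T$ has finite neighbourhood; and then all rays of $T$ are undominated in $G$, since a vertex dominating a ray $R$ of $T$ could be joined by infinitely many internally disjoint paths to vertices of $R$ lying cofinally in the tree-order of $T$, and prolonging these paths inside $T$ up to vertices of $U$ again yields a star attached to $U$.

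For the implication (ii)$\to$(iii) I would pass to the tree-decomposition canonically associated with the normal tree $T$---its squeezed expansion in the sense of~\cite{StarComb1StarsAndCombs}---with decomposition tree obtained from $T$ by attaching, for each component $C$ of $G-T$, one extra leaf at the node of $\max N(C)$ whose part is $V(C)\cup N(C)$. Local finiteness of $T$ yields a locally finite decomposition tree with finite adhesion sets, the squeezing makes the adhesion sets pairwise disjoint, and since $U\subset V(T)$ while each part meets $V(T)$ in a finite set and each extra part meets $U$ only in the finite set $N(C)$, every part meets $U$ finitely. For the ``moreover'': $\neg$(i) implies that $U$ is tough in $G$ by Lemma~\ref{lem: undominating star and tough}, hence that $G$ has no critical vertex set in the closure of $U$ by Lemma~\ref{lem: list}, so $\AbsC{U}$ consists of ends only, i.e.\ $\AbsC{U}\subset\Omega(G)$; the undominatedness of the rays of $T$ makes the decomposition display exactly these ends; and connectedness of the separators is arranged by routine local modifications.

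For the implication (iii)$\to$(i), the easy one, suppose $S$ is a star attached to $U$ with centre $c$ and leaves $\ell_1,\ell_2,\dots\in U$ joined to $c$ by internally disjoint paths $P_1,P_2,\dots$, and let $(D,(V_d)_{d\in D})$ be the decomposition from (iii). Since its adhesion sets are finite and pairwise disjoint, the subtree $\{d:c\in V_d\}$ of $D$ contains at most one edge, so $c$ lies in at most two parts; by local finiteness of $D$ only finitely many edges leave this subtree, and their adhesion sets $A_1,\dots,A_m$ are finite and do not contain $c$. Since each part meets $U$ finitely, all but finitely many of the $\ell_i$ lie in parts that do not contain $c$, so for these $i$ the path $P_i$ meets the adhesion set of some edge on the path in $D$ between a node whose part contains $c$ and a node whose part contains $\ell_i$, and hence meets one of $A_1,\dots,A_m$. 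By the pigeonhole principle infinitely many of the internally disjoint paths $P_i$ meet one fixed finite set $A_j$ in vertices different from $c$, hence in pairwise distinct vertices---a contradiction. I expect the main obstacle to be the recursion in (i)$\to$(ii): normality, cofinal coverage of $U$, local finiteness, finiteness of the neighbourhoods of the components of $G-T$, and undominatedness of the rays have to be secured simultaneously, which forces one to interlock the choices made along the recursion rather than to establish these properties one at a time.
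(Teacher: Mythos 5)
You should first be aware that this paper does not actually prove the Stars theorem: it appears only in the summary Section~\ref{sec:summary} as a restatement of a result of the first paper of the series~\cite{StarComb1StarsAndCombs}, so there is no in-paper proof to measure yours against. Judged on its own, your implication (iii)$\to$(i) is correct, and (ii)$\to$(iii) is broadly right in spirit (though note that attaching one leaf per component of $G-T$ at the node $\max N(C)$ need not yield a locally finite decomposition tree, since infinitely many components may attach below the same node, and the sets $N(C)$ need not be pairwise disjoint --- so the ``squeezing'' you delegate to \cite{StarComb1StarsAndCombs} is carrying more weight than you acknowledge). The genuine gap is in (i)$\to$(ii).

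Your recursion breaks at the step ``let $t=\max N(C)$''. By normality, $N(C)$ is a chain in the tree-order of $T_\alpha$, but an infinite chain need not have a maximum, and your greedy procedure produces exactly this situation even when (i) holds. Concretely, let $G$ consist of a ray $R=r_0r_1\cdots$, pendant vertices $u_i$ with $u_ir_i\in E(G)$, and a second ray $c_0c_1\cdots$ with $c_ir_i\in E(G)$; put $U=\{c_0,u_0,u_1,\dots\}$. Every vertex of $G$ has degree at most four, so $G$ contains no star \at any set whatsoever, and (i) holds. Yet if the recursion processes $u_0,u_1,\dots$ before $c_0$, it builds $T_\omega=R\cup\bigcup_i r_iu_i$ (which is normal), and the component $C=\{c_0,c_1,\dots\}$ containing the still-uncovered vertex $c_0$ satisfies $N(C)=V(R)$ --- a chain with no maximum, so the extension step is undefined. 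This is not a matter of ``additional bookkeeping'' for the moreover-clauses: the basic construction can stall, and avoiding the stall requires deciding which vertices \emph{outside} $U$ to absorb into the tree and in what order. That --- showing that $\neg$(i) fails to occur forces $U$ to be normally spanned at all, and then extracting a locally finite normal tree with undominated rays, cofinal $U$, and finite neighbourhoods of the components of $G-T$ --- is precisely the content of the proof in~\cite{StarComb1StarsAndCombs}, and none of it follows from the naive reach-for-$U$ recursion. A smaller but real issue in the same implication: your argument that the rays of $T$ are undominated needs the prolongations of the dominating vertex's paths into $U$ to be pairwise disjoint, which requires choosing the landing vertices on $R$ and the $U$-vertices above them so that consecutive prolongations lie in disjoint slabs of $T$; as stated, the prolongations may all collide in a single subtree.
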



\begin{NNtheorem}[Dominating stars and dominated comb]
Let $G$ be any connected graph and let $U\subset V(G)$ be any vertex set.
Then the following assertions are equivalent:
\begin{enumerate}
    \item $G$ does not contain a \dblue{dominating star} \at $U$;
    \item $G$ does not contain a \dblue{dominated comb} \at $U$;
    \item there is a \dblue{normal tree} $T\subset G$ that contains $U$ and all whose rays are undominated in $G$ (moreover, the normal tree $T$ can be chosen such that it contains $U$ cofinally and every component of $G-T$ has finite neighbourhood);
    \item $G$ has a rooted \dblue{tree-decomposition} $(T,\cV)$ such that
    \begin{itemize}
        \item[\textbf{--}]each part contains at most finitely many vertices from $U$;
        \item[\textbf{--}]all parts at non-leaves of $T$ are finite;
        \item[\textbf{--}]$(T,\cV)$ has essentially disjoint connected separators;
        \item[\textbf{--}] $(T,\cV)$ displays $\Abs{U}$.
    \end{itemize}
\end{enumerate}
\end{NNtheorem}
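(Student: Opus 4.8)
The plan is to prove the cycle (i)$\Rightarrow$(ii)$\Rightarrow$(iii)$\Rightarrow$(iv)$\Rightarrow$(i), together with the trivial reverse of the first implication. The equivalence (i)$\Leftrightarrow$(ii) is essentially formal: if $S$ is a dominating star \at $U$ dominating a comb $C$, then the subdivided substar $S'\subseteq S$ whose leaves are the infinitely many leaves of $S$ that are teeth of $C$ is again a subdivided star \at $U$, and the subcomb $C'\subseteq C$ with exactly those teeth is a comb \at $U$ dominated by $S'$; the converse is symmetric. It is convenient to replace (ii) by the equivalent statement that \emph{no end of $G$ lying in $\Abs{U}$ is dominated by a vertex of $G$}. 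One direction is immediate: a dominated comb \at $U$ has its spine in an end $\omega$, its teeth in $U$ converge to $\omega$ so $\omega\in\Abs{U}$, and pulling the tines back to the spine exhibits the dominating star's centre as a vertex dominating $\omega$. The other is a routine simultaneous recursion: given $\omega\in\Abs{U}$ dominated by $c$, grow in alternation a comb \at $U$ with spine in $\omega$ and a star at $c$ whose leaves are chosen among the teeth of that comb.

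For (iv)$\Rightarrow$(i) I would argue by contradiction: assume $(T,\cV)$ is as in (iv) and $S$ is a dominating star with centre $c$ dominating a comb $C$ with spine-end $\omega$. The infinitely many teeth of $C$ that lie in $U$ converge to $\omega$, hence $\omega\in\Abs{U}$, and since $(T,\cV)$ displays $\Abs{U}$ the end $\omega$ corresponds to an end of the decomposition tree, i.e.\ to a ray $t_0t_1\dots$ in $T$; because the parts at non-leaves of $T$ are finite, all adhesion sets $V_{t_i}\cap V_{t_{i+1}}$ are finite. Now $c$ lies in a single part $V_{t_j}$, whereas for every $i$ all but finitely many teeth of $S$ in $U$ lie in parts on the far side of $V_{t_i}\cap V_{t_{i+1}}$; the corresponding infinitely many pairwise disjoint $c$--leaf paths of $S$ would then all meet this finite adhesion set, which is impossible. (Only finiteness of the adhesion sets and the displaying property are used here.)

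For (iii)$\Rightarrow$(iv) I would build the required tree-decomposition from the normal tree $T$ of (iii), chosen to contain $U$ cofinally; note first that undominatedness of the rays of $T$ forces every component of $G-T$ to have finite neighbourhood, since an infinite neighbourhood would be an infinite chain in $T$ dominated by any vertex of that component. Take as decomposition tree $T$ with, for each component $C$ of $G-T$, a fresh leaf attached to the top vertex of the finite chain $N(C)$; let the part at $t\in T$ be $\dc{t}$, the part at the new leaf for $C$ be $C\cup N(C)$, and (after the standard refinement making them essentially disjoint) use the connected sets $\dc{s}$ and $N(C)$ as adhesion sets. Then the parts at non-leaves are the finite sets $\dc{t}$, every part meets $U$ finitely because $U\subseteq V(T)$, and the decomposition displays $\Abs{U}$: an end in $\Abs{U}$ is the end of a unique normal ray of $T$ which, being undominated, is not shadowed by any vertex along cofinally many adhesion sets, so it corresponds to an end of $T$; ends outside $\Abs{U}$ live in parts.

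The implication (ii)$\Rightarrow$(iii) is the heart of the theorem and the step I expect to be the main obstacle. Assuming (in the reformulated guise) that no end in $\Abs{U}$ is dominated, the substantial first task is to show that $U$ is normally spanned in $G$ --- equivalently, by a relative version of Jung's theorem of the kind underlying the normal-tree arguments of the earlier papers, that $U$ is a countable union of vertex sets dispersed in $G$. The no-dominated-end hypothesis is precisely what prevents the underlying transfinite construction from getting stuck: whenever the partial normal tree constructed so far would be forced to continue into a ray dominated by an already-placed vertex, the end of that ray would lie in $\Abs{U}$ and hence be undominated --- a contradiction --- so one can instead always close the current branch off behind a finite separator. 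Having obtained a normal tree $T_0$ of $G$ with $U\subseteq V(T_0)$, one passes to the down-closure $T$ of $U$ in $T_0$: then $T$ contains $U$ cofinally, and every ray of $T$ lies below vertices of $U$, so its end lies in $\Abs{U}$ and is therefore undominated; as noted above this also makes every component of $G-T$ have finite neighbourhood, giving the ``moreover'' clause. Carrying out the transfinite construction rigorously --- simultaneously controlling the cofinal coverage of $U$, the undominatedness of every emerging ray, and the finiteness of the component neighbourhoods --- is where essentially all of the difficulty sits.
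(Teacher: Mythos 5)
This theorem is not actually proved in the present paper: it is the main result of the second paper of the series and is merely restated, without proof, in the summary Section~\ref{sec:summary}. So there is no in-paper argument to compare yours against; I can only assess the proposal on its own terms. The formal equivalence (i)$\Leftrightarrow$(ii), the reformulation of (ii) as ``no end in $\Abs{U}$ is dominated'', and the implication (iv)$\Rightarrow$(i) via finite adhesion sets along the ray of $T$ displaying the relevant end are all sound and standard. Your construction for (iii)$\Rightarrow$(iv) (decomposition tree $=T$ with parts $\dc{t}$, plus a fresh leaf with part $C\cup N(C)$ for each component $C$ of $G-T$) is the expected one, but two of the four bullet points are not actually verified: $N(C)$ is a chain in the tree-order of $T$, not an interval, so $G[N(C)]$ need not be connected (the usual fix is to take the separator $\dc{t_C}$ and the leaf's part $C\cup\dc{t_C}$ instead); and ``essentially disjoint'' is a technical notion from the second paper that your separators $\dc{s}$, which are heavily nested rather than disjoint, satisfy only after the unspecified ``standard refinement'' you allude to. These are repairable details.

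The genuine gap is (ii)$\Rightarrow$(iii), which you correctly identify as the heart of the theorem and then do not prove. This implication is a strengthening of Jung's normal spanning tree criterion --- from the hypothesis that no end in $\Abs{U}$ is dominated one must produce a normal tree of $G$ containing $U$ (cofinally, with all rays undominated) --- and it is the entire substance of the result; everything else in the cycle is bookkeeping around it. Your sketch (``whenever the partial normal tree would be forced to continue into a ray dominated by an already-placed vertex, close the branch off behind a finite separator'') does not describe a workable construction: the obstruction to normal spannability is not a single local event one can sidestep, but the global combinatorial fact that $U$ decomposes into countably many dispersed sets, and deriving that decomposition from the undominatedness of the ends in $\Abs{U}$ is exactly the hard theorem of the second paper. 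As written, the proposal establishes the easy three implications and leaves the one nontrivial implication as a declared intention rather than a proof.
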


\newpage
\begin{NNtheorem}[Undominated combs]
Let $G$ be any connected graph and let $U\subset V(G)$ be any vertex set.
Then the following assertions are equivalent:
\begin{enumerate}
    \item $G$ does not contain an \dblue{undominated comb} \at $U$;
    \item $G$ has a \dblue{star-decomposition} with finite separators such that $U$ is contained in the central part and all undominated ends of $G$ live in the leaves' parts (moreover, the star-decomposition can be chosen with connected separators);
    \item $G$ has a connected \dblue{subgraph} that contains $U$ and all whose rays are dominated in it.
\end{enumerate}
Moreover, if $U$ is normally spanned in $G$, we may add
\begin{enumerate}[resume]
    \item there is a \dblue{rayless tree} $T\subset G$ that contains $U$.
\end{enumerate}
\end{NNtheorem}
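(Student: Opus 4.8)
The plan is to prove the cycle \textup{(i)}$\to$\textup{(iii)}$\to$\textup{(ii)}$\to$\textup{(i)} and then to add \textup{(iv)} under the normal-spannedness hypothesis; this is the central equivalence of the third paper of the series~\cite{StarComb3TheUndominatedComb}, and its proof rests on two workhorses. The first is the star-comb lemma~\cite{StarComb1StarsAndCombs}: every infinite set of vertices of a connected graph has a star or a comb attached to it. The second is the standard fact that a vertex $v$ dominates a comb $C\subseteq G$ if and only if it dominates the end of $G$ to which the spine of $C$ converges (possibly after passing to a subcomb); in particular every undominated comb converges to an undominated end, and conversely every comb converging to a dominated end is dominated. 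First I would prove \textup{(i)}$\Leftrightarrow$\textup{(iii)}. For $\neg$\textup{(i)}$\to\neg$\textup{(iii)}, take an undominated comb $C$ \at $U$ and any connected $H\subseteq G$ containing $U$; its teeth lie in $U\subseteq V(H)$, so the star-comb lemma applied in $H$ to these teeth gives either a star in $H$, whose centre would then dominate infinitely many teeth of $C$ and hence dominate $C$ --- impossible --- or a comb $C'$ in $H$ all whose teeth are teeth of $C$. A short separator argument (using that the tooth-paths of a comb are pairwise disjoint, so only finitely many meet any finite set) shows $C'$ and $C$ converge to the same, undominated, end; but the spine of $C'$ is a ray of $H$, so \textup{(iii)} fails. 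The converse \textup{(iii)}$\to$\textup{(i)} runs the same argument backwards.

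Next I would establish \textup{(i)}$\Leftrightarrow$\textup{(ii)}. The implication \textup{(ii)}$\to$\textup{(i)} is short: an undominated comb \at $U$ converges to an undominated end, which by \textup{(ii)} lives in a leaf's part $V_\ell$; then the finite adhesion set separates $U\subseteq V_{\mathrm{central}}$ from $V_\ell$ minus the adhesion set, and the latter contains all but finitely many teeth of the comb --- contradicting that the teeth lie in $U$. For \textup{(iii)}$\to$\textup{(ii)} I would use the subgraph $H$ from \textup{(iii)}: one first checks that for every undominated end $\omega$ there is a finite $S_\omega$ such that $H$ avoids the component of $G-S_\omega$ containing $\omega$ --- otherwise the star-comb lemma inside the connected $H$, applied to a sequence of vertices of $H$ escaping towards $\omega$, would yield a ray of $H$ converging to $\omega$, making $\omega$ dominated in $H\subseteq G$ --- and then one patches the separators $S_\omega$ into a star-decomposition whose central part contains $V(H)\supseteq U$ and whose leaves' parts absorb the undominated ends, finally replacing the adhesion sets by connected subgraphs of $G$ to obtain the connected-separators strengthening.

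Assuming $U$ is normally spanned in $G$, I would add \textup{(iv)}. Here \textup{(iv)}$\to$\textup{(i)} is immediate: a rayless tree $T\subseteq G$ containing $U$ is connected and ray-free, so the star-comb lemma applied in $T$ to the teeth of any comb \at $U$ can only return a star, whose centre dominates that comb. For \textup{(i)}$\to$\textup{(iv)} I would start from a normal tree $T'\subseteq G$ containing $U$ (witnessing normal spannedness), note that by \textup{(i)} every ray of $T'$ with cofinally many vertices in the down-closure of $U$ is dominated, and then truncate $T'$ below the corresponding dominating vertices to obtain a rayless subtree still containing $U$.

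The hard part will be the construction direction \textup{(iii)}$\to$\textup{(ii)}: turning the family of separators $\{S_\omega\}$ into a genuine tree-decomposition with finite separators that simultaneously displays all undominated ends while confining $U$ to a single, central, part. This is the comb-analogue of the star-decomposition construction carried out in the present paper for undominating stars (Theorems~\ref{TechnicalStarDecomposition}--\ref{thm: Peeling the onion}), and it is where the real work of~\cite{StarComb3TheUndominatedComb} lies.
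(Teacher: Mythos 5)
First, a remark on what you are comparing against: the present paper does not prove this theorem at all. It appears only in Section~\ref{sec:summary} as a restatement of the main results of the third paper of the series~\cite{StarComb3TheUndominatedComb}; the only "proof" here is the citation. So your proposal has to be judged on its own as a reconstruction of that paper's argument.

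Your outline and the two easy implications are sound: (iii)$\to$(i) via the star-comb lemma applied inside $H$ to the teeth (including the rerouting fact that a vertex dominating the spine yields a star dominating the comb), and (ii)$\to$(i) via the finite adhesion set of the leaf in which the undominated end lives. The existence of the separators $S_\omega$ in your sketch of (iii)$\to$(ii) is also correctly argued. But there are two genuine gaps, both in the constructive directions. First, you dismiss (i)$\to$(iii) with "the converse runs the same argument backwards". It does not: given only that every comb attached to $U$ is dominated, you must \emph{build} a connected subgraph $H\supseteq U$ all of whose rays are dominated \emph{in $H$}; the star-comb lemma gives no such construction, and this is one of the two places where the real work of~\cite{StarComb3TheUndominatedComb} lies. (Note also that a ray of $H$ dominated in $G$ need not be dominated in $H$, so even granting a clever choice of $H$ one has to arrange the dominating vertices and linking paths inside $H$.) Second, in (iii)$\to$(ii) you produce one finite separator $S_\omega$ per undominated end $\omega$, but there may be infinitely many such ends and the sets $S_\omega$ need not be nested or even pairwise compatible; "patching them into a star-decomposition" whose central part contains $U$ is precisely the step that requires a tree-of-tangles-type argument (the analogue of Theorem~\ref{TechnicalStarDecomposition} in this paper), and you supply no mechanism for it. Your sketch of (i)$\to$(iv) has the same flavour of difficulty: "truncating $T'$ below the dominating vertices" is not an operation that keeps $U$ inside a rayless tree; one has to re-attach the tails of dominated normal rays through their dominating vertices, which again is a construction, not a truncation.

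So: correct skeleton, correct easy directions, but the three hard constructions --- the subgraph in (iii), the star-decomposition in (ii), and the rayless tree in (iv) --- are asserted rather than proved.
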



\begin{NNtheorem}[Undominating stars]
Let $G$ be any connected graph and let $U\subset V(G)$ be any vertex set.
Then the following assertions are equivalent:
\begin{enumerate}
    \item $G$ does not contain an \dblue{undominating star} \at $U$;
    \item there is a tough \dblue{subgraph} $H\subset G$ that contains $U$;
    \item $G$ has a tame \dblue{star-decomposition} such that $U$ is contained in the central part and every critical vertex set of $G$ lives in a leaf's part.
\end{enumerate}
Moreover, if $U$ is normally spanned, we may add
\begin{enumerate}[resume]
    \item there is a locally finite \dblue{normal tree} $T\subset G$ that contains $U$.
\end{enumerate}
\end{NNtheorem}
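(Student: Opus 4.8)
The plan is to read the theorem off the three complementarity results already established in this paper, together with one short extra argument for the normally spanned case; there is essentially nothing new to prove.

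First I would recall that Theorem~\ref{thm: toughDual} states that, for a connected graph $G$ and a vertex set $U$, the two assertions ``$G$ contains an undominating star attached to $U$'' and ``$G$ has a tough subgraph that contains $U$'' are complementary. Negating the first of these is precisely assertion~(i) of the present theorem, so this immediately gives the equivalence (i)$\,\Leftrightarrow\,$(ii). In exactly the same way, Theorem~\ref{StarDecomposition} states that ``$G$ contains an undominating star attached to $U$'' and ``$G$ has a tame star-decomposition such that $U$ lies in the central part and every critical vertex set of $G$ lives in a leaf's part'' are complementary, which yields (i)$\,\Leftrightarrow\,$(iii). Hence (i), (ii) and (iii) are pairwise equivalent.

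For the ``moreover'' clause, assume that $U$ is normally spanned in $G$. The implication (i)$\,\Rightarrow\,$(iv) is immediate from Lemma~\ref{lem: list}(iv), which produces a locally finite normal tree $T\subset G$ containing $U$ cofinally, and in particular a locally finite normal tree containing $U$. For the converse I would argue directly rather than quote Lemma~\ref{lem: list}(iv), since the tree furnished by~(iv) need not contain $U$ cofinally. Given a locally finite normal tree $T\subset G$ with $U\subseteq V(T)$ and any finite vertex set $X\subset V(G)$, the forest $T-X$ has only finitely many components, because $T$ is locally finite and deleting a finite set from a tree splits it into finitely many pieces; since every component of $G-X$ that meets $U\subseteq V(T)$ must contain at least one component of $T-X$, only finitely many components of $G-X$ can meet $U$. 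Thus $U$ is tough in $G$, and Lemma~\ref{lem: undominating star and tough} delivers~(i). Alternatively, one could first pass to the restriction of $T$ to the down-closure of $U$, which by local finiteness of $T$ is a locally finite normal tree containing $U$ cofinally, and then apply Lemma~\ref{lem: list}(iv) verbatim.

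The only point that needs any care is the mismatch between ``contains $U$ cofinally'' in Lemma~\ref{lem: list}(iv) and the weaker ``contains $U$'' in assertion~(iv); once that is dealt with by the short toughness argument above, the proof is pure bookkeeping, so I anticipate no genuine obstacle.
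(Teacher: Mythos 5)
Your proposal is correct and matches the paper's intent exactly: this summary theorem is assembled by combining the complementarity statements of Theorem~\ref{thm: toughDual}, Theorem~\ref{StarDecomposition} and Lemma~\ref{lem: list}. Your extra care over the mismatch between ``contains $U$'' and ``contains $U$ cofinally'' is sound (a locally finite tree loses only finitely many components upon deleting a finite vertex set, so $U$ is tough and Lemma~\ref{lem: undominating star and tough} applies), and it is essentially the same toughness-witness argument the paper sketches in its introduction.
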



\bibliographystyle{amsplain}
\bibliography{StarCombBib}
\end{document}